\newtheorem{theorem}{Theorem}[section]
\newtheorem{proposition}[theorem]{Proposition}
\newtheorem{remark}{Remark}[section]
\newlength{\drop}
\definecolor{amethyst}{rgb}{0.6, 0.4, 0.8}
\definecolor{burgundy}{rgb}{0.5, 0.0, 0.13}
\title{\textbf{Thermal regulation in thin vascular systems: \\
A sensitivity analysis}}
\author{\textbf{K.~B.~Nakshatrala} 
and \textbf{K.~Adhikari} \\
  {\small Department of Civil and Environmental Engineering \\ 
  University of Houston, Houston, Texas 77204, USA.} \\
  {\small
  \textbf{Author to whom correspondence should be addressed:} knakshatrala@uh.edu}
  }
\keywords{sensitivity analysis; adjoint state method; thermal regulation; microvascular systems; active cooling; countercurrent heat exchange}
\begin{document}

\begin{titlepage}
  \drop=0.1\textheight
  \centering
  \vspace*{\baselineskip}
  \rule{\textwidth}{1.6pt}\vspace*{-\baselineskip}\vspace*{2pt}
  \rule{\textwidth}{0.4pt}\\[\baselineskip]
       {\Large \textbf{\color{burgundy}
       Thermal regulation in thin vascular systems:\\[0.3\baselineskip]
       A sensitivity analysis}}\\[0.3\baselineskip]
       \rule{\textwidth}{0.4pt}\vspace*{-\baselineskip}\vspace{3.2pt}
       \rule{\textwidth}{1.6pt}\\[0.2\baselineskip]
       \scshape
       An e-print of the paper is available on arXiv. \par
       \vspace*{0.2\baselineskip}
       Authored by \\[0.2\baselineskip]

  {\Large K.~B.~Nakshatrala\par}
  {\itshape Associate Professor, Department of Civil \& Environmental Engineering \\
  University of Houston, Houston, Texas 77204. \\
  \textbf{phone:} +1-713-743-4418, \textbf{e-mail:} knakshatrala@uh.edu \\
  \textbf{website:} http://www.cive.uh.edu/faculty/nakshatrala}\\[0.2\baselineskip]
  
  {\Large K.~Adhikari\par}
  {\itshape Graduate Student, Department of Civil \& Environmental Engineering \\
  University of Houston, Houston, Texas 77204.} \\[0.1\baselineskip]
  
  \begin{figure*}[ht]
    \centering
    \includegraphics[scale=0.9]{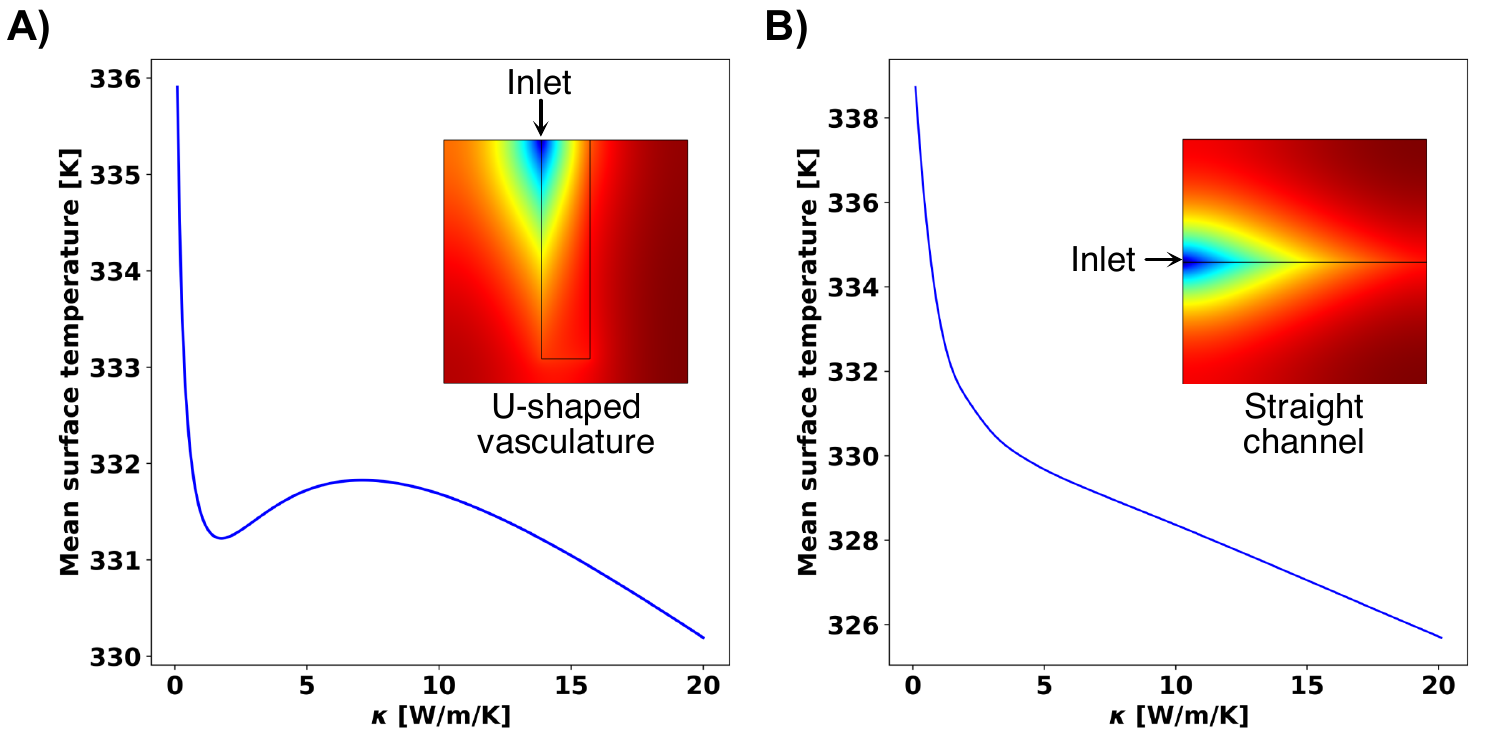}
    
  \emph{This graphical abstract highlights the chief finding of the sensitivity analysis undertaken in this paper, addressing active-cooling via fluid flow through an embedded microvasculature. The mean surface temperature (MST)---a popular performance metric---monotonically decreases with an increase in the fluid's heat capacity rate. However, MST variation can be non-monotonic to the change in the host material's thermal conductivity---a surprising result. A) When countercurrent heat exchange is significant (e.g., in the case of U-shaped vasculature), the sensitivity of MST to conductivity can be positive or negative. B) The remarked sensitivity decreases monotonically when countercurrent heat exchange is absent or insignificant (e.g., straight channel).}
  \end{figure*}
  
  \vfill
  {\scshape 2023} \\
  {\small Computational \& Applied Mechanics Laboratory} \par
\end{titlepage}

\begin{abstract}
One of the ways natural and synthetic systems regulate temperature is via circulating fluids through vasculatures embedded within their bodies. Because of the flexibility and availability of proven fabrication techniques, vascular-based thermal regulation is attractive for thin microvascular systems. Although preliminary designs and experiments demonstrate the feasibility of thermal modulation by pushing fluid through embedded micro-vasculatures, one has yet to optimize the performance before translating the concept into real-world applications. It will be beneficial to know how two vital design variables---host material’s thermal conductivity and fluid’s heat capacity rate---affect a thermal regulation system’s performance, quantified in terms of the mean surface temperature. This paper fills the remarked inadequacy by performing adjoint-based sensitivity analysis and unravels a surprising non-monotonic trend. Increasing thermal conductivity can either increase or decrease the mean surface temperature; the increase happens if countercurrent heat exchange---transfer of heat from one segment of the vasculature to another---is significant.
In contrast, increasing the heat capacity rate will invariably lower the mean surface temperature, for which we provide mathematical proof. The reported results (a) dispose of some misunderstandings in the literature, especially on the effect of the host material’s thermal conductivity, (b) reveal the role of countercurrent heat exchange in altering the effects of design variables, and (c) guide designers to realize efficient microvascular active-cooling systems. The analysis and findings will advance the field of thermal regulation both on theoretical and practical fronts.
\end{abstract}

\maketitle

\vspace{-0.3in}


\setcounter{figure}{0}


\section*{A LIST OF MATHEMATICAL SYMBOLS AND ABBREVIATIONS}

\begin{longtable}{ll}\hline
  \multicolumn{1}{c}{\textbf{Symbol}} & \multicolumn{1}{c}{\textbf{Definition}} \\
  \hline \multicolumn{2}{c}{\emph{Operators}} \\ \hline
  $\llbracket \cdot \rrbracket$ & jump operator across the vasculature $\Sigma$ \\
  $\llangle \cdot \rrangle$ & average operator across the vasculature $\Sigma$ \\
  $\mathrm{div}[\cdot]$ & spatial divergence operator \\
  $\mathrm{grad}[\cdot]$ & spatial gradient operator \\
  \hline \multicolumn{2}{c}{\emph{Geometry-related quantities}} \\ \hline
  $\Omega$ & domain (i.e., mid-surface of the body) \\
  $\partial \Omega$ & boundary of the domain  \\
  $\Gamma^{\vartheta}$ & part of the boundary with prescribed temperature \\
  $\Gamma^{q}$ & part of the boundary with prescribed heat flux \\
  $\Sigma$ & curve representing the vasculature \\
  $d$ & thickness of the body \\ 
  $\widehat{\mathbf{n}}(\mathbf{x})$ & unit outward normal vector to the boundary \\
  $\widehat{\mathbf{n}}^{\pm}(\mathbf{x})$ & unit outward normal vector on either sides of $\Sigma$ \\
  $s$ & normalized arc-length along $\Sigma$, measured from the inlet under forward flow \\ 
  $\widehat{\mathbf{t}}(\mathbf{x})$ & unit tangential vector along $\Sigma$ \\
  $\mathbf{x}$ & a spatial point \\
  \hline \multicolumn{2}{c}{\emph{Solution fields}} \\ \hline
  $\vartheta(\mathbf{x})$ & temperature (scalar) field \\
  $\vartheta^{(f)}(\mathbf{x})$ & temperature field under forward flow conditions \\
  $\vartheta^{(r)}(\mathbf{x})$ & temperature field under reverse flow conditions \\
  $\vartheta_{\mathrm{outlet}}$ & outlet temperature \\
  $\mathbf{q}(\mathbf{x})$ & heat flux vector field \\
  \hline \multicolumn{2}{c}{\emph{Prescribed quantities}} \\ \hline
  $\vartheta_{\mathrm{amb}}$ & ambient temperature \\ 
  $\vartheta_{\mathrm{inlet}}$ & inlet temperature \\
  $f(\mathbf{x})$ & power supplied by heat source \\
  $f_0$ & a constant power by heat source \\ 
  $Q$ & volumetric flow rate in the vasculature \\
  \hline \multicolumn{2}{c}{\emph{Material and surface properties}} \\ \hline
  $\kappa $ & host material's thermal conductivity \\
  $\rho_f$ & fluid's density \\
  $c_f$ & fluid's specific heat capacity \\
  $h_{T}$ & (combined) heat transfer coefficient \\
  \hline \multicolumn{2}{c}{\emph{Other symbols}} \\ \hline
  $\eta$ & thermal efficiency \\
  $\overline{\vartheta}$ & spatial mean of the temperature field (i.e., mean surface temperature) \\
  $\overline{\vartheta}_{\mathrm{HSS}}$ & spatial mean of hot steady-state temperature \\
  $\dot{m}$ & mass flow rate in the vasculature, $\dot{m} = \rho_f \, Q$ \\
  $\chi$ & heat capacity rate of the fluid, $\chi = \dot{m} \, c_f$ \\
  $\xi(\mathbf{x})$ & an arbitrary design field variable \\ 
  $\Phi[\cdot]$ & objective functional \\
  $D\Phi[\xi(\mathbf{x})]$ & Fr\'echet derivative of functional $\Phi$ with respect to $\xi(\mathbf{x})$ \\ 
  \hline \multicolumn{2}{c}{\emph{Abbreviations}} \\ \hline
  \textsf{HSS} & hot steady-state \\ 
  \textsf{MST} & mean surface temperature \\
  \textsf{QoI} & quantity of interest \\
  \hline
\end{longtable}

\setcounter{table}{0}

\section{INTRODUCTION AND MOTIVATION}
\label{Sec:S1_Sensitivity_Intro}

Moving fluids through embedded vasculatures offers environmental-friendly solutions to many thermal regulation applications. For example, a geothermal system---a renewable energy source---comprises a network of pipes in the ground and thrusts a ct ofluid (typically water) to extract thermal energy from the subsurface to heat homes and appliances \citep{barbier2002geothermal}. Another application, which is becoming popular, is using vascular-based thermal modulation to de-ice grounded aircraft instead of toxic anti-freeze chemicals, which pollute groundwater and soils if not correctly handled \citep{murray1997toxicological}. In separate developments, researchers avail fluid flow in embedded vesicles for controlling temperature fields to achieve other vital functionalities in synthetic composites, such as electromagnetic modulation \citep{devi2021microvascular}, \emph{in situ} self-healing \citep{Snyder_self_healing_2022}, and reduced thermal stresses \citep{ccetkin2015vascularization}. Given these and many other potential applications, recent progress across several scientific fields provides perfect opportunities to spur the growth of thermal regulation in \emph{microvascular} composite and metal systems. These fields include \emph{experimental heat transfer} (e.g., optical imaging \citep{mayinger2013optical}), \emph{fabrication techniques} (e.g., 3D printing \citep{nguyen2018recent}, frontal polymerization \citep{robertson2018rapid}), \emph{modeling methods} (e.g., reduced-order models \citep{tan20173d,nakshatrala2022modeling}), \emph{numerical formulations} (e.g., stabilized formulations for convection-dominated problems \citep{hughes1995multiscale,masud2004multiscale,franca1992stabilized,franca2006revisiting,codina2000stabilized,hsu2010improving,turner2011stabilized}), and \emph{design approaches} (e.g., topology optimization \citep{alexandersen2014topology,dbouk2017review,ahmed2018optimization,pejman2019gradient}). Nevertheless, two aspects need a thorough examination in perfecting such microvascular systems. 

\emph{First}, one must identify suitable quantities of interest (QoIs) that can assess the system's performance; for example, thermal efficiency is a popular performance metric in heat transfer and thermodynamics studies. In mathematical optimization jargon, an objective function is a popular alternative to QoI; herein, we use these two terms interchangeably. Upon a selection, a designer's goal would be to ``extremize" (maximize or minimize, depending on the choice) the chosen objective function. Selecting appropriate objective functions for thermal regulation---from either a design perspective or computational appeal---is still an unsettled question, certainly warranting an in-depth study; nonetheless, such an investigation is beyond this paper's scope. That said, studies involving thin members often aim to minimize the mean surface temperature (MST) \citep{pejman2019gradient}. Also, as this paper shows, minimizing MST is equivalent to maximizing cooling efficiency. Because of these two reasons, we take MST as the quantity of interest and defer exploring alternatives to a follow-up article. But a natural question arises: 
\begin{enumerate}[(Q1)]
    \item What are the ramifications of \emph{minimizing the mean surface temperature} on other thermal characteristics? Said differently, what equivalent changes does it bring to the system?
\end{enumerate}

\emph{Second}, one needs to identify an appropriate set of design variables: geometrical, material, and input parameters that a designer can vary to alter the system’s performance. Several studies have explored geometrical attributes, such as altering vasculature layout (e.g., spiral) and changing the spacing among branches \citep{bejan1995thermal,aragon2008design,aragon2011multi,tan2015nurbs,mcelroy2015optimisation,pejman2019gradient}. However, prior studies have not adequately investigated the effects of the host material’s thermal conductivity and the fluid’s heat capacity rate (product of fluid's heat capacity, a material parameter, and volumetric flow rate, an input parameter). Further, a diligent look at the literature reveals an unresolved issue: studies have indicated that the host material’s conductivity does not significantly affect the mean surface temperature or thermal efficiency (e.g., see \citep{devi2021microvascular}). But a simple thought experiment conjectures the possibility of the opposite: A higher conductive host material will offer lower resistance to the heat flowing towards the sink---the coolant flowing in the vasculature. Thus, the flowing fluid will take out a higher percentage of supplied heat to the system---meaning a higher thermal efficiency and a lower MST. Ergo, there is a clear-cut dichotomy. Indeed, as we will show later in this paper, the situation is more intriguing than the discussion heretofore. Thus, a related key question is:
\begin{enumerate}[(Q2)]
\item How does altering the design variables---the fluid's heat capacity rate and the host material's thermal capacity---vary the mean surface temperature? 
\end{enumerate}

This paper comprehensively addresses the above-outlined two questions: We will use integral theorems to answer the first question, while a mathematical sensitivity analysis alongside finite element simulations will address the second. The workhorse for both integral theorems and sensitivity analysis is a reduced-order mathematical model for which \citet{nakshatrala2022modeling} has recently provided a theoretical underpinning. In addition, we use the adjoint state method for the sensitivity analysis to assess how the design variables affect the quantity of interest. The adjoint state method---a powerful technique to calculate design sensitivities---is widely used in optimal control theory \citep{lions1971optimal}, inverse problems \citep{givoli2021tutorial}, PDE-constrained optimization \citep{bradley2013pde}, tomography problems \citep{tromp2005seismic}, shape and topology optimization \citep{jameson2003aerodynamic,bendsoe_sigmund_2013topology}, material design \citep{nakshatrala2013nonlinear,nakshatrala2022pose}, and dynamic check-pointing \citep{wang2009minimal}, to name a few. The chief advantage of the adjoint state method is that it circumvents an explicit calculation of the solution field's sensitivity to a design variable. This circumvention is attractive to this study, as our primary intention is to assess the sign of the sensitivity (positive or negative): whether the design variable promotes or hinders the QoI. The adjoint state method allows us to do so without explicitly calculating the solution field. 

The results presented in this paper provide a deeper understanding of active cooling and pave a systematic path for a mathematical-driven material design of thermal regulation systems. The plan for the rest of this article is as follows. We first present the governing equations for the direct problem: a reduced-order mathematical model describing vascular-based thermal regulation (\S\ref{Sec:S2_Sensitivity_Forward}). Using this model, we then deduce the ramifications of minimizing the mean surface temperature---a popular quantity of interest---on thermal efficiency and other thermal attributes of the system (\S\ref{Sec:S3_Sensitivity_Ramifications}). Next, using the adjoint state method, we calculate the sensitivity of the mean surface temperature to the fluid's heat capacity rate (\S\ref{Sec:S4_Sensitivity_Heat_capacity_rate}). Following a similar procedure, we estimate the sensitivity of the same quantity of interest to the thermal conductivity of the host material (\S\ref{Sec:S5_Sensitivity_Conductivity}). After that, we verify the newfound theoretical results using numerical simulations (\S\ref{Sec:S6_Sensitivity_NR}). Finally, we draw concluding remarks and put forth potential future works (\S\ref{Sec:S7_Sensitivity_Closure}).

\section{MATHEMATICAL DESCRIPTION OF THE DIRECT PROBLEM}
\label{Sec:S2_Sensitivity_Forward}
\textbf{Figure~\ref{Fig:Sensitivity_Pictorial_Setup}} depicts a typical active-cooling setup: Consider a thin body whose thickness is much smaller than its other characteristic dimensions. The body contains a connected vasculature, with inlet and outlet openings on the lateral surface, which is otherwise insulated. A heat source supplies power to one of the transverse faces while the other is free to convect and radiate. A liquid flows through the vasculature, enabling active-cooling. We assume radiation is relatively minor, and, as commonly done, we lump its contribution to the convective component using the combined/overall heat transfer coefficient \citep{kaminski2017introduction}. The lumping makes the resulting mathematical model linear, making the sensitivity analysis analytically tractable. Since the body is thin, a full three-dimensional model is inordinate. So, this paper avails a reduced-order model defined in a two-dimensional domain---the mid-surface of the slender body. Accordingly, we model the vasculature as a curve within this domain rather than resolving its cross-sectional area. 

For mathematical description, $\Omega \subset \mathbb{R}^{2}$ denotes the domain, $\partial \Omega$ the boundary, and $d$ the body's thickness. For technical purposes, we assume $\Omega$ to be open and bounded and $\partial \Omega$ piece-wise smooth. A spatial point is denoted by $\mathbf{x} \in \Omega \cup \partial \Omega$ and the outward unit normal vector to the boundary by $\widehat{\mathbf{n}}(\mathbf{x})$. The spatial divergence and gradient operators are denoted by $\mathrm{div}[\cdot]$ and $\mathrm{grad}[\cdot]$, respectively. We denote the temperature field in the domain by $\vartheta(\mathbf{x})$ and the heat flux vector field by $\mathbf{q}(\mathbf{x})$. $\vartheta_{\mathrm{amb}}$ denotes the ambient temperature---the surrounding environment's temperature. The boundary is divided into two complementary parts: $\Gamma^{\vartheta} \cup \Gamma^{q} = \partial \Omega$. $\Gamma^{\vartheta}$ is the part of the boundary on which temperature (i.e., Dirichlet boundary condition) is prescribed, while $\Gamma^{q}$ is that part of the boundary on which heat flux (i.e., Neumann boundary condition) is prescribed. For mathematical well-posedness, we require $\Gamma^{\vartheta} \cap \Gamma^{q} = \emptyset$.

\begin{figure}[ht]
    \centering
    \includegraphics[scale=0.8]{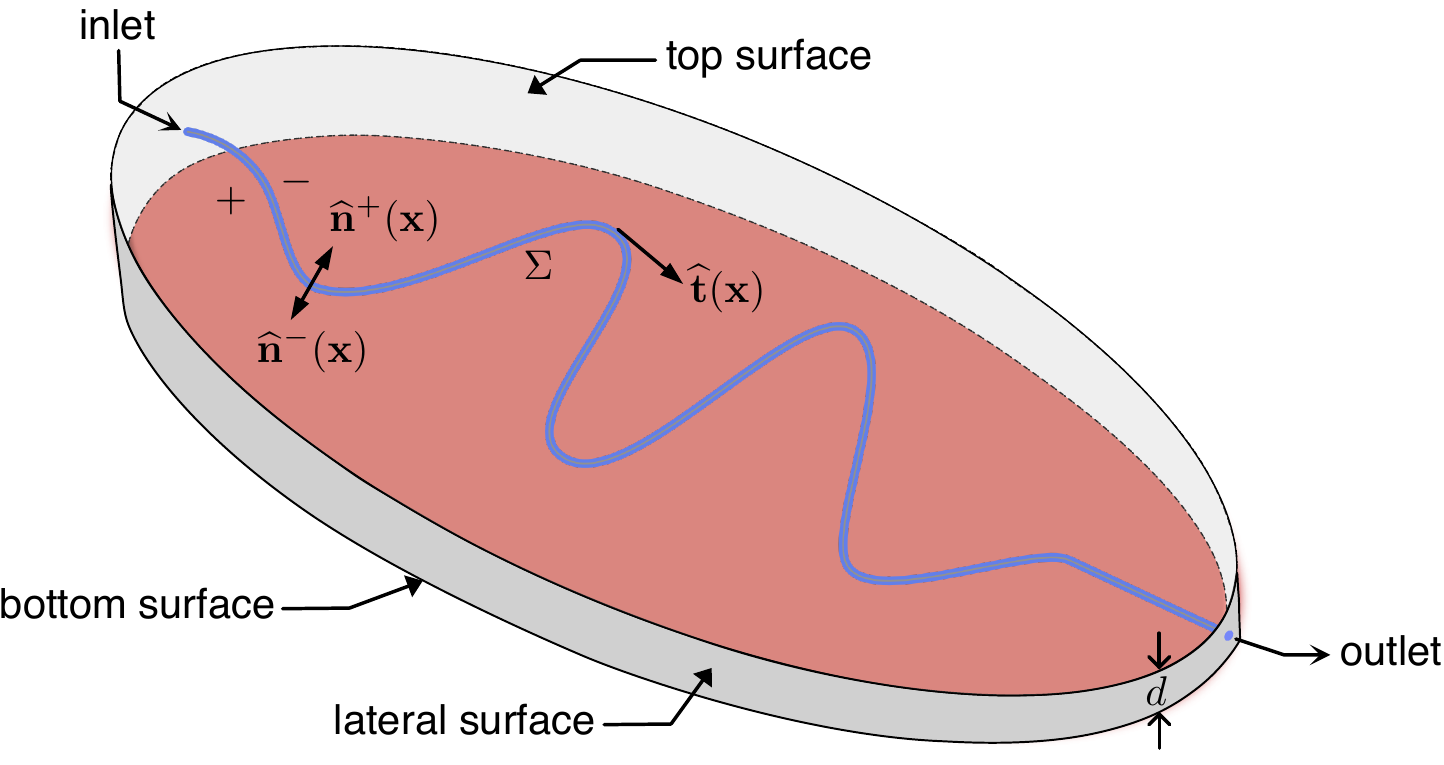}
    \caption{This figure shows a typical thin vascular-based active-cooling setup. The domain with thickness $d$ contains a vasculature $\Sigma$ through which a fluid (coolant) flows. A heat source is at the bottom transverse surface while the top is free to convect and radiate (i.e., exposed to the environment). The lateral surface is adiabatic.  $\widehat{\mathbf{n}}^{\pm}(\mathbf{x})$ denote the unit normals on either side of $\Sigma$. The unit tangent vector along the vasculature---starting from the inlet to outlet---is denoted by $\widehat{\mathbf{t}}(\mathbf{x})$.}
    \label{Fig:Sensitivity_Pictorial_Setup}
\end{figure}

Fluid passage through the vasculature enables active-cooling: heat transfers between the host solid and flowing fluid. $\rho_f$ and $c_f$ denote the fluid's density and specific heat capacity, respectively. $Q$ represents the volumetric flow rate within the vasculature. Thus, the mass flow rate reads: 
\begin{align}
    \label{Eqn:Sensitivity_mass_flow_rate}
    \dot{m} = \rho_{f} \, Q 
\end{align}
The heat capacity rate---one of the design variables considered in this paper---is then defined as:
\begin{align}
     \label{Eqn:Sensitivity_heat_capacity_rate}
    \chi = \dot{m} \, c_f 
\end{align}
Note that $\chi$ is a product of fluid properties ($\rho_f$ and $c_f$) and an input parameter ($Q$). 

The curve representing the vasculature is denoted by $\Sigma$ and is parameterized using the normalized arc-length $s$, with $s = 0$ at the inlet and $s = 1$ at the outlet. The unit tangent vector in the direction of increasing arc-length at a spatial point on this curve is denoted by $\widehat{\mathbf{t}}(\mathbf{x})$. We denote the inlet and outlet temperatures by $\vartheta_{\mathrm{inlet}}$ and $\vartheta_{\mathrm{outlet}}$, respectively. $\vartheta_{\mathrm{inlet}}$ is a prescribed input, while $\vartheta_{\mathrm{outlet}}$ is a part of the solution---generally unknown without solving the boundary value problem. Throughout this paper, we assume $\vartheta_{\mathrm{inlet}} = \vartheta_{\mathrm{amb}}$. 

\subsection{Average and jump operators}
The fluid flow within the vasculature creates a jump in the heat flux across the curve $\Sigma$. On this account, we introduce the necessary notation to describe the remarked jumps mathematically. We denote the outward unit normals on either side of $\Sigma$ by $\widehat{\mathbf{n}}^{+}(\mathbf{x})$ and $\widehat{\mathbf{n}}^{-}(\mathbf{x})$ (see Fig.~\ref{Fig:Sensitivity_Pictorial_Setup}). Assigning the signs to these normals can be arbitrary. For instance, label one of the outward normals as positive; the other---pointing in the opposite direction---will be negative. These two normals satisfy:
\begin{align}
    \widehat{\mathbf{n}}^{+}(\mathbf{x}) +     \widehat{\mathbf{n}}^{-}(\mathbf{x}) = \mathbf{0}  
    \quad \mathrm{and} \quad
    \widehat{\mathbf{n}}^{\pm}(\mathbf{x}) \bullet \widehat{\mathbf{t}}(\mathbf{x}) = 0 
    \qquad \forall \mathbf{x} \in \Sigma
\end{align}
where $\bullet$ denotes the (Euclidean) dot product. 

Given a field $\gamma(\mathbf{x})$, we denote the \emph{limiting values} on either side of $\Sigma$ by $\gamma^{+}(\mathbf{x})$ and $\gamma^{-}(\mathbf{x})$. Mathematically, 
\begin{align}
    \label{Eqn:Sensitivity_limiting_values}
    \gamma^{+}(\mathbf{x}) := \lim_{\epsilon \rightarrow 0} \; \gamma(\mathbf{x} - |\epsilon| \, \mathbf{n}^{+}(\mathbf{x})) 
    \quad \mathrm{and} \quad 
    \gamma^{-}(\mathbf{x}) := \lim_{\epsilon \rightarrow 0} \; \gamma(\mathbf{x} - |\epsilon| \, \mathbf{n}^{-}(\mathbf{x})) 
\end{align}
We then define the \emph{average operator} across $\Sigma$ for a scalar field $\alpha(\mathbf{x})$ and a vector field $\mathbf{a}(\mathbf{x})$ as follows: 
\begin{subequations}
    \label{Eqn:Sensitivity_average_operator} 
    \begin{align}
        \label{Eqn:Sensitivity_avg_operator_scalar} 
        \llangle \alpha(\mathbf{x}) \rrangle 
        &= \frac{1}{2} \Big(\alpha^{+}(\mathbf{x}) + \alpha^{-}(\mathbf{x}) \Big) \\
        \label{Eqn:Sensitivity_avg_operator_vector} 
        \llangle \mathbf{a}(\mathbf{x}) \rrangle 
        &= \frac{1}{2} \Big(\mathbf{a}^{+}(\mathbf{x}) + \mathbf{a}^{-}(\mathbf{x}) \Big) 
    \end{align}
\end{subequations}
For these two fields, the \emph{jump operator} across $\Sigma$ is defined as follows: 
\begin{subequations} 
    \label{Eqn:Sensitivity_jump_operator} 
    \begin{align}
        \label{Eqn:Sensitivity_jump_operator_scalar} 
        &\llbracket \alpha(\mathbf{x}) \rrbracket 
        = \alpha^{+}(\mathbf{x}) \, 
        \widehat{\mathbf{n}}^{+}(\mathbf{x}) 
        + \alpha^{-}(\mathbf{x}) \, 
        \widehat{\mathbf{n}}^{-}(\mathbf{x}) \\
        \label{Eqn:Sensitivity_jump_operator_vector} 
        &\llbracket\mathbf{a}(\mathbf{x}) \rrbracket = \mathbf{a}^{+}(\mathbf{x})
        \bullet \widehat{\mathbf{n}}^{+}(\mathbf{x}) 
        + \mathbf{a}^{-}(\mathbf{x}) 
        \bullet \widehat{\mathbf{n}}^{-}(\mathbf{x}) 
    \end{align}
\end{subequations}
Note that the jump operator acts on a scalar field to produce a vector field and \emph{vice versa}.\footnote{There is an alternative definition used in the literature for the jump operator: $\llbracket a(\mathbf{x}) \rrbracket = a^{+}(\mathbf{x}) - a^{-}(\mathbf{x})$. For example, see \citep{chadwick2012continuum}. The definition used in this paper (i.e., Eq.~\eqref{Eqn:Sensitivity_jump_operator}) is symmetric with respect to $+$ and $-$ sub-domains and is more convenient for writing Green's theorem and executing other calculations.} For scalar fields $\alpha(\mathbf{x})$ and $\beta(\mathbf{x})$ and vector fields $\mathbf{a}(\mathbf{x})$ and $\mathbf{b}(\mathbf{x})$, the following identities (see Appendix for a derivation):
\begin{subequations}
    \label{Eqn:Sensitivities_avg_jump_identities}
    \begin{align}
         \label{Eqn:Sensitivities_avg_jump_identities_a}
        &\llbracket \alpha(\mathbf{x}) \,  \mathbf{a}(\mathbf{x}) \rrbracket 
        = \llbracket \alpha(\mathbf{x}) \rrbracket \bullet
        \llangle \mathbf{a}(\mathbf{x}) \rrangle
        + \llangle \alpha(\mathbf{x}) \rrangle \, 
        \llbracket \mathbf{a}(\mathbf{x}) \rrbracket \\
         \label{Eqn:Sensitivities_avg_jump_identities_b}
        &\llbracket \mathbf{a}(\mathbf{x}) \bullet  \mathbf{b}(\mathbf{x}) \rrbracket 
        = \llbracket \mathbf{a}(\mathbf{x}) \rrbracket 
        \llangle \mathbf{b}(\mathbf{x}) \rrangle
        + \llangle \mathbf{a}(\mathbf{x}) \rrangle \, 
        \llbracket \mathbf{b}(\mathbf{x}) \rrbracket 
    \end{align}
\end{subequations}
These identities will be used later in sensitivity analysis to derive the adjoint state problem. 

\subsection{Reduced-order mathematical model}
The model considers three modes of heat transfer. Newton's law of cooling accounts for the convection on a free surface. A jump condition---an energy balance across the vasculature---models the heat transfer between the circulating fluid and the host solid. The Fourier model describes the conduction within the host solid: 
\begin{align}
    \label{Eqn:Sensitivity_Fourier_model}
    \mathbf{q}(\mathbf{x}) = - \kappa(\mathbf{x}) \,  
    \mathrm{grad}[\vartheta(\mathbf{x})]
\end{align}
where $\kappa(\mathbf{x})$ is the coefficient of thermal conductivity.

The governing equations for the reduced-order model, describing thermal regulation, are 
\begin{subequations}
\begin{alignat}{2}
    \label{Eqn:Sensitivity_BoE} 
    -&d \, \mathrm{div}[\kappa(\mathbf{x}) \, 
    \mathrm{grad}[\vartheta(\mathbf{x})]] 
    = f(\mathbf{x})
    - h_{T} \, (\vartheta(\mathbf{x}) - \vartheta_{\mathrm{amb}}) 
    && \quad \mathrm{in} \; \Omega \setminus \Sigma \\
    \label{Eqn:Sensitivity_q_jump_condition} 
    -&d \, \llbracket\kappa(\mathbf{x}) \,
    \mathrm{grad}[\vartheta(\mathbf{x})]\rrbracket 
    = \chi \,  \llangle \mathrm{grad}[\vartheta(\mathbf{x})] 
    \bullet \widehat{\mathbf{t}}(\mathbf{x}) \rrangle
    && \quad \mathrm{on} \; \Sigma \\
    \label{Eqn:Sensitivity_temp_jump_condition} 
    &\llbracket\vartheta(\mathbf{x})\rrbracket = 0  
    && \quad \mathrm{on} \; \Sigma \\
    \label{Eqn:Sensitivity_q_BC} 
    -&d \, \widehat{\mathbf{n}}(\mathbf{x}) \bullet \kappa(\mathbf{x}) \mathrm{grad}[\vartheta(\mathbf{x})] 
    = 0 
    && \quad \mathrm{on} \; \partial \Omega \\
    \label{Eqn:Sensitivity_inlet_BC} 
    &\llangle \vartheta(\mathbf{x})\rrangle = \vartheta_{\mathrm{inlet}} 
    = \vartheta_{\mathrm{amb}} 
    && \quad \mathrm{at} \; s = 0 \; \mathrm{on} \; \Sigma 
\end{alignat}
\end{subequations}
where $h_T$ is the combined heat transfer coefficient, and $f(\mathbf{x})$ is the power supplied by the heat source. For underlying assumptions and a thorough mathematical analysis of this reduced-order model, see \citep{nakshatrala2022modeling}. Reduced-order models, similar to the one presented above, have been employed in prior active-cooling studies: for example, to establish invariants under flow reversal \citep{nakshatrala2022invariance}, to get optimized vascular layouts using topology or shape optimization---or both \citep{pejman2019gradient,najafi2015gradient,tan20173d}, to develop analysis numerical frameworks \citep{tan2015nurbs}, to name a few. However, these studies' focus has been different and not towards addressing the two questions central to this paper, outlined in the introduction. In the rest of the article, we will use mathematical analysis and sensitivity analysis on the reduced-order model to answer these questions. 

In the parlance of sensitivity analysis and inverse problems, the above boundary value problem is often referred to as the direct problem.\footnote{In the literature, another popular name for the direct problem is the forward problem. As we will show later, the sensitivity analysis (under the adjoint state method) avails boundary value problems under \emph{forward} and reverse flow conditions (i.e., swapping the inlet and outlet). To avoid a potential confusion, we do not adopt the forward problem terminology---but use the direct problem instead.} Two remarks are warranted to relate the form in which the governing equations are presented above to those used by other works. 

\begin{remark}
\label{Remark:Sensitivity_Tangential_component}
In Eq.~\eqref{Eqn:Sensitivity_q_jump_condition}, $\mathrm{grad}[\vartheta]\bullet \widehat{\mathbf{t}}(\mathbf{x})$ represents the tangential derivative of the temperature field along the vasculature. Equation \eqref{Eqn:Sensitivity_temp_jump_condition} implies that the temperature is continuous across $\Sigma$. Since the unit tangent vector $\widehat{\mathbf{t}}(\mathbf{x})$ remains the same on either side of the vasculature, the tangential derivative $\mathrm{grad}[\vartheta]\bullet \widehat{\mathbf{t}}(\mathbf{x})$ will also be continuous across $\Sigma$. So, the average operator can be dropped on the right side of Eq.~\eqref{Eqn:Sensitivity_q_jump_condition}; one can alternatively write it as 
\begin{align}
    -d \, \llbracket\kappa(\mathbf{x}) \mathrm{grad}[\vartheta]\rrbracket 
    = \chi \,  \mathrm{grad}[\vartheta] \bullet \widehat{\mathbf{t}}(\mathbf{x}) 
    \quad \mathrm{on} \; \Sigma 
\end{align}
\cite{nakshatrala2022modeling} has utilized the above alternative equation in their mathematical analysis to account for the heat transfer across the vasculature. But, for carrying out the sensitivity analysis using the adjoint method, Eq.~\eqref{Eqn:Sensitivity_q_jump_condition} is better suited than the above alternative. 
\end{remark}

\begin{remark}
\label{Remark:Sensitivity_Inlet_condition}
Equation \eqref{Eqn:Sensitivity_inlet_BC} assigns the prescribed inlet temperature to $\llangle \vartheta(\mathbf{x}) \rrangle$ at $s = 0$ on $\Sigma$; that is, the assignment is to the average of the temperatures on the either sides of the  vasculature at the inlet. Because of the continuity of the temperature field across the vasculature (i.e., Eq.~\eqref{Eqn:Sensitivity_temp_jump_condition}), the average operator can be dropped in writing the initial condition: 
\begin{align}
    \vartheta(\mathbf{x}) 
    = \vartheta_{\mathrm{inlet}} = \vartheta_{\mathrm{amb}}  
    \quad \mathrm{at} \; s = 0 \; \mathrm{on} \; \Sigma 
\end{align}
Some works prefer the above equation, for example, see \citep{nakshatrala2022modeling}. However, for our paper, Eq.~\eqref{Eqn:Sensitivity_inlet_BC} is a better (equivalent) alternative, as it will simplify the derivations in the sensitivity analysis. 

Also, since $\vartheta_{\mathrm{inlet}}$ and $\vartheta_{\mathrm{amb}}$ are constants and do not experience jumps across the vasculature, we rewrite Eq.~\eqref{Eqn:Sensitivity_inlet_BC}, in derivations in this paper, as: 
\begin{align}
    \llangle \vartheta(\mathbf{x}) 
    - \vartheta_{\mathrm{inlet}} 
    \rrangle 
    = \llangle \vartheta(\mathbf{x}) 
    - \vartheta_{\mathrm{amb}} 
    \rrangle 
    = 0  
    \quad \mathrm{at} \; s = 0 \; \mathrm{on} \; \Sigma 
\end{align}
\end{remark}

\subsection{Useful definitions}
The \emph{mean surface temperature} is defined as 
\begin{align}
    \label{Eqn:ROM_Sensitivity_MST}
    \overline{\vartheta} 
    := \frac{1}{\mathrm{meas}(\Omega)}\int_{\Omega} \vartheta(\mathbf{x}) \, \mathrm{d} \Omega 
\end{align}
where $\mathrm{meas}(\Omega)$ denotes the (set) measure of $\Omega$. Since $\Omega$ is a surface, $\mathrm{meas}(\Omega)$ is the area of $\Omega$. The \emph{thermal efficiency}---referred to as the \emph{cooling efficiency} in the context of active-cooling---is defined as the ratio of the rate at which heat is carried away by the flowing fluid (within the vasculature) to the total power supplied by the heater. Mathematically, 
\begin{align}
    \label{Eqn:Sensitivity_eta_definition_general}
    \eta 
    := \left( \int_{\Omega} f(\mathbf{x}) \, \mathrm{d} \Omega \right)^{-1} \chi \, (\vartheta_{\mathrm{outlet}} - \vartheta_{\mathrm{inlet}})
\end{align}
If the applied heat supply is uniform over the entire domain (i.e., $f(\mathbf{x}) = f_0$), then we have:
\begin{align}
    \label{Eqn:Sensitivity_eta_definition_f0}
    \eta = \frac{\chi \, (\vartheta_{\mathrm{outlet}} - \vartheta_{\mathrm{inlet}})}{f_0 \, \mathrm{meas}(\Omega)}
\end{align}

We refer to the steady-state achieved without active-cooling as the \emph{hot steady-state} (HSS) and denote the corresponding temperature field as $\vartheta_{\mathrm{HSS}}(\mathbf{x})$. That is, HSS occurs when $\chi = 0$ and the temperature at the inlet is not prescribed. With this definition and by availing Eqs.~\eqref{Eqn:Sensitivity_BoE}--\eqref{Eqn:Sensitivity_inlet_BC}, the \emph{mean hot steady-state temperature} can be written as 
\begin{align}
    \label{Eqn:Sensitivity_Theta_HSS_definition}
    \overline{\vartheta}_{\mathrm{HSS}} 
    := \frac{1}{\mathrm{meas}(\Omega)} \int_{\Omega} \vartheta_{\mathrm{HSS}}(\mathbf{x}) \, \mathrm{d} \Omega 
    = \vartheta_{\mathrm{amb}} + \frac{1}{h_T \, \mathrm{meas}(\Omega)} \int_{\Omega} f(\mathbf{x}) \, \mathrm{d} \Omega 
\end{align}
If $f(\mathbf{x}) = f_0$ (i.e., uniform power supplied by the heater), we have: 
\begin{align}
    \overline{\vartheta}_{\mathrm{HSS}} 
    = \vartheta_{\mathrm{amb}} + \frac{f_0}{h_T}
\end{align}
As revealed by the above equation, for the chosen thermal regulation setup (see Fig.~\ref{Fig:Sensitivity_Pictorial_Setup}),  $\overline{\vartheta}_{\mathrm{HSS}}$ is independent of the host material's conductivity.

\subsection{Forward and reverse flows}
\label{Subsec:Sensitivity_forward_reverse_flows}
Shown later, the sensitivity analysis avails a boundary value problem under reverse flow conditions: that is, the inlet and outlet locations are swapped, and hence, the fluid flows in the opposite direction within the vasculature. The solution under forward flow conditions will be denoted by $\vartheta^{(f)}(\mathbf{x})$, which satisfies Eqs.~\eqref{Eqn:Sensitivity_BoE}--\eqref{Eqn:Sensitivity_inlet_BC}. While $\vartheta^{(r)}(\mathbf{x})$ denotes the solution under reverse flow conditions, satisfying the following boundary value problem:  
\begin{subequations}
    \begin{alignat}{2}
        \label{Eqn:Sensitivity_BoE_Reverse} 
        -&d \, \mathrm{div}[\kappa(\mathbf{x}) \, 
        \mathrm{grad}[\vartheta^{(r)}(\mathbf{x})]] 
        = f(\mathbf{x}) 
        - h_{T} \, (\vartheta^{(r)}(\mathbf{x}) - \vartheta_{\mathrm{amb}}) 
        && \quad \mathrm{in} \; \Omega \setminus \Sigma \\
        \label{Eqn:Sensitivity_q_jump_condition_Reverse} 
        -&d \, \llbracket\kappa(\mathbf{x}) \, \mathrm{grad}[\vartheta^{(r)}(\mathbf{x})]\rrbracket 
        = -\chi \,  \llangle \mathrm{grad}[\vartheta^{(r)}(\mathbf{x})] \bullet \widehat{\mathbf{t}}(\mathbf{x}) \rrangle
        && \quad \mathrm{on} \; \Sigma \\
        \label{Eqn:Sensitivity_temp_jump_condition_Reverse} 
        &\llbracket\vartheta^{(r)}(\mathbf{x})\rrbracket = 0  
        && \quad \mathrm{on} \; \Sigma \\
        \label{Eqn:Sensitivity_q_BC_Reverse} 
        -&d \, \widehat{\mathbf{n}}(\mathbf{x}) \bullet \kappa(\mathbf{x}) \, \mathrm{grad}[\vartheta^{(r)}(\mathbf{x})] = 0 
        && \quad \mathrm{on} \; \partial \Omega \\
        \label{Eqn:Sensitivity_inlet_BC_Reverse} 
        &\llangle\vartheta^{(r)}(\mathbf{x})\rrangle = \vartheta_{\mathrm{inlet}} 
        = \vartheta_{\mathrm{amb}} 
        && \quad \mathrm{at} \, s = 1 \; \mathrm{on} \; \Sigma 
    \end{alignat}
\end{subequations}

Note that we have employed the same orientation for the arc-length (as the one used under the forward flow) in writing the governing equations under the reverse flow. Namely, the inlet location corresponds to $s = 0$ under the forward flow while to $s = 1$ under the reverse flow. Also, note the two main differences between the boundary value problems under forward and reverse flow conditions: the sign change on the right side of Eq.~\eqref{Eqn:Sensitivity_q_jump_condition_Reverse} (cf. Eq.~\eqref{Eqn:Sensitivity_q_jump_condition}), and $s = 1$ in Eq.~\eqref{Eqn:Sensitivity_inlet_BC_Reverse} (instead of $s = 0$ in Eq.~\eqref{Eqn:Sensitivity_inlet_BC}). 

We will refer to the boundary value problems under the forward and reverse flow conditions as the \emph{forward} and \emph{reverse flow problems}, respectively. In general, $\vartheta^{(f)}(\mathbf{x})$ differs from $\vartheta^{(r)}(\mathbf{x})$. However, as shown recently by \cite{nakshatrala2022invariance}, the mean surface temperature remains invariant under a reversal of flow when the applied heat source is uniform (i.e., $f(\mathbf{x}) = f_0$). Stated mathematically, 
\begin{align}
    \label{Eqn:Sensitivity_invariance_MST}
    &\frac{1}{\mathrm{meas}(\Omega)} \int_{\Omega} \vartheta^{(f)}(\mathbf{x}) \, \mathrm{d} \Omega 
    \; \equiv \; 
    \overline{\vartheta}^{(f)}
    = \overline{\vartheta}^{(r)} 
    \; \equiv \; 
    \frac{1}{\mathrm{meas}(\Omega)} \int_{\Omega} \vartheta^{(r)}(\mathbf{x}) \, \mathrm{d} \Omega 
\end{align}
This invariance property will be used later in the mathematical analysis of design sensitivities. 

\section{RAMIFICATIONS OF MINIMIZING MEAN SURFACE TEMPERATURE}
\label{Sec:S3_Sensitivity_Ramifications}

This section addresses the first question outlined in the introduction. We show that minimizing the mean surface temperature is equivalent to: 
\begin{enumerate}
    \item maximizing the difference between the outlet and inlet temperatures,  
    \item maximizing the outlet temperature, 
    \item maximizing the thermal (cooling) efficiency, 
    \item maximizing the difference between $\overline{\vartheta}_{\mathrm{HSS}} - \overline{\vartheta}$, and 
    \item minimizing the difference between $\overline{\vartheta} - \vartheta_{\mathrm{amb}}$ if applied heater power is uniform (i.e., $f(\mathbf{x}) = f_0$).
\end{enumerate}
Except for the last equivalence, the remaining ones hold for a general power source $f(\mathbf{x}) \geq 0$.

To establish the \emph{first} equivalence, we integrate Eq.~\eqref{Eqn:Sensitivity_BoE} over the domain, apply the divergence theorem \eqref{Eqn:Sensitivity_Divergence_theorem}, and use the rest of equations under the direct problem \eqref{Eqn:Sensitivity_q_jump_condition}--\eqref{Eqn:Sensitivity_inlet_BC} to get: 
\begin{align}
    \label{Eqn:Sensitivity_first_equivalence}
    \vartheta_{\mathrm{outlet}} - \vartheta_{\mathrm{inlet}} = \frac{1}{\chi} \left(\int_{\Omega} f(\mathbf{x}) \, \mathrm{d} \Omega - \mathrm{meas}(\Omega) \, h_{T} 
    \, \big(\overline{\vartheta} - \vartheta_{\mathrm{amb}}\big) \right)
\end{align}
In the above equation $f_0$, $h_T$, $\vartheta_{\mathrm{amb}}$, $\mathrm{meas}(\Omega)$, and $\chi$ are all independent of $\overline{\vartheta}$. Noting the negative sign in the term containing $h_T$, we conclude that minimizing the mean surface temperature will maximize the difference  $(\vartheta_{\mathrm{outlet}} - \vartheta_{\mathrm{inlet}})$. Since the inlet temperature (which is equal to the ambient temperature) is a prescribed quantity and a constant, maximizing the difference $(\vartheta_{\mathrm{outlet}} - \vartheta_{\mathrm{inlet}})$ is the same as maximizing the outlet temperature, thereby establishing the \emph{second} equivalence. The \emph{third} equivalence is evident from the definition of thermal efficiency \eqref{Eqn:Sensitivity_eta_definition_general}, which is proportional to the difference between the outlet and inlet temperatures. 

For the \emph{fourth} equivalence, we start with the definition for $\overline{\vartheta}_{\mathrm{HSS}}$ (Eq.~\eqref{Eqn:Sensitivity_Theta_HSS_definition}):
\begin{align}
    \label{Eqn:Sensitivity_Equivalence_Theta_HSS_average}
    \overline{\vartheta}_{\mathrm{HSS}} 
    = \vartheta_{\mathrm{amb}} + \frac{1}{h_T \, \mathrm{meas}(\Omega)} \int_{\Omega} f(\mathbf{x}) \, \mathrm{d} \Omega 
\end{align}
Equation \eqref{Eqn:Sensitivity_first_equivalence} can be rearranged as follows:
\begin{align}
    \label{Eqn:Sensitivity_Equivalence_Theta_average}
    \overline{\vartheta}
    = \vartheta_{\mathrm{amb}}
    + \frac{1}{h_T \, \mathrm{meas}(\Omega)} \int_{\Omega} f(\mathbf{x}) \, \mathrm{d} \Omega 
    - \frac{\chi}{h_T \, \mathrm{meas}(\Omega)} 
    \big(\vartheta_{\mathrm{outlet}} 
    - \vartheta_{\mathrm{inlet}}\big)
\end{align}
Subtracting Eq.~\eqref{Eqn:Sensitivity_Equivalence_Theta_average} from Eq.~\eqref{Eqn:Sensitivity_Equivalence_Theta_HSS_average}, we get: 
\begin{align}
    \label{Eqn:Sensitivity_Equivalence_Difference}
    \overline{\vartheta}_{\mathrm{HSS}} - 
    \overline{\vartheta}
    = \frac{\chi}{h_T \, \mathrm{meas}(\Omega)} 
    \big(\vartheta_{\mathrm{outlet}} 
    - \vartheta_{\mathrm{inlet}}\big)
\end{align}
Hence, maximizing the difference of outlet and inlet temperatures---equivalent to minimizing the mean surface temperature, from the first equivalence---implies maximizing the difference $(\overline{\vartheta}_{\mathrm{HSS}} - 
    \overline{\vartheta})$.
    
For the \emph{fifth} equivalence, we use the maximum and minimum principles recently presented by \cite{nakshatrala2022modeling}. For the case of $f(\mathbf{x}) = f_0$, the temperature field satisfies the following ordering: 
\begin{align}
    \vartheta_{\mathrm{amb}} \leq \vartheta(\mathbf{x}) \leq \overline{\vartheta}_{\mathrm{HSS}}
\end{align}
which further implies the mean surface temperature is bounded by 
\begin{align}
    \vartheta_{\mathrm{amb}} \leq \overline{\vartheta} \leq \overline{\vartheta}_{\mathrm{HSS}}
\end{align}
So, the above bounds implies that maximizing the difference $\overline{\vartheta}_{\mathrm{HSS}} - \overline{\vartheta}$ will minimize the difference $\overline{\vartheta} - \overline{\vartheta}_{\mathrm{amb}}$. This observation, alongside the fourth equivalence, will establish the desired result.

In the following two sections, we address the second question outlined in the introduction; we estimate the sensitivity of the quantity of interest (MST) to the two chosen design variables. To facilitate a pithy presentation, we avail functionals and their calculus. 

\subsection{Functionals and their calculus}
A quantity of interest (in our case, the mean surface temperature) depends on the solution field, which changes upon altering the values of the design variables. However, a solution field is \emph{not} an explicit function of design variables. Nevertheless, a sensitivity analysis should account for this solution field's non-explicit dependence. Ergo, for clarity and to ease the sensitivity analysis calculations, we introduce the ``semi-colon" notation and avail calculus of functionals. We write $\vartheta(\mathbf{x};\xi(\mathbf{x}))$ to  mean that $\vartheta$ does not explicitly depend on the quantities to the right of the semi-colon, but $\vartheta$ changes upon altering them.

In its simplest terms, a functional is a function of functions \citep{gelfand2000calculus}. So, we write a quantity of interest $\Phi$ depending on a design variable $\xi(\mathbf{x})$ as a functional of the form $\Phi[\xi(\mathbf{x})]$. Functionals have their own rich, well-established calculus. However, all we need in our sensitivity analysis is the notion of Fr\'echet derivative. For a functional $\Phi[\xi(\mathbf{x})]$, we denote its Fr\'echet derivative by $D\Phi[\xi(\mathbf{x})]$ with definition \citep{spivak2018calculus}:
\begin{align}
    \label{Eqn:Sensitivity_Frechet_definition}
    \lim_{\|\Delta\xi(\mathbf{x})\| \rightarrow 0} \;  \frac{\Phi[\xi(\mathbf{x}) + \Delta\xi(\mathbf{x})] - \Phi[\xi(\mathbf{x})] - D\Phi[\xi(\mathbf{x})] 
    \bullet \Delta \xi(\mathbf{x})}{\|\Delta\xi(\mathbf{x})\|} 
    = 0
\end{align}
where $\|\cdot\|$ is the Frobenius norm. If the functional is \emph{continuously} differentiable, then G\^ateaux variation furnishes an easier route---than the definition \eqref{Eqn:Sensitivity_Frechet_definition}---to calculate the derivative:
\begin{align}
    \left[\frac{d}{d \epsilon} \; \Phi[\xi(\mathbf{x}) + \epsilon \, \Delta \xi(\mathbf{x})] \right]_{\epsilon=0} 
    = \delta\Phi[\xi(\mathbf{x})] \bullet \Delta\xi(\mathbf{x}) 
    = D\Phi[\xi(\mathbf{x})] \bullet \Delta\xi(\mathbf{x}) 
    \quad \forall \Delta\xi(\mathbf{x})
\end{align}
implying $D\Phi[\xi(\mathbf{x})] = \delta \Phi[\xi(\mathbf{x})]$.

For easy reference, we distinguish the sensitivities to the chosen two design variables. If $\xi(\mathbf{x}) = \chi$ (i.e., the design variable is the heat capacity rate), we denote the corresponding Fr\'echet derivative as 
\begin{align}
    \Phi^{\#}[\chi] \equiv D\Phi[\chi]
\end{align}
Likewise, if $\xi(\mathbf{x}) = \kappa(\mathbf{x})$, we use the notation:  
\begin{align}
    \Phi^{\prime}[\kappa(\mathbf{x})] \equiv D\Phi[\kappa(\mathbf{x})]
\end{align}

For the scalar solution field $\vartheta(\mathbf{x};\xi(\mathbf{x}))$ (i.e., the temperature field satisfying the forward problem), we define $D_1 \vartheta$ and $D_2\vartheta$ as follows:
\begin{align}
    &\lim_{\|\Delta\mathbf{x}\| \rightarrow 0} \;  \frac{\vartheta\big(\mathbf{x}+\Delta \mathbf{x};\xi(\mathbf{x})\big) 
    - \vartheta\big(\mathbf{x};\xi(\mathbf{x})\big)
    - D_{1}\vartheta\big(\mathbf{x};\xi(\mathbf{x})\big)
    \bullet \Delta \mathbf{x}}{\|\Delta\mathbf{x}\|} 
    = 0 \\
    &\lim_{\|\Delta\xi(\mathbf{x})\| \rightarrow 0} \;  \frac{\vartheta\big(\mathbf{x};\xi(\mathbf{x})+ \Delta \xi(\mathbf{x})\big) 
    - \vartheta\big(\mathbf{x};\xi(\mathbf{x})\big)
    - D_{2}\vartheta\big(\mathbf{x};\xi(\mathbf{x})\big)
    \bullet \Delta \xi(\mathbf{x})}{\|\Delta\xi(\mathbf{x})\|} 
    = 0
\end{align}
Then the spatial derivative is related as follows: 
\begin{align}
    \mathrm{grad}\big[\vartheta\big(\mathbf{x};\xi(\mathbf{x})\big)\big] = D_{1} \vartheta\big(\mathbf{x};\xi(\mathbf{x})\big)
\end{align}
Similar to the notation used for the functional $\Phi$, we use the following notation to represent the solution sensitivities:
\begin{align}
    \vartheta^{\#}\big(\mathbf{x};\chi\big) = D_2\vartheta\big(\mathbf{x};\chi\big) 
    \quad \mathrm{and} \quad
    \vartheta^{\prime}\big(\mathbf{x};\kappa(\mathbf{x})\big) = D_2\vartheta\big(\mathbf{x};\kappa(\mathbf{x})\big) 
\end{align}

In the next section, we mathematically investigate the sensitivity of the mean surface temperature to the fluid's heat capacity rate. 

\section{SENSITIVITY OF MST TO COOLANT'S HEAT CAPACITY RATE}
\label{Sec:S4_Sensitivity_Heat_capacity_rate}
For this set of sensitivity analysis, we write the objective functional as follows:
\begin{align}
    \label{Eqn:Sensitivity_Phi_for_chi}
    \Phi[\chi] 
    &= \int_{\Omega} 
    \vartheta\big(\mathbf{x};\chi\big) \, \mathrm{d} \Omega 
\end{align}
Since, in this case, $\Phi = \mathrm{meas}(\Omega) \, \overline{\vartheta}$ and $\mathrm{meas}(\Omega)$---the area of the domain---is a constant, minimizing $\Phi$ is \emph{equivalent} to minimizing the mean surface temperature. Our usage of this alternative (but equivalent) objective functional is for mathematical convenience: to avoid writing $1/\mathrm{meas}(\Omega)$ factor often.

To find the associated design sensitivity, the task at hand is to calculate $D \Phi[\chi]$:
\begin{align}
    \label{Eqn:Sensitivity_DPhi_chi}
    D \Phi[\chi] 
    = \int_{\Omega} 
    \vartheta^{\#}\big(\mathbf{x};\chi\big) \, \mathrm{d} \Omega 
\end{align}
where a superscript $\#$ denotes the Fr\'echet derivative with respect to $\chi$. Notice that the above expression for the design sensitivity contains solution sensitivity $\vartheta^{\#}(\mathbf{x};\chi)$. But, the solution is unknown without solving the direct problem; the same situation even with the solution sensitivity. It will be ideal if we can estimate  $D\Phi[\chi]$ without actually finding $\vartheta^{\#}(\mathbf{x};\chi)$. The \emph{adjoint state method} provides one such viable route, and we will avail it below.

We augment Eq.~\eqref{Eqn:Sensitivity_DPhi_chi} with terms involving products of a Lagrange multiplier and derivatives with respect to $\chi$ of the residuals of state equations (i.e., governing equations of the direct problem). After augmenting these terms, the design sensitivity can be equivalently written as follows:
\begin{align}
    \label{Eqn:Sensitivity_Expression_chi_original_terms}
    D \Phi[\chi] 
    &= \int_{\Omega} \vartheta^{\#}(\mathbf{x};\chi) \, \mathrm{d} \Omega \notag \\
    & \quad \quad \quad +\frac{1}{f_0} \int_{\Omega \setminus \Sigma}
    \Big(\mu(\mathbf{x}) - \vartheta_{\mathrm{amb}}\Big) \,  \Big(\underbrace{d \, \mathrm{div}[\kappa(\mathbf{x})\mathrm{grad}[\vartheta(\mathbf{x};\chi)]] + f_{0} - h_{T}\big(\vartheta(\mathbf{x};\chi) - \vartheta_{\mathrm{amb}}\big)}_{Eq.~\eqref{Eqn:Sensitivity_BoE}}\Big)^{\#} \mathrm{d} \Omega \notag \\
    &\quad \quad \quad -\frac{1}{f_0} \int_{\Sigma}  \Big\llangle\mu(\mathbf{x}) -  \vartheta_{\mathrm{amb}}\Big\rrangle \, \Big(\underbrace{\big\llbracket d \, \kappa(\mathbf{x}) \mathrm{grad}[\vartheta(\mathbf{x};\chi)]\big\rrbracket + \chi \, \llangle \mathrm{grad}[\vartheta(\mathbf{x};\chi)]\bullet \widehat{\mathbf{t}}(\mathbf{x})\rrangle}_{Eq.~\eqref{Eqn:Sensitivity_q_jump_condition}}\Big)^{\#} \mathrm{d} \Gamma \notag \\
    &\quad \quad \quad +\frac{1}{f_0} \int_{\Sigma} \Big\llangle d \, \kappa(\mathbf{x}) \mathrm{grad}[\mu(\mathbf{x})]\Big\rrangle
    \bullet 
    \Big(\underbrace{ 
    \llbracket\vartheta(\mathbf{x};\chi)\rrbracket}_{Eq.~\eqref{Eqn:Sensitivity_temp_jump_condition}}\Big)^{\#} \, \mathrm{d} \Gamma \notag \\
    &\quad \quad \quad +\frac{1}{f_0} \int_{\partial \Omega} \Big(\mu(\mathbf{x}) -  \vartheta_{\mathrm{amb}}\Big) \, \Big(\underbrace{-d \, \kappa(\mathbf{x}) \mathrm{grad}[\vartheta(\mathbf{x};\chi)] \bullet \widehat{\mathbf{n}}(\mathbf{x})}_{Eq.~\eqref{Eqn:Sensitivity_q_BC}}\Big)^{\#} \, \mathrm{d} \Gamma \notag \\
    &\quad \quad \quad -\frac{\chi}{f_0} \Big(\llangle\mu(\mathbf{x})\rrangle - \vartheta_{\mathrm{amb}}\Big) \,  \Big(\underbrace{\llangle\vartheta(\mathbf{x};\chi) \rrangle - \vartheta_{\mathrm{amb}}}_{Eq.~\eqref{Eqn:Sensitivity_inlet_BC}}\Big)^{\#} \Big|_{s = 0 \;  (\mathrm{inlet})}
\end{align}
where $\mu(\mathbf{x})$ is the newly introduced Lagrange multiplier---also known as the adjoint variable. In the above equation, the factors introduced in the augmented terms are for getting an easy-to-work adjoint state problem, which will be apparent later in the derivation. 

After applying Green's theorem twice, using identities \eqref{Eqn:Sensitivities_avg_jump_identities}, and grouping the terms, Eq.~\eqref{Eqn:Sensitivity_Expression_chi_original_terms} can be rewritten as follows (see Appendix for a detailed derivation): 
\begin{align}
    \label{Eqn:Sensitivity_Expression_chi_collected_terms}
    D \Phi[\chi] 
    &= - \frac{1}{f_0} \int_{\Sigma} \Big(\big\llangle\mu(\mathbf{x})\big\rrangle - \vartheta_{\mathrm{amb}} \Big)
    \, \big\llangle
    \mathrm{grad}[\vartheta(\mathbf{x};\chi)] \bullet \widehat{\mathbf{t}}(\mathbf{x}) 
    \big\rrangle 
    \, \mathrm{d} \Gamma \notag \\ 
    &\qquad +  \frac{1}{f_0} \int_{\Omega\setminus\Sigma}
    \vartheta^{\#}(\mathbf{x};\chi) 
    \, 
    {\color{purple}
    \Big\{d \, \mathrm{div}[\kappa(\mathbf{x})\mathrm{grad}[\mu(\mathbf{x})]] + f_{0} - h_{T}\big(\mu(\mathbf{x}) - \vartheta_{\mathrm{amb}}\big)\Big\}
    \, \mathrm{d} \Omega
    }
    \notag \\
    &\qquad - \frac{1}{f_0} \int_{\Sigma} \big\llangle \vartheta^{\#}(\mathbf{x};\chi)
    \big\rrangle \, 
    {\color{purple}
    \Big\{
    \big\llbracket d \, \kappa(\mathbf{x}) \mathrm{grad}[\mu(\mathbf{x})] \big\rrbracket 
    - \chi \, \big\llangle \mathrm{grad}[\mu(\mathbf{x})] \bullet
    \widehat{\mathbf{t}}(\mathbf{x}) \big\rrangle 
    \Big\}
    \, \mathrm{d} \Gamma
    }
    \notag \\
    &\qquad + \frac{1}{f_0} \int_{\Sigma} \big\llangle d \, \kappa(\mathbf{x}) \mathrm{grad}[\vartheta^{\#}(\mathbf{x};\chi)]\big\rrangle \bullet 
    {\color{purple}
    \Big\{\llbracket \mu(\mathbf{x}) - \vartheta_{\mathrm{amb}}\rrbracket \Big\} 
    \, \mathrm{d} \Gamma
    }
    \notag \\
    &\qquad + \frac{1}{f_0} \int_{\partial \Omega} \vartheta^{\#}(\mathbf{x};\chi) \, 
    {\color{purple}
    \Big\{-d \, 
    \kappa(\mathbf{x}) \mathrm{grad}[\mu(\mathbf{x})] 
    \bullet \widehat{\mathbf{n}}(\mathbf{x}) \Big\} 
    \, \mathrm{d} \Gamma
    }
    \notag \\
    &\qquad 
    - \frac{\chi}{f_0} \big\llangle \vartheta^{\#}(\mathbf{x};\chi) 
    \big\rrangle \, 
    {\color{purple}
    \Big\{\big\llangle\mu(\mathbf{x})\big\rrangle - \vartheta_{\mathrm{amb}}\Big\} \Big\vert_{s = 1} 
    }
\end{align}
Note that $\mu(\mathbf{x})$ is arbitrary till now, and a prudent choice for it will simplify the design sensitivity expression. A heedful look at Eq.~\eqref{Eqn:Sensitivity_Expression_chi_collected_terms} reveals that if we set all the terms in the curly brackets\footnote{We also marked these terms in purple for the reader's benefit---see an online version of this article.} to zero, the resulting expression for the design sensitivity will not contain $\vartheta^{\#}(\mathbf{x};\chi)$. 

Duly, we force the mentioned terms to vanish, giving rise to the following boundary value problem: 
\begin{subequations}
\begin{alignat}{2}
    \label{Eqn:Sensitivity_BoE_AP_chi} 
    -&d \, \mathrm{div}[\kappa(\mathbf{x})\mathrm{grad}[\mu(\mathbf{x})]] = f_{0}
    - h_{T} \,  \big(\mu(\mathbf{x}) - \vartheta_{\mathrm{amb}}\big) 
    && \quad \mathrm{in} \; \Omega \setminus \Sigma \\
    \label{Eqn:Sensitivity_q_jump_condition_AP_chi} 
    -&d \, \llbracket\kappa(\mathbf{x}) \mathrm{grad}[\mu(\mathbf{x})]\rrbracket = -\chi \, \llangle \mathrm{grad}[\mu(\mathbf{x})] \bullet \widehat{\mathbf{t}}(\mathbf{x}) \rrangle  
    && \quad \mathrm{on} \; \Sigma \\
    \label{Eqn:Sensitivity_temp_jump_condition_AP_chi} 
    &\llbracket\mu(\mathbf{x}) \rrbracket = 0  
    && \quad \mathrm{on} \; \Sigma \\
    \label{Eqn:Sensitivity_q_BC_AP_chi} 
    -&d \, \widehat{\mathbf{n}}(\mathbf{x}) \bullet \kappa(\mathbf{x})\mathrm{grad}[\mu(\mathbf{x})] = 0 
    && \quad \mathrm{on} \; \partial \Omega \\
    \label{Eqn:Sensitivity_inlet_BC_AP_chi} 
    &\llangle \mu(\mathbf{x}) \rrangle = \vartheta_{\mathrm{amb}} 
    && \quad \mathrm{at} \, s = 1 \, \mathrm{on} \, \Sigma 
\end{alignat}
\end{subequations}
The above boundary value problem is referred to as the adjoint state problem (or adjoint problem, for brevity). If $\mu(\mathbf{x})$ satisfies the above adjoint state problem, the design sensitivity takes the following compact form:
\begin{align}
    \label{Eqn:ROM_Sensitivity_wrt_chi}
    D\Phi[\chi] 
    = - \frac{1}{f_0} \int_{\Sigma} \Big(\big\llangle\mu(\mathbf{x}) 
    \big \rrangle- \vartheta_{\mathrm{amb}} \Big) \big\llangle \mathrm{grad}[\vartheta(\mathbf{x};\chi)] \bullet \widehat{\mathbf{t}}(\mathbf{x}) 
    \big \rrangle \, \mathrm{d} \Gamma 
\end{align}

Recalling the discussion in \S\ref{Subsec:Sensitivity_forward_reverse_flows}, we note that the above adjoint state problem is identical to the boundary value problem under reverse flow conditions (cf.~Eqs.~\eqref{Eqn:Sensitivity_BoE_Reverse}--\eqref{Eqn:Sensitivity_inlet_BC_Reverse}), 
which is s well-posed problem with a unique solution \citep{nakshatrala2022modeling}. Thus, the solution to the adjoint variable is 
\begin{align}
    \mu(\mathbf{x}) = \vartheta^{(r)}(\mathbf{x}) 
\end{align}
With this identification, invoking the continuity of $\mu(\mathbf{x})$ across the vasculature \eqref{Eqn:Sensitivity_temp_jump_condition_AP_chi}, and noting Remark \ref{Remark:Sensitivity_Tangential_component}, the sensitivity of the \emph{mean surface temperature} to the \emph{fluid's heat capacity rate} amounts to
\begin{align}
    \label{Eqn:ROM_Sensitivity_wrt_chi_modified}
    D\Phi[\chi] 
    &= - \frac{1}{f_0} \int_{\Sigma} \left(\vartheta^{(r)}(\mathbf{x}) - \vartheta_{\mathrm{amb}} \right) \mathrm{grad}\Big[\vartheta^{(f)}(\mathbf{x})\Big] \bullet \widehat{\mathbf{t}}(\mathbf{x}) \, \mathrm{d} \Gamma 
\end{align}
where $\vartheta^{(f)}(\mathbf{x})$ is the solution of the boundary value problem under the forward flow conditions---the solution of the direct problem (i.e.,  Eqs.~\eqref{Eqn:Sensitivity_BoE}--\eqref{Eqn:Sensitivity_inlet_BC}). 

The exact expressions for $\vartheta^{(f)}(\mathbf{x})$ and $\vartheta^{(r)}(\mathbf{x})$ are \emph{un}known \emph{a priori}. One often solves the direct and adjoint boundary value problems to get these solutions using a numerical method or an analytical technique. Since we are interested only in the design sensitivity's sign, we do not attempt to solve these problems but avail mathematical analysis to assess the remarked sign instead.

\begin{theorem}
    \label{Thm:Sensitivity_DPhiChi}
    Under uniform $f(\mathbf{x}) = f_0$, the sensitivity of the mean surface temperature to the heat capacity rate is \emph{non-positive}. That is, 
    \begin{align}
        D\Phi[\chi] 
        = - \frac{1}{f_0} \int_{\Sigma} \left(\vartheta^{(r)}(\mathbf{x}) - \vartheta_{\mathrm{amb}} \right) \mathrm{grad}\Big[\vartheta^{(f)}(\mathbf{x})\Big] \bullet \widehat{\mathbf{t}}(\mathbf{x}) \, \mathrm{d} \Gamma \leq 0
    \end{align}
\end{theorem}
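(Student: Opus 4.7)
The plan is to recast $\chi f_0\,D\Phi[\chi]$, through the forward jump condition and a pair of Green's identities, as an expression whose sign is controlled by Cauchy--Schwarz on a bilinear form that the flow-reversal invariance renders symmetric.

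First, using the forward jump condition \eqref{Eqn:Sensitivity_q_jump_condition} (and Remark~\ref{Remark:Sensitivity_Tangential_component}), I replace $\mathrm{grad}[\vartheta^{(f)}]\bullet\widehat{\mathbf{t}}$ by $-(d/\chi)\,\bigl\llbracket \kappa\,\mathrm{grad}[\vartheta^{(f)}]\bigr\rrbracket$, so that
\begin{align*}
\chi f_0\,D\Phi[\chi] \;=\; d\int_\Sigma \bigl(\vartheta^{(r)}-\vartheta_{\mathrm{amb}}\bigr)\,\bigl\llbracket\kappa\,\mathrm{grad}[\vartheta^{(f)}]\bigr\rrbracket\,d\Gamma.
\end{align*}
Next, I test the forward state equation \eqref{Eqn:Sensitivity_BoE} against $(\vartheta^{(r)}-\vartheta_{\mathrm{amb}})$ and apply Green's identity on each sub-domain $\Omega^\pm$: the surface integrals on $\Sigma$ merge into the same jump because $\vartheta^{(r)}$ is continuous across $\Sigma$ (Eq.~\eqref{Eqn:Sensitivity_temp_jump_condition_Reverse}), the $\partial\Omega$-contribution vanishes by \eqref{Eqn:Sensitivity_q_BC}, and the MST-invariance \eqref{Eqn:Sensitivity_invariance_MST} identifies $\overline{\vartheta}^{(r)}$ with $\overline{\vartheta}$. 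This delivers
\begin{align*}
\chi f_0\,D\Phi[\chi] \;=\; S - f_0\,\mathrm{meas}(\Omega)\bigl(\overline{\vartheta}-\vartheta_{\mathrm{amb}}\bigr),
\end{align*}
where $S := d\int_\Omega \kappa\,\mathrm{grad}[\vartheta^{(f)}]\bullet\mathrm{grad}[\vartheta^{(r)}]\,d\Omega + h_T\int_\Omega (\vartheta^{(f)}-\vartheta_{\mathrm{amb}})(\vartheta^{(r)}-\vartheta_{\mathrm{amb}})\,d\Omega$.

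The second Green's-identity application tests \eqref{Eqn:Sensitivity_BoE} against $(\vartheta^{(f)}-\vartheta_{\mathrm{amb}})$ itself; the inlet condition \eqref{Eqn:Sensitivity_inlet_BC} makes the $\Sigma$-integral telescope ($\int_0^L U\,U'\,ds = \tfrac{1}{2}U(L)^2$ with $U = \vartheta^{(f)}-\vartheta_{\mathrm{amb}}$ vanishing at $s=0$), producing the energy identity
\begin{align*}
f_0\,\mathrm{meas}(\Omega)\bigl(\overline{\vartheta}-\vartheta_{\mathrm{amb}}\bigr) \;=\; S_f + \tfrac{\chi}{2}\bigl(\vartheta_{\mathrm{outlet}}-\vartheta_{\mathrm{amb}}\bigr)^2,
\end{align*}
with $S_f := d\int_\Omega \kappa\,|\mathrm{grad}[\vartheta^{(f)}]|^2\,d\Omega + h_T\int_\Omega (\vartheta^{(f)}-\vartheta_{\mathrm{amb}})^2\,d\Omega$. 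The analogous computation for the reverse problem yields the same right-hand side---the first equivalence of \S\ref{Sec:S3_Sensitivity_Ramifications} together with \eqref{Eqn:Sensitivity_invariance_MST} forces $\vartheta_{\mathrm{outlet}}^{(r)}=\vartheta_{\mathrm{outlet}}^{(f)}$---so $S_f = S_r$, where $S_r$ is the analogous quadratic in $\vartheta^{(r)}$. Substituting, $\chi f_0\,D\Phi[\chi] = (S - S_f) - \tfrac{\chi}{2}(\vartheta_{\mathrm{outlet}}-\vartheta_{\mathrm{amb}})^2$.

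To finish, I recognize $\langle u,v\rangle := d\int_\Omega \kappa\,\mathrm{grad}[u]\bullet\mathrm{grad}[v]\,d\Omega + h_T\int_\Omega uv\,d\Omega$ as a genuine inner product on the solution space (positive definite because $\kappa, h_T > 0$), with $S = \langle u,v\rangle$, $S_f = \langle u,u\rangle$, $S_r = \langle v,v\rangle$ for $u = \vartheta^{(f)}-\vartheta_{\mathrm{amb}}$, $v = \vartheta^{(r)}-\vartheta_{\mathrm{amb}}$. Cauchy--Schwarz then gives $S \leq \sqrt{S_f S_r} = S_f$, so $S - S_f \leq 0$, and combined with the non-positive outlet-squared term concludes $D\Phi[\chi] \leq 0$. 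The step I expect to be the main obstacle is the sign bookkeeping in the Green's-identity derivations: the jump $\llbracket\cdot\rrbracket$ must be threaded correctly through the splitting of $\Omega$ at $\Sigma$, its sign matched against the jump condition \eqref{Eqn:Sensitivity_q_jump_condition} and the Neumann boundary \eqref{Eqn:Sensitivity_q_BC}, and the factor $-\chi$ in the $\Sigma$-integral evaluated consistently with the direction of $\widehat{\mathbf{t}}$; any slip there would corrupt the bilinear structure on which Cauchy--Schwarz relies.
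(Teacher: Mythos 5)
Your proposal is correct, and it is at heart the same energy-method argument as the paper's, repackaged. Your three Green's-identity relations are precisely the paper's Eqs.~\eqref{Eqn:Sensitivity_chi_step3}, \eqref{Eqn:Sensitivity_chi_step1}, and \eqref{Eqn:Sensitivity_chi_step2}: your cross quantity $S$, forward energy $S_f$, and reverse energy $S_r$ are the bilinear form $\langle u,v\rangle$ evaluated at the same pairings the paper uses, and your telescoping of the advective $\Sigma$-integrals reproduces the paper's Eqs.~\eqref{Eqn:Sensitivity_chi_step7}--\eqref{Eqn:Sensitivity_chi_step9}. The one genuine difference is how the final inequality is assembled. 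The paper forms the combination $\eqref{Eqn:Sensitivity_chi_step1} + \eqref{Eqn:Sensitivity_chi_step2} - 2\times\eqref{Eqn:Sensitivity_chi_step3}$ and uses only $\|u-v\|^2 \geq 0$, i.e., $2S \leq S_f + S_r$, which requires \emph{no} relation between $S_f$ and $S_r$. You instead prove the extra lemma $S_f = S_r$---via the global balance \eqref{Eqn:Sensitivity_first_equivalence} applied to both flow directions plus the invariance \eqref{Eqn:Sensitivity_invariance_MST}, forcing $\vartheta^{(f)}_{\mathrm{outlet}} = \vartheta^{(r)}_{\mathrm{outlet}}$---and then invoke Cauchy--Schwarz, $S \leq \sqrt{S_f S_r} = S_f$. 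Your lemma is needed for your route to close (Cauchy--Schwarz alone would not give $S \leq S_f$ if $S_r$ could exceed $S_f$), and your justification of it is sound; it even yields a byproduct the paper never states, namely that the outlet temperature is invariant under flow reversal, along with the quantitative bound $D\Phi[\chi] \leq -\tfrac{1}{2f_0}\big(\vartheta_{\mathrm{outlet}} - \vartheta_{\mathrm{amb}}\big)^2$, which matches what the paper's Eq.~\eqref{Eqn:Sensitivity_chi_step10} implies. Two shared caveats, which affect the paper equally and so are not gaps in your argument specifically: your very first substitution of the jump condition \eqref{Eqn:Sensitivity_q_jump_condition} divides by $\chi$, and the outlet-temperature lemma likewise cancels a factor of $\chi$, so the argument is vacuous at $\chi = 0$ (the paper's final step, dividing inequality \eqref{Eqn:Sensitivity_chi_step11} by $\chi$, has the identical limitation); and the sign bookkeeping you flagged as the main obstacle does work out cleanly under the paper's symmetric jump convention \eqref{Eqn:Sensitivity_jump_operator}, with continuity of the test field across $\Sigma$ killing the $\llbracket\cdot\rrbracket\bullet\llangle\cdot\rrangle$ term in identity \eqref{Eqn:Sensitivities_avg_jump_identities_a} exactly as you assumed.
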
 
\begin{proof}
By multiplying the first equation of the forward flow problem by $(\vartheta^{(f)}(\mathbf{x}) - \vartheta_{\mathrm{amb}})$, applying Green's identity, and using the rest of the equations, we write  
\begin{align}
    \label{Eqn:Sensitivity_chi_step1}
    \int_{\Omega} d \, \mathrm{grad}\left[\vartheta^{(f)}(\mathbf{x})\right] \bullet \kappa(\mathbf{x}) \mathrm{grad}\left[\vartheta^{(f)}(\mathbf{x})\right] \, \mathrm{d} \Omega
    +\int_{\Sigma} \chi \, 
    \left(\vartheta^{(f)}(\mathbf{x}) - \vartheta_{\mathrm{amb}}\right)
    \mathrm{grad}\left[\vartheta^{(f)}(\mathbf{x})\right] \bullet
    \widehat{\mathbf{t}}(\mathbf{x}) \, \mathrm{d} \Gamma \notag \\
    =\int_{\Omega} f(\mathbf{x}) \left(\vartheta^{(f)}(\mathbf{x}) - \vartheta_{\mathrm{amb}}\right) \mathrm{d} \Omega
    -\int_{\Omega} h_{T} \left(\vartheta^{(f)}(\mathbf{x}) - \vartheta_{\mathrm{amb}}\right)^2 \mathrm{d} \Omega
\end{align}
Likewise, by multiplying the first equation of the reverse flow problem by $(\vartheta^{(r)}(\mathbf{x}) - \vartheta_{\mathrm{amb}})$ and following similar steps as before, we get the following: 
\begin{align}
    \label{Eqn:Sensitivity_chi_step2}
    \int_{\Omega} d \, \mathrm{grad}\left[\vartheta^{(r)}(\mathbf{x})\right] \bullet \kappa(\mathbf{x}) \mathrm{grad}\left[\vartheta^{(r)}(\mathbf{x})\right] \, \mathrm{d} \Omega
    -\int_{\Sigma} \chi \, 
    \left(\vartheta^{(r)}(\mathbf{x}) - \vartheta_{\mathrm{amb}}\right)
    \mathrm{grad}\left[\vartheta^{(r)}(\mathbf{x})\right] \bullet
    \widehat{\mathbf{t}}(\mathbf{x}) \, \mathrm{d} \Gamma \notag \\
    =\int_{\Omega} f(\mathbf{x}) \left(\vartheta^{(r)}(\mathbf{x}) - \vartheta_{\mathrm{amb}}\right) \mathrm{d} \Omega
    -\int_{\Omega} h_{T} \left(\vartheta^{(r)}(\mathbf{x}) - \vartheta_{\mathrm{amb}}\right)^2 \mathrm{d} \Omega
\end{align}
Finally, by multiplying the first equation of the forward flow problem by $(\vartheta^{(r)}(\mathbf{x}) - \vartheta_{\mathrm{amb}})$, we get the following:
\begin{align}
    \label{Eqn:Sensitivity_chi_step3}
    \int_{\Omega} d \, \mathrm{grad}\left[\vartheta^{(r)}(\mathbf{x})\right] \bullet \kappa(\mathbf{x}) \mathrm{grad}\left[\vartheta^{(f)}(\mathbf{x})\right] \, \mathrm{d} \Omega
    +\int_{\Sigma} \chi \, 
    \left(\vartheta^{(r)}(\mathbf{x}) - \vartheta_{\mathrm{amb}}\right)
    \mathrm{grad}\left[\vartheta^{(f)}(\mathbf{x})\right] \bullet 
    \widehat{\mathbf{t}}(\mathbf{x}) \, \mathrm{d} \Gamma \notag \\
    =\int_{\Omega} f(\mathbf{x}) \left(\vartheta^{(r)}(\mathbf{x}) - \vartheta_{\mathrm{amb}}\right) \mathrm{d} \Omega
    -\int_{\Omega} h_{T} \left(\vartheta^{(r)}(\mathbf{x}) - \vartheta_{\mathrm{amb}}\right) 
    \left(\vartheta^{(f)}(\mathbf{x}) - \vartheta_{\mathrm{amb}}\right) \mathrm{d} \Omega
\end{align}
We add Eqs.~\eqref{Eqn:Sensitivity_chi_step1} and \eqref{Eqn:Sensitivity_chi_step2}, and from this sum we subtract  twice Eq.~\eqref{Eqn:Sensitivity_chi_step3} (i.e., Eq.~\eqref{Eqn:Sensitivity_chi_step1} + Eq.~\eqref{Eqn:Sensitivity_chi_step2} -- 2 $\times$ Eq.~\eqref{Eqn:Sensitivity_chi_step3}); this calculation amounts to
\begin{align}
    \label{Eqn:Sensitivity_chi_step4}
    &\int_{\Omega} d \, \mathrm{grad}\left[\vartheta^{(f)}(\mathbf{x}) - \vartheta^{(r)}(\mathbf{x})\right] \bullet \kappa(\mathbf{x}) \mathrm{grad}\left[\vartheta^{(f)}(\mathbf{x}) - \vartheta^{(r)}(\mathbf{x})\right] \, \mathrm{d} \Omega
    +\int_{\Omega} h_{T} \left(\vartheta^{(f)}(\mathbf{x}) - 
    \vartheta^{(r)}(\mathbf{x})\right)^{2} 
    \mathrm{d} \Omega \notag \\
    &\qquad \qquad = \int_{\Omega} f(\mathbf{x}) \left(\vartheta^{(f)}(\mathbf{x}) - \vartheta^{(r)}(\mathbf{x})\right) \mathrm{d} \Omega \notag \\
    &\qquad \qquad \qquad -\int_{\Sigma} \chi \, 
    \left(\vartheta^{(f)}(\mathbf{x}) - \vartheta_{\mathrm{amb}}\right)
    \mathrm{grad}\left[\vartheta^{(f)}(\mathbf{x})\right] \bullet
    \widehat{\mathbf{t}}(\mathbf{x}) \, \mathrm{d} \Gamma \notag \\
    &\qquad \qquad \qquad +\int_{\Sigma} \chi \, 
    \left(\vartheta^{(r)}(\mathbf{x}) - \vartheta_{\mathrm{amb}}\right)
    \mathrm{grad}\left[\vartheta^{(r)}(\mathbf{x})\right] \bullet
    \widehat{\mathbf{t}}(\mathbf{x}) \, \mathrm{d}
    \Gamma \notag \\
    &\qquad \qquad \qquad +2\int_{\Sigma} \chi \, 
    \left(\vartheta^{(r)}(\mathbf{x}) - \vartheta_{\mathrm{amb}}\right)
    \mathrm{grad}\left[\vartheta^{(f)}(\mathbf{x})\right] \bullet 
    \widehat{\mathbf{t}}(\mathbf{x}) \, \mathrm{d} \Gamma
\end{align}

The two integrals on the left side of the above equation are non-negative; note $d > 0$ and $\kappa(\mathbf{x}) > 0$. Therefore, we write
\begin{align}
    \label{Eqn:Sensitivity_chi_step5}
    &0 \leq \int_{\Omega} f(\mathbf{x}) \left(\vartheta^{(f)}(\mathbf{x}) - \vartheta^{(r)}(\mathbf{x})\right) 
    \mathrm{d} \Omega \notag \\
    &\qquad \qquad \qquad -\int_{\Sigma} \chi \, 
    \left(\vartheta^{(f)}(\mathbf{x}) - \vartheta_{\mathrm{amb}}\right)
    \mathrm{grad}\left[\vartheta^{(f)}(\mathbf{x})\right] \bullet 
    \widehat{\mathbf{t}}(\mathbf{x}) \, \mathrm{d} \Gamma \notag \\
    &\qquad \qquad \qquad +\int_{\Sigma} \chi \, 
    \left(\vartheta^{(r)}(\mathbf{x}) - \vartheta_{\mathrm{amb}}\right)
    \mathrm{grad}\left[\vartheta^{(r)}(\mathbf{x})\right] \bullet 
    \widehat{\mathbf{t}}(\mathbf{x}) \, \mathrm{d}
    \Gamma \notag \\
    &\qquad \qquad \qquad 
    +2\int_{\Sigma} \chi \, 
    \left(\vartheta^{(r)}(\mathbf{x}) - \vartheta_{\mathrm{amb}}\right)
    \mathrm{grad}\left[\vartheta^{(f)}(\mathbf{x})\right] \bullet
    \widehat{\mathbf{t}}(\mathbf{x}) \, \mathrm{d} \Gamma
\end{align}
By noting $f(\mathbf{x})$ is uniform and invoking the invariance of the mean surface temperature under flow reversal (i.e., Eq.~\eqref{Eqn:Sensitivity_invariance_MST}), the first term on the right side of Eq.~\eqref{Eqn:Sensitivity_chi_step5} vanishes. We thus have the following inequality: 
\begin{align}
    \label{Eqn:Sensitivity_chi_step6}
    \int_{\Sigma} \chi \, 
    \left(\vartheta^{(f)}(\mathbf{x}) - \vartheta_{\mathrm{amb}}\right)
    \mathrm{grad}\left[\vartheta^{(f)}(\mathbf{x})\right] \bullet 
    \widehat{\mathbf{t}}(\mathbf{x}) \, \mathrm{d} \Gamma 
    -\int_{\Sigma} \chi \, 
    \left(\vartheta^{(r)}(\mathbf{x}) - \vartheta_{\mathrm{amb}}\right)
    \mathrm{grad}\left[\vartheta^{(r)}(\mathbf{x})\right] \bullet 
    \widehat{\mathbf{t}}(\mathbf{x}) \, \mathrm{d}
    \Gamma \notag \\
    \qquad \leq 2\int_{\Sigma} \chi \, 
    \left(\vartheta^{(r)}(\mathbf{x}) - \vartheta_{\mathrm{amb}}\right)
    \mathrm{grad}\left[\vartheta^{(f)}(\mathbf{x})\right] \bullet 
    \widehat{\mathbf{t}}(\mathbf{x}) \, \mathrm{d} \Gamma
\end{align}

We will now consider the first term in Eq.~\eqref{Eqn:Sensitivity_chi_step6} and execute the integral along the vasculature:
\begin{align}
    \label{Eqn:Sensitivity_chi_step7}
    \int_{\Sigma} \chi \, 
    \left(\vartheta^{(f)}(\mathbf{x}) - \vartheta_{\mathrm{amb}}\right)
    \mathrm{grad}\left[\vartheta^{(f)}(\mathbf{x})\right] \bullet
    \widehat{\mathbf{t}}(\mathbf{x}) \, \mathrm{d} \Gamma 
    = \frac{\chi}{2} \, 
    \left(\vartheta^{(f)}(\mathbf{x}) - \vartheta_{\mathrm{amb}}\right)^2 \Big|_{s = 0}^{s = 1}
\end{align}
Noting that, under forward flow conditions, $\vartheta^{(f)}(\mathbf{x}) = \vartheta_{\mathrm{amb}}$ at $s = 0$ and $\vartheta^{(f)}(\mathbf{x}) = \vartheta^{(f)}_{\mathrm{outlet}}$ at $s = 1$, we get
\begin{align}
    \label{Eqn:Sensitivity_chi_step8}
    \int_{\Sigma} \chi \, 
    \left(\vartheta^{(f)}(\mathbf{x}) - \vartheta_{\mathrm{amb}}\right)
    \mathrm{grad}\left[\vartheta^{(f)}(\mathbf{x})\right] \bullet
    \widehat{\mathbf{t}}(\mathbf{x}) \, \mathrm{d} \Gamma 
    = \frac{\chi}{2} \, 
    \left(\vartheta^{(f)}_{\mathrm{outlet}} - \vartheta_{\mathrm{amb}}\right)^2 
\end{align}
Carrying out similar steps for the second term in Eq.~\eqref{Eqn:Sensitivity_chi_step6}, we write 
\begin{align}
    \label{Eqn:Sensitivity_chi_step9}
    \int_{\Sigma} \chi \, 
    \left(\vartheta^{(r)}(\mathbf{x}) - \vartheta_{\mathrm{amb}}\right)
    \mathrm{grad}\left[\vartheta^{(r)}(\mathbf{x})\right] \bullet
    \widehat{\mathbf{t}}(\mathbf{x}) \, \mathrm{d} \Gamma 
    &= \frac{\chi}{2} \, 
    \left(\vartheta^{(r)}(\mathbf{x}) - \vartheta_{\mathrm{amb}}\right)^2 \Big|_{s = 0}^{s = 1}
    \notag \\
    &= -\frac{\chi}{2} \, 
    \left(\vartheta^{(r)}_{\mathrm{outlet}} - \vartheta_{\mathrm{amb}}\right)^2 
\end{align}
The minus sign arises because, under reverse flow conditions, $\vartheta^{(r)}(\mathbf{x}) = \vartheta^{(r)}_{\mathrm{outlet}}$ at $s = 0$ and $\vartheta^{(f)}(\mathbf{x}) = \vartheta_{\mathrm{amb}}$ at $s = 1$. Using Eqs.~\eqref{Eqn:Sensitivity_chi_step7} and \eqref{Eqn:Sensitivity_chi_step8}, inequality \eqref{Eqn:Sensitivity_chi_step9} can be written as follows: 
\begin{align}
    \label{Eqn:Sensitivity_chi_step10}
    \frac{\chi}{2} 
    \left(\vartheta^{(f)}_{\mathrm{outlet}} - \vartheta_{\mathrm{amb}}\right)^{2}
    +\frac{\chi}{2} 
    \left(\vartheta^{(r)}_{\mathrm{outlet}} - \vartheta_{\mathrm{amb}}\right)^{2}
    \leq 2\int_{\Sigma} \chi \, 
    \left(\vartheta^{(r)}(\mathbf{x}) - \vartheta_{\mathrm{amb}}\right)
    \mathrm{grad}\left[\vartheta^{(f)}(\mathbf{x})\right] \bullet
    \widehat{\mathbf{t}}(\mathbf{x}) \, \mathrm{d} \Gamma
\end{align}
The left side of the above inequality is non-negative. Hence, we have 
\begin{align}
    \label{Eqn:Sensitivity_chi_step11}
    0 \leq 
    2\int_{\Sigma} \chi \, 
    \left(\vartheta^{(r)}(\mathbf{x}) - \vartheta_{\mathrm{amb}}\right)
    \mathrm{grad}\left[\vartheta^{(f)}(\mathbf{x})\right] \bullet 
    \widehat{\mathbf{t}}(\mathbf{x}) \, \mathrm{d} \Gamma
\end{align}
Noting that $\chi \geq 0$ and $f_0 > 0$ are constants, the above inequality renders the desired result:
\begin{align}
    \label{Eqn:Sensitivity_chi_step12}
    D\Phi[\chi] 
    = - \frac{1}{f_0} \int_{\Sigma} \left(\vartheta^{(r)}(\mathbf{x}) - \vartheta_{\mathrm{amb}} \right) \mathrm{grad}\Big[\vartheta^{(f)}(\mathbf{x})\Big] \bullet \widehat{\mathbf{t}}(\mathbf{x}) \, \mathrm{d} \Gamma \leq 0
\end{align}
\end{proof} 

\subsection{Discussion} The above result is remarkable: We have mathematically shown that, regardless of the vasculature, the mean surface temperature always decreases upon increasing the heat capacity rate. Even if the segments of the vasculature are close by---which introduces countercurrent heat exchange: heat transfers from the coolant to the host solid---the remarked trend is unaltered. Given the heat capacity rate is a product of fluid properties (specific heat capacity, $c_f$, and density, $\rho_f$) and volumetric flow rate ($Q$), two scenarios are pertinent: one can alter the fluid or vary the flow rate.
\begin{enumerate}
    \item For a fixed fluid, increasing the volumetric flow rate will decrease the mean surface temperature. 
    \item For a fixed flow rate while altering the flowing fluid (coolant), the fluid with a higher heat capacity (i.e., product of the density and specific heat capacity: $\rho_f  c_f$) will have a lower mean surface temperature. 
\end{enumerate}

\section{SENSITIVITY OF MST TO THERMAL CONDUCTIVITY}
\label{Sec:S5_Sensitivity_Conductivity}
For estimating the sensitivity of the mean surface temperature to the host material's thermal conductivity field $\kappa(\mathbf{x})$, the task is to find $D \Phi[\kappa(\mathbf{x})]$. We again use the adjoint state method and start with the definition of $D\Phi[\kappa]$: 
\begin{align}
    \label{Eqn:Sensitivity_design_sensitivity_kappa}
    D \Phi[\kappa(\mathbf{x})] 
    &= \int_{\Omega} 
    \vartheta^{\prime}\big(\mathbf{x};\kappa(\mathbf{x})\big) \, \mathrm{d} \Omega 
\end{align}
where a superscript prime denotes the Fr\'echet derivative related to the  conductivity field. 

Following the steps taken in the previous section, we augment Eq.~\eqref{Eqn:Sensitivity_design_sensitivity_kappa} with terms involving weighted integrals, containing the derivatives with respect to $\kappa(\mathbf{x})$ of the residuals of the forward problem's governing equations. $\lambda(\mathbf{x})$ will now denote the corresponding weight (i.e., the Lagrange multiplier or the adjoint variable). After augmenting these terms, the design sensitivity can be written as follows: 
\begin{align}
D \Phi[k(\mathbf{x})] 
&= \int_{\Omega} \vartheta^{\prime}(\mathbf{x};\kappa(\mathbf{x})) \, \mathrm{d} \Omega \notag \\
& \quad \quad \quad +\frac{1}{f_0} \int_{\Omega}
\Big(\lambda(\mathbf{x}) - \vartheta_{\mathrm{amb}}\Big) \;  \Big(\underbrace{d \, \mathrm{div}[\kappa(\mathbf{x}) \, \mathrm{grad}[\vartheta(\mathbf{x};\kappa(\mathbf{x}))]] + f_{0} - h_{T} \, \big(\vartheta(\mathbf{x};\kappa(\mathbf{x})) - \vartheta_{\mathrm{amb}}\big)}_{Eq.~\eqref{Eqn:Sensitivity_BoE}}\Big)^{\prime} \mathrm{d} \Omega \notag \\
&\quad \quad \quad -\frac{1}{f_0} \int_{\Sigma} \Big\llangle\lambda(\mathbf{x}) -  \vartheta_{\mathrm{amb}}\Big\rrangle \; \Big(\underbrace{\llbracket d \, \kappa(\mathbf{x}) \mathrm{grad}[\vartheta(\mathbf{x};\kappa(\mathbf{x}))]\rrbracket + \chi \, 
\big\llangle \mathrm{grad}[\vartheta(\mathbf{x};\kappa(\mathbf{x}))]\bullet \widehat{\mathbf{t}}(\mathbf{x})\big\rrangle}_{Eq.~\eqref{Eqn:Sensitivity_q_jump_condition}}\Big)^{\prime} \mathrm{d} \Gamma \notag \\
&\quad \quad \quad +\frac{1}{f_0} \int_{\Sigma} \Big\llangle d \, \kappa(\mathbf{x}) \mathrm{grad}[\lambda(\mathbf{x})]\Big\rrangle
\bullet 
\Big(\underbrace{ 
\llbracket\vartheta(\mathbf{x};\kappa(\mathbf{x}))\rrbracket}_{Eq.~\eqref{Eqn:Sensitivity_temp_jump_condition}}\Big)^{\prime} \, \mathrm{d} \Gamma \notag \\
&\quad \quad \quad +\frac{1}{f_0} \int_{\partial \Omega} \Big(\lambda(\mathbf{x}) -  \vartheta_{\mathrm{amb}}\Big) \, \Big(\underbrace{-d \, \kappa(\mathbf{x}) \mathrm{grad}[\vartheta(\mathbf{x};\kappa(\mathbf{x}))] \bullet \widehat{\mathbf{n}}(\mathbf{x})}_{Eq.~\eqref{Eqn:Sensitivity_q_BC}}\Big)^{\prime} \, \mathrm{d} \Gamma \notag \\
&\quad \quad \quad -\frac{\chi}{f_0} \Big(\big\llangle\lambda(\mathbf{x})\big\rrangle - \vartheta_{\mathrm{amb}}\Big) \,  \Big(\underbrace{\big\llangle\vartheta(\mathbf{x};\kappa(\mathbf{x}))\big\rrangle - \vartheta_{\mathrm{amb}}}_{Eq.~\eqref{Eqn:Sensitivity_inlet_BC}}\Big)^{\prime} \Big|_{\mathrm{inlet}}
\end{align}
Similar to the derivation provided in the previous section and appendix, the above expression can be rewritten as follows:
\begin{align}
\label{Eqn:Sentivities_Expression_collected_terms}
D \Phi[k(\mathbf{x})] 
&= -\frac{1}{f_0} \int_{\Omega} d \, 
\mathrm{grad}[\lambda(\mathbf{x})] \bullet  
\mathrm{grad}[\vartheta(\mathbf{x};\kappa(\mathbf{x}))] \, \mathrm{d} \Omega  \notag \\ 
&\qquad +  \frac{1}{f_0} \int_{\Omega}
\vartheta^{\prime}(\mathbf{x};\kappa(\mathbf{x})) 
\, 
{\color{purple}
\Big\{d \, \mathrm{div}[\kappa(\mathbf{x}) \, \mathrm{grad}[\lambda(\mathbf{x})]] + f_{0} - h_{T} \, \big(\lambda(\mathbf{x}) - \vartheta_{\mathrm{amb}}\big)\Big\}
}
\, \mathrm{d} \Omega \notag \\
&\qquad - \frac{1}{f_0} \int_{\Sigma} \big\llangle \vartheta^{\prime}(\mathbf{x};\kappa(\mathbf{x}))\big\rrangle \, 
{\color{purple}
\Big\{
\big\llbracket d \, \kappa(\mathbf{x}) \mathrm{grad}[\lambda(\mathbf{x})] \big\rrbracket 
- \chi \, \big\llangle \mathrm{grad}[\lambda(\mathbf{x})] \bullet
\widehat{\mathbf{t}}(\mathbf{x}) \big\rrangle 
\Big\}
}
\, \mathrm{d} \Gamma \notag \\
&\qquad + \frac{1}{f_0} \int_{\Sigma} \big\llangle d \, \kappa(\mathbf{x}) \mathrm{grad}[\vartheta^{\prime}(\mathbf{x};\kappa(\mathbf{x}))]\big\rrangle \bullet 
{\color{purple}
\Big\{\llbracket \lambda(\mathbf{x}) - \lambda_{\mathrm{amb}}\rrbracket \Big\} 
}
\, \mathrm{d} \Gamma \notag \\
&\qquad + \frac{1}{f_0} \int_{\partial \Omega} \vartheta^{\prime}(\mathbf{x};\kappa(\mathbf{x})) \, 
{\color{purple}
\Big\{-d \, 
\kappa(\mathbf{x}) \mathrm{grad}[\lambda(\mathbf{x})] 
\bullet \widehat{\mathbf{n}}(\mathbf{x}) \Big\}
}
\, \mathrm{d} \Gamma \notag \\
&\qquad - \frac{\chi}{f_0}
\big\llangle \vartheta^{\prime}(\mathbf{x};\kappa(\mathbf{x})) \big \rrangle \, 
{\color{purple}
\Big\{\big\llangle\lambda(\mathbf{x}) 
\big\rrangle - \vartheta_{\mathrm{amb}}\Big\} \, \Big\vert_{s = 1}
}
\end{align}

The corresponding adjoint state problem is obtained by forcing all the terms in parenthesis of Eq.~\eqref{Eqn:Sentivities_Expression_collected_terms} to be zero: 
\begin{subequations}
\begin{alignat}{2}
    \label{Eqn:ROM_Sensitivity_BoE_AP_kappa} 
    -&d \, \mathrm{div}[\kappa(\mathbf{x})\mathrm{grad}[\lambda(\mathbf{x})]] = f_{0}
    - h_{T} \big(\lambda(\mathbf{x}) - \vartheta_{\mathrm{amb}}\big) 
    && \quad \mathrm{in} \; \Omega \\
    \label{Eqn:ROM_Sensitivity_q_jump_condition_AP_kappa} 
    -&d \, \llbracket\kappa(\mathbf{x}) \mathrm{grad}[\lambda(\mathbf{x})]\rrbracket = -\chi \,  \big\llangle\mathrm{grad}[\lambda(\mathbf{x})] \bullet \widehat{\mathbf{t}}(\mathbf{x}) 
    \big \rrangle 
    && \quad \mathrm{on} \; \Sigma \\
    \label{Eqn:ROM_Sensitivity_temp_jump_condition_AP_kappa} 
    &\llbracket\lambda(\mathbf{x}) \rrbracket = 0  
    && \quad \mathrm{on} \; \Sigma \\
    \label{Eqn:ROM_Sensitivity_q_BC_AP_kappa} 
    -&d \, \widehat{\mathbf{n}}(\mathbf{x}) \bullet \kappa(\mathbf{x})\mathrm{grad}[\lambda(\mathbf{x})] = 0 
    && \quad \mathrm{on} \; \partial \Omega \\
    \label{Eqn:ROM_Sensitivity_inlet_BC_AP_kappa} 
    &\big\llangle\lambda(\mathbf{x})\big\rrangle = \vartheta_{\mathrm{amb}} 
    && \quad \mathrm{at} \, s = 1 \, \mathrm{on} \, \Sigma 
\end{alignat}
\end{subequations}
If $\lambda(\mathbf{x})$ satisfies the adjoint problem, the sensitivity \eqref{Eqn:Sentivities_Expression_collected_terms} takes the following compact form:
\begin{align}
    \label{Eqn:ROM_Sensitivity_wrt_conductivity}
    D\Phi[\kappa(\mathbf{x})] 
    &= - \frac{1}{f_0} \int_{\Omega} d \, \mathrm{grad}[\lambda(\mathbf{x})] 
    \bullet \mathrm{grad}[\vartheta(\mathbf{x};\kappa(\mathbf{x}))] \, 
    \mathrm{d} \Omega 
\end{align}
The solution to the adjoint variable is again 
the temperature field under the reverse flow conditions: 
\begin{align}
    \lambda(\mathbf{x}) = \vartheta^{(r)}(\mathbf{x}) 
\end{align}
Thus, the sensitivity of the mean surface temperature to the host's thermal conductivity is:
\begin{align}
    \label{Eqn:Sensitivity_wrt_conductivity_modified}
    D\Phi[\kappa(\mathbf{x})] 
    &= -\frac{1}{f_0}\int_{\Omega} d \, \mathrm{grad}\left[\vartheta^{(r)}(\mathbf{x})\right] 
    \bullet \mathrm{grad}\left[\vartheta^{(f)}(\mathbf{x})\right] \, 
    \mathrm{d} \Omega  
\end{align}
It is instructive to write the above equation in terms of the heat flux vector: 
\begin{align}
    \label{Eqn:Sensitivity_wrt_conductivity_heat_flux_vector}
    D\Phi[\kappa(\mathbf{x})] 
    &= -\frac{1}{f_0} \int_{\Omega} 
    \frac{d}{\kappa^{2}(\mathbf{x})} \, 
    \mathbf{q}^{(r)}(\mathbf{x}) \bullet 
    \mathbf{q}^{(f)}(\mathbf{x}) \;
    \mathrm{d} \Omega  
\end{align}
where the heat flux vector fields under the forward and reverse flow conditions take the following form: 
\begin{align}
    \mathbf{q}^{(f)}(\mathbf{x}) = - \kappa(\mathbf{x}) 
    \, \mathrm{grad}[\vartheta^{(f)}(\mathbf{x})]
    \; \mathrm{and} \; 
    \mathbf{q}^{(r)}(\mathbf{x}) = - \kappa(\mathbf{x}) 
    \, \mathrm{grad}[\vartheta^{(r)}(\mathbf{x})]
\end{align}
Expression \eqref{Eqn:Sensitivity_wrt_conductivity_heat_flux_vector} suggests that $D\Phi[\kappa(\mathbf{x})]$ can be positive or negative depending on whether $\mathbf{q}^{(f)}(\mathbf{x})$ opposes $\mathbf{q}^{(r)}(\mathbf{x})$, at least in a significant portion of the domain. This observation further indicates that the trend---variation of the sensitivity with thermal conductivity---might not be monotonic. Needless to say, the exact expressions for $\vartheta^{(f)}(\mathbf{x})$ and $\vartheta^{(r)}(\mathbf{x})$---hence $\mathbf{q}^{(f)}(\mathbf{x})$ and $\mathbf{q}^{(r)}(\mathbf{x})$---are not known \emph{a priori}. We, therefore, resort to numerics for establishing the remarked trend.

\section{NUMERICAL VERIFICATION}
\label{Sec:S6_Sensitivity_NR}
We now verify numerically the theoretical findings presented in the previous two sections. All numerical results are generated by implementing the single-field Galerkin formulation using the weak form capability in \citet{COMSOL}. The Galerkin formulation corresponding to the boundary value problem \eqref{Eqn:Sensitivity_BoE}--\eqref{Eqn:Sensitivity_inlet_BC} reads: Find $\vartheta(\mathbf{x}) \in \mathcal{U}$ such that we have 
\begin{align}
    \int_{\Omega} d \; \mathrm{grad}[\delta\vartheta(\mathbf{x})] 
    \bullet \kappa(\mathbf{x}) \,  \mathrm{grad}[\vartheta(\mathbf{x})] 
    \; \mathrm{d} \Omega 
    + \int_{\Omega} \delta \vartheta(\mathbf{x}) 
    \, h_T \, (\vartheta(\mathbf{x}) - \vartheta_{\mathrm{amb}}) 
    \; \mathrm{d} \Omega \notag \\
    + \int_{\Sigma} \delta \vartheta(\mathbf{x}) 
    \, \chi \, 
    \mathrm{grad}[\vartheta(\mathbf{x})] \bullet 
    \widehat{\mathbf{t}}(\mathbf{x}) \;  
    \mathrm{d} \Gamma 
    = \int_{\Omega} \delta \vartheta(\mathbf{x}) 
    \, f(\mathbf{x}) 
    \; \mathrm{d} \Omega 
    \quad \forall \delta\vartheta(\mathbf{x}) 
    \in \mathcal{W} 
\end{align}
where the function spaces are defined as follows: 
\begin{subequations}
\begin{align}
\mathcal{U} &:= \left\{\vartheta(\mathbf{x}) \in H^{1}(\Omega) \; \vert \; \vartheta(\mathbf{x}) = \vartheta_{\mathrm{inlet}} = \vartheta_{\mathrm{amb}} 
\; \mathrm{at} \; s = 0 \; \mathrm{on} \; \Sigma \right\} \\
\mathcal{W} &:= \left\{\delta \vartheta(\mathbf{x}) \in H^{1}(\Omega) \; \vert \; \delta \vartheta(\mathbf{x}) = 0 \; \mathrm{at} \; s = 0 \; \mathrm{on} \; \Sigma \right\} 
\end{align}
\end{subequations}
In the above definitions, $H^{1}(\Omega)$ denotes the standard Sobolev space comprising all functions defined over $\Omega$ that are square-integrable alongside their derivatives \citep{ciarlet1978finite}. We have used quadratic Lagrange triangular elements in all the numerical simulations reported in this paper. The chosen meshes were fine enough to resolve steep gradients across the vasculature.

Table \ref{Table:Sensitivity_Simulation_parameters} lists the simulation parameters. The values we have chosen for the dimensions and input parameters (e.g., ambient temperature, volumetric flow rate) are common and reported in several experimental active-cooling studies \citep{devi2021microvascular,pejman2019gradient}. Also, we have shown results spanning three host material systems---glass fiber-reinforced plastic (GFRP) composite, carbon fiber-reinforced plastic (CFRP) composite, and Inconel (an additive manufacturing nickel-based alloy). These materials are popular among microvascular active-cooling applications. 

\subsection{U-shaped vasculature}
Guided by Eq.~\eqref{Eqn:Sensitivity_wrt_conductivity_heat_flux_vector}, we devise a problem that shows  $D\Phi[\kappa]$ can be positive or negative. The train of thought is: we choose a vasculature comprising two parallel segments. If the spacing between them is small, the heat transfers from one segment to the other—the flowing fluid in a part of the vasculature loses net heat. In such a case, swapping the inlet and outlet will make the heat flux vector under the reverse flow conditions oppose that under the forward flow conditions, making $D\Phi[\kappa]$ positive. A concomitant manifestation will be a non-monotonic temperature variation along the vasculature.

\textbf{Figure \ref{Fig:Sensitivity_Ushaped_vasculature}} provides a pictorial description of one such problem. The corresponding heat flux vectors are shown in \textbf{Fig.~\ref{Fig:Sensitivity_Ushaped_Heat_flux_vector}}. Clearly, under the forward flow conditions with close spacing ($l = 20 \; \mathrm{mm}$), the heat flows from the right vertical segment, connected to the outlet, to the left vertical segment (which is connected to the inlet). Further, the heat flux vectors under the two flow conditions oppose each other in the region between these two parallel segments, and the magnitude of the heat flux vector in this sandwiched region is large. Henceforward, we refer to the heat transfer from one segment to another along the vasculature as \emph{countercurrent heat exchange}.

Depending on the strength of the countercurrent heat exchange, increasing the host material's conductivity need not result in a monotonic variation of either the mean surface temperature or thermal efficiency, as conveyed in  \textbf{Fig.~\ref{Fig:Sensitivity_Ushaped_Effect_of_k}}. On the other hand,  \textbf{Fig.~\ref{Fig:Sensitivity_Ushaped_Effect_of_Q}} shows that increasing the flow rate invariably decreases the mean surface temperature, despite countercurrent heat exchanges---verifying once more Theorem \ref{Thm:Sensitivity_DPhiChi}. Nonetheless, what factors---material, geometric and input parameters---promote or hinder countercurrent heat exchange is yet to be studied: worthy of a separate investigation. 

\begin{figure}[h]
    \centering
    \includegraphics[scale=0.6]{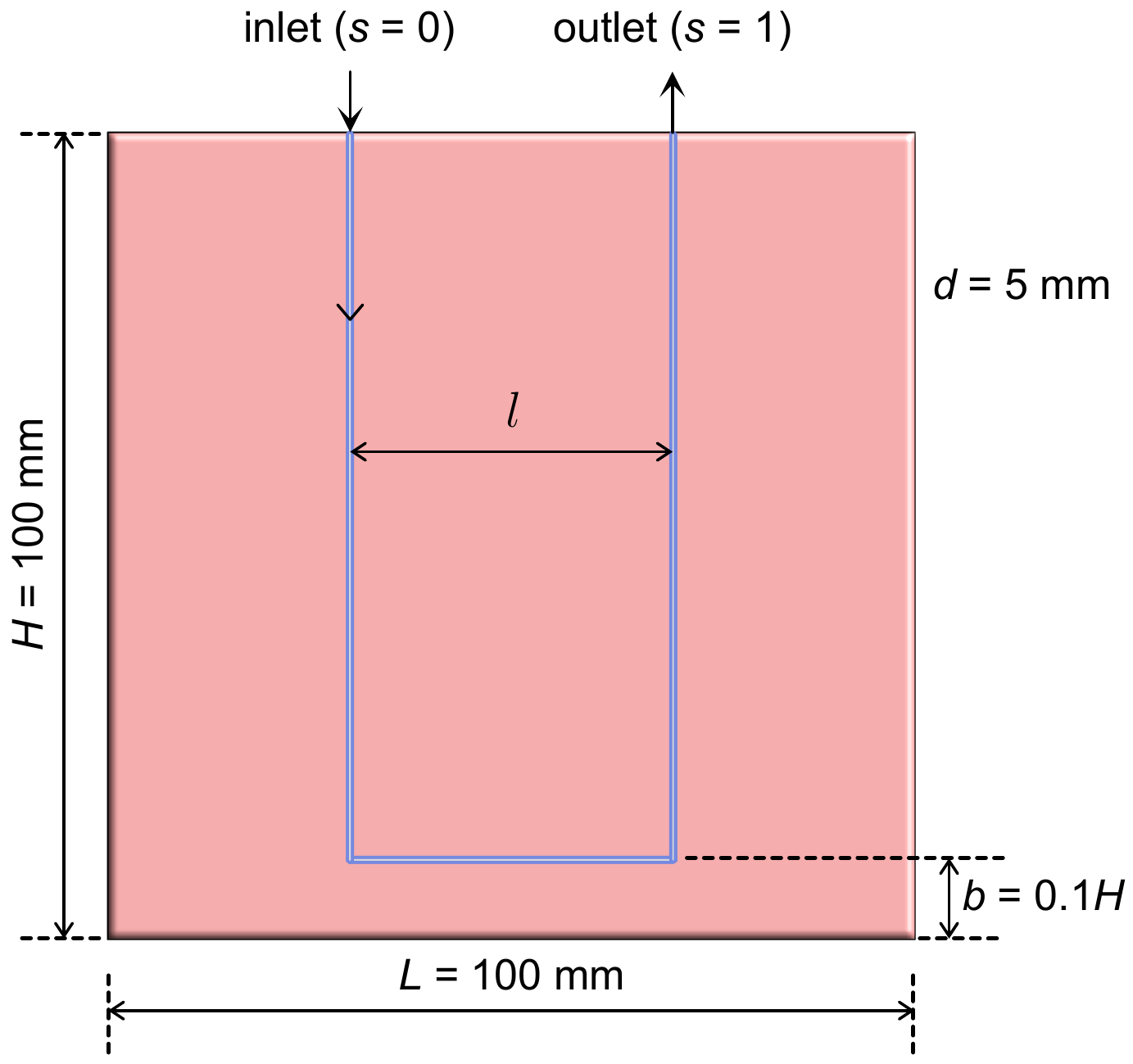}
    \caption{\textsf{U-shaped vasculature: Problem description.} The domain is a square $(100 \, \mathrm{mm} \times 100 \, \mathrm{mm})$ with thickness $d = 5 \, \mathrm{mm}$ and contains a U-shaped vasculature. Fluid flows through the vasculature. The inlet and outlet locations are indicated for forward flow conditions; these locations are swapped for reverse flow. $\vartheta_{\mathrm{inlet}} = \vartheta_{\mathrm{amb}}$ is prescribed at the inlet. A heat source supplies a uniform power: $f(\mathbf{x}) = f_0$, and the entire boundary is adiabatic. $l$ denotes the spacing between the two vertical segments. For small values of $l$, countercurrent heat exchange---heat transfer from one vertical segment to the other---can be dominant.
    \label{Fig:Sensitivity_Ushaped_vasculature}}
\end{figure}

\begin{figure}[htbp]
    \centering
    \includegraphics[scale=0.85]{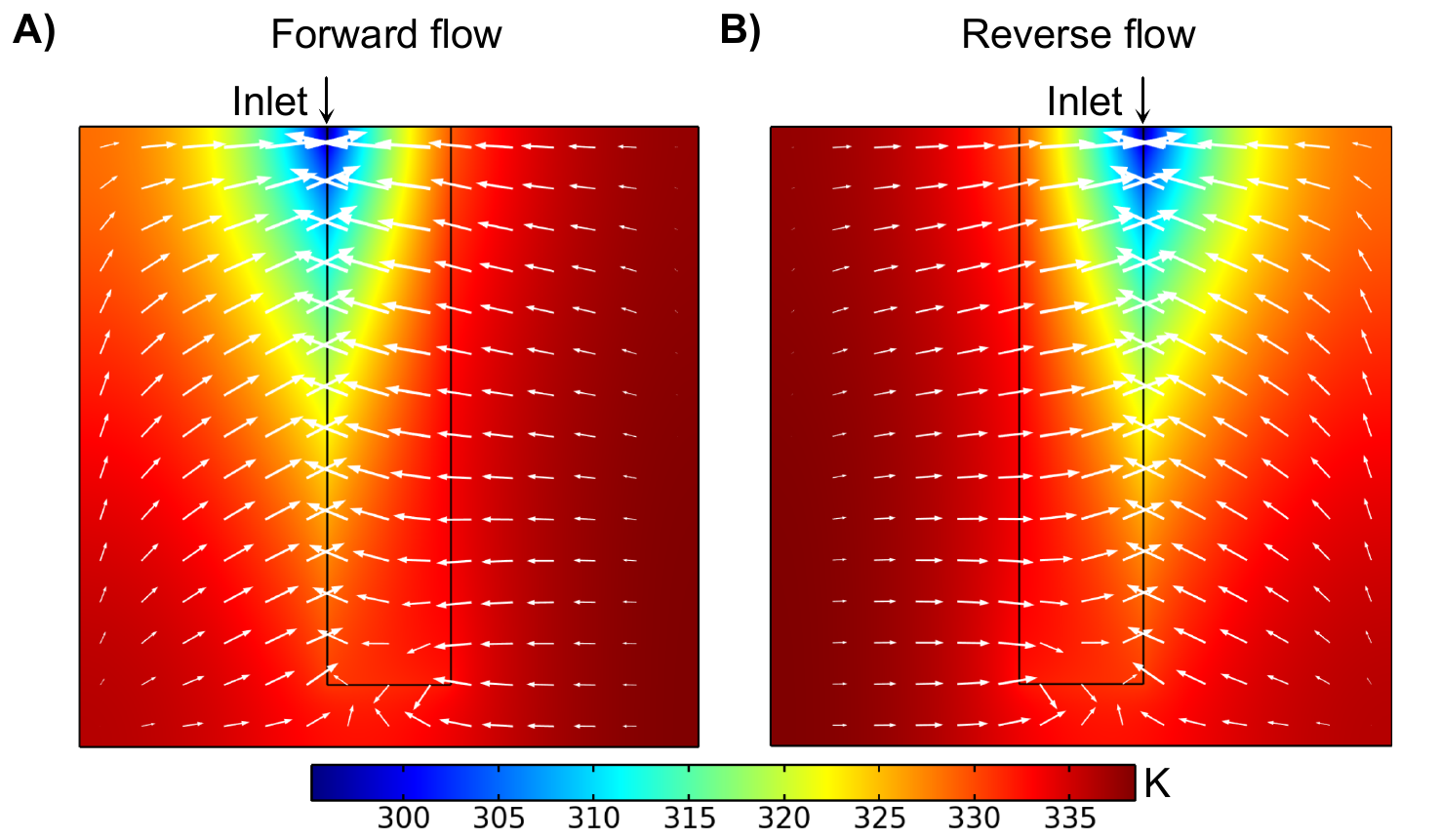}
    \caption{\textsf{U-shaped vasculature: Heat flux vector.} The temperature field superposed with heat flux vectors (white arrows) are shown under forward and reverse flow conditions for $l = 20 \; \mathrm{mm}$. The results are for CFRP with $Q = 1 \, \mathrm{mL/min}$, pumped at the inlet (indicted in the plots). \emph{Observation:} Under both the flow conditions, the heat flux vectors are large in magnitude in the region sandwiched between the vertical segments of the vasculature and, notably, oppose each other.}
    \label{Fig:Sensitivity_Ushaped_Heat_flux_vector}
\end{figure}

\begin{figure}[h]
    \centering
    \includegraphics[scale=0.9]{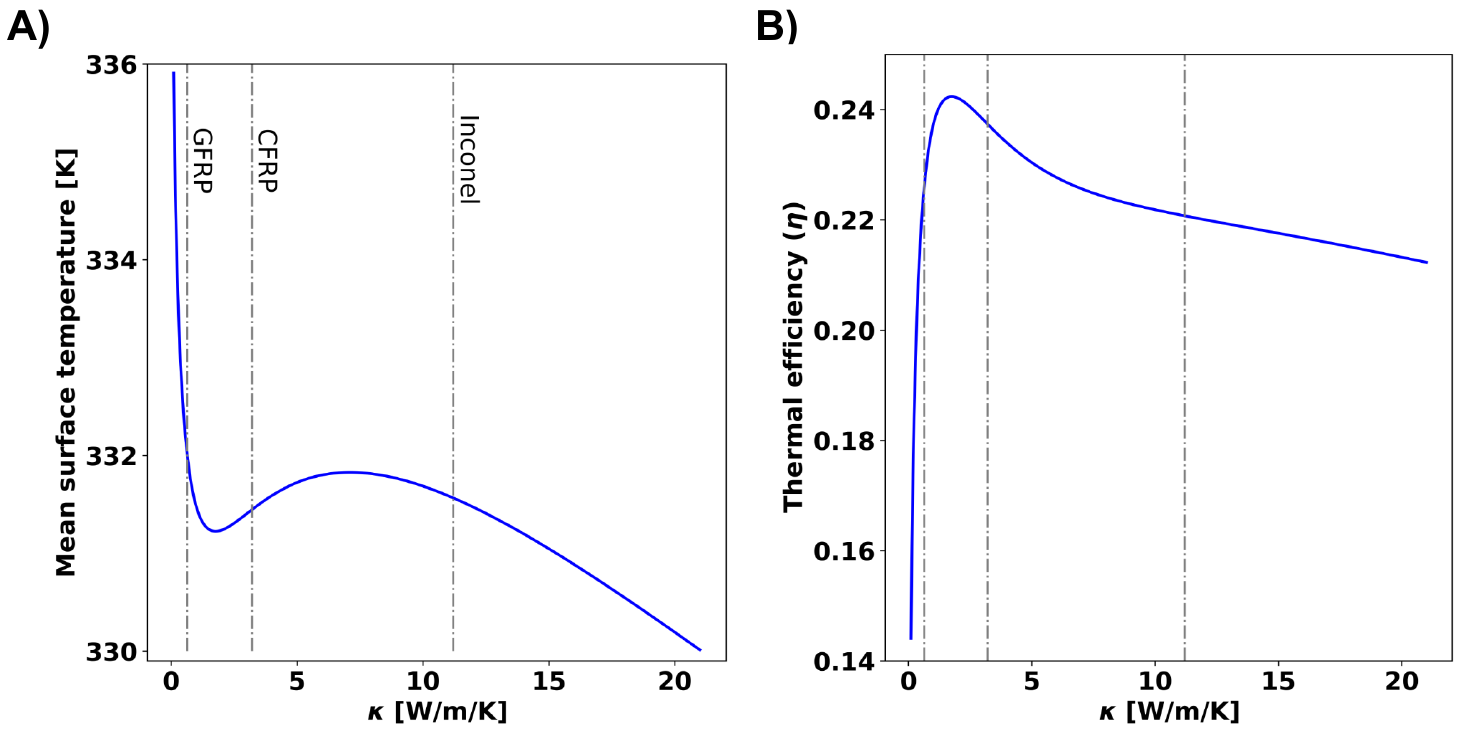}

    \caption{\textsf{U-shaped vasculature: Sensitivity to thermal conductivity.} The variation of the mean surface temperature (A) and thermal efficiency (B) with conductivity are shown for CFRP with $Q = 1 \; \mathrm{mL/min}$. \emph{Inference:}  Increasing thermal conductivity need not result in a monotonic change in MST and thermal efficiency. Depending on the presence and strength of countercurrent heat exchange (see Fig.~\ref{Fig:Sensitivity_Ushaped_Heat_flux_vector}), the heat flux vectors under forward and reverse flow conditions can oppose or align. Consequently, the sensitivity of MST to conductivity, given by expression \eqref{Eqn:Sensitivity_DPhi_chi}, can be either positive or negative.}
    \label{Fig:Sensitivity_Ushaped_Effect_of_k}
\end{figure}

\begin{figure}[h]
    \centering
    \includegraphics[scale=1]{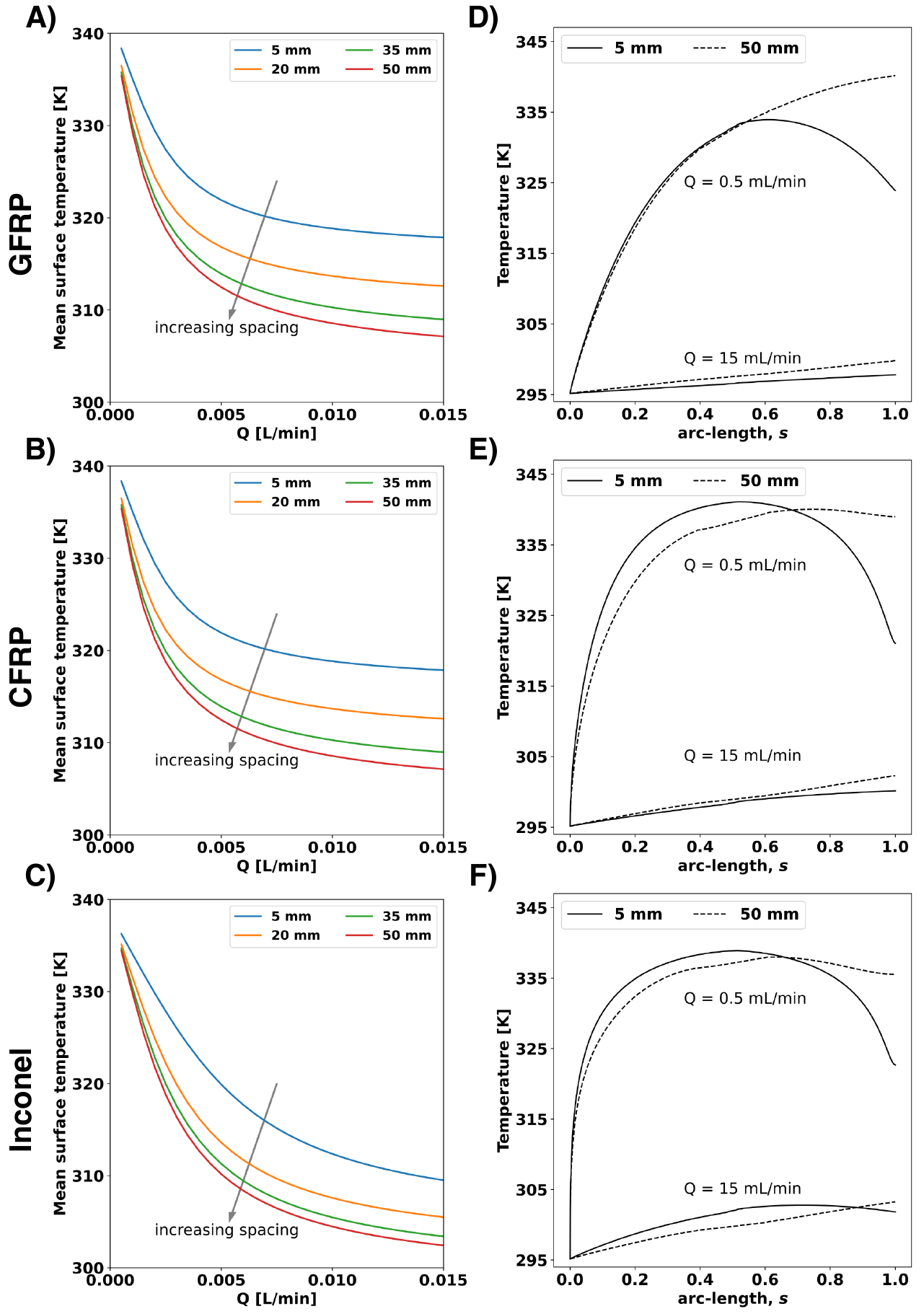}
    \caption{\textsf{U-shaped vasculature: Sensitivity to volumetric flow rate $(Q)$.} The plots on the left panel show the variation of the mean surface temperature with $Q$ for three material systems (GFRP, CFRP, Inconel) and for various spacings $l$. The plots on the right panel report the corresponding temperature variation along the vasculature for different values of $Q$ and $l$. \emph{Inference:} The MST decreases monotonically as $Q$ increases, despite a non-monotonic temperature variation along the vasculature (cf. $Q = 0.5 \; \mathrm{mL/min}$ and $l = 5 \; \mathrm{mm}$). These results verify Theorem \ref{Thm:Sensitivity_DPhiChi}.}
    \label{Fig:Sensitivity_Ushaped_Effect_of_Q}
\end{figure}

\begin{table}[h]
\caption{This table lists the parameters used in numerical simulations. Three host solids are studied: glass fiber-reinforced plastic (GFRP) composite, carbon fiber-reinforced plastic (CFRP) composite, and Inconel 718---an additive manufacturing metal. The fluid is distilled water. The material properties are taken from the literature \citep{devi2021microvascular}. \label{Table:Sensitivity_Simulation_parameters}}
\begin{tabular}{ll}\hline 
Length of the domain $L$ & 100 mm \\
Height of the domain $H$ & 100 mm \\
Thickness $d$ & 5 mm \\
Host solid's thermal conductivity $\kappa$ & 
$\left\{\begin{array}{l} 
0.6360 \; \mathrm{W/m/K} \; \mathrm{(GFRP)} \\
3.2110 \; \mathrm{W/m/K} \; \mathrm{(CFRP)} \\
11.2 \; \mathrm{W/m/K} \; \mathrm{(Inconel, In718)} \\ 
\mbox{or parametric sweep} \\
\end{array} \right.$
\\
Applied heater power $f_0$ & 1000 $\mathrm{W/m^2}$ \\ 
Heat transfer coefficient $h_T$ & 21 $\mathrm{W/m^2/K}$ \\ 
Ambient temperature $\vartheta_{\mathrm{amb}}$ & 295.15 K 
(22 $^\circ$C) \\
Inlet temperature $\vartheta_{\mathrm{Inlet}}$ & $\vartheta_{\mathrm{amb}}$ \\ 
Specific heat capacity of the fluid $c_f$ & 4183 $\mathrm{J/kg/K}$ \\
Density of the fluid $\rho_f$ & 1000 $\mathrm{kg/m^3}$ \\ 
Volumetric flow rate $Q$ & varies (see individual problem descriptions) \\ 
\hline 
\end{tabular}
\end{table}

\subsection{Straight channel}
To verify further the role of countercurrent heat exchange, we consider a straight channeled vasculature, illustrated in \textbf{Fig.~\ref{Fig:Sensitivity_Straight_vasculature}}. The lack of nearby segments implies there will be no countercurrent heat exchange. The heat flux vectors under the forward and reverse flow conditions align (more or less) in the same direction, as exhibited in \textbf{Fig.~\ref{Fig:Sensitivity_Straight_Heat_flux_vector}}. So, the mean surface temperature should decrease monotonically as the thermal conductivity increases, verified for various flow rates in \textbf{Fig.~\ref{Fig:Sensitivity_Straight_Effect_of_k}}. \textbf{Figure \ref{Fig:Sensitivity_Straight_Effect_of_Q}} verifies Theorem \ref{Thm:Sensitivity_DPhiChi}: Increasing the volumetric flow rate (keeping the fluid fixed) will decrease the mean surface temperature.

\begin{figure}[h]
    \centering
    \includegraphics[scale=0.6]{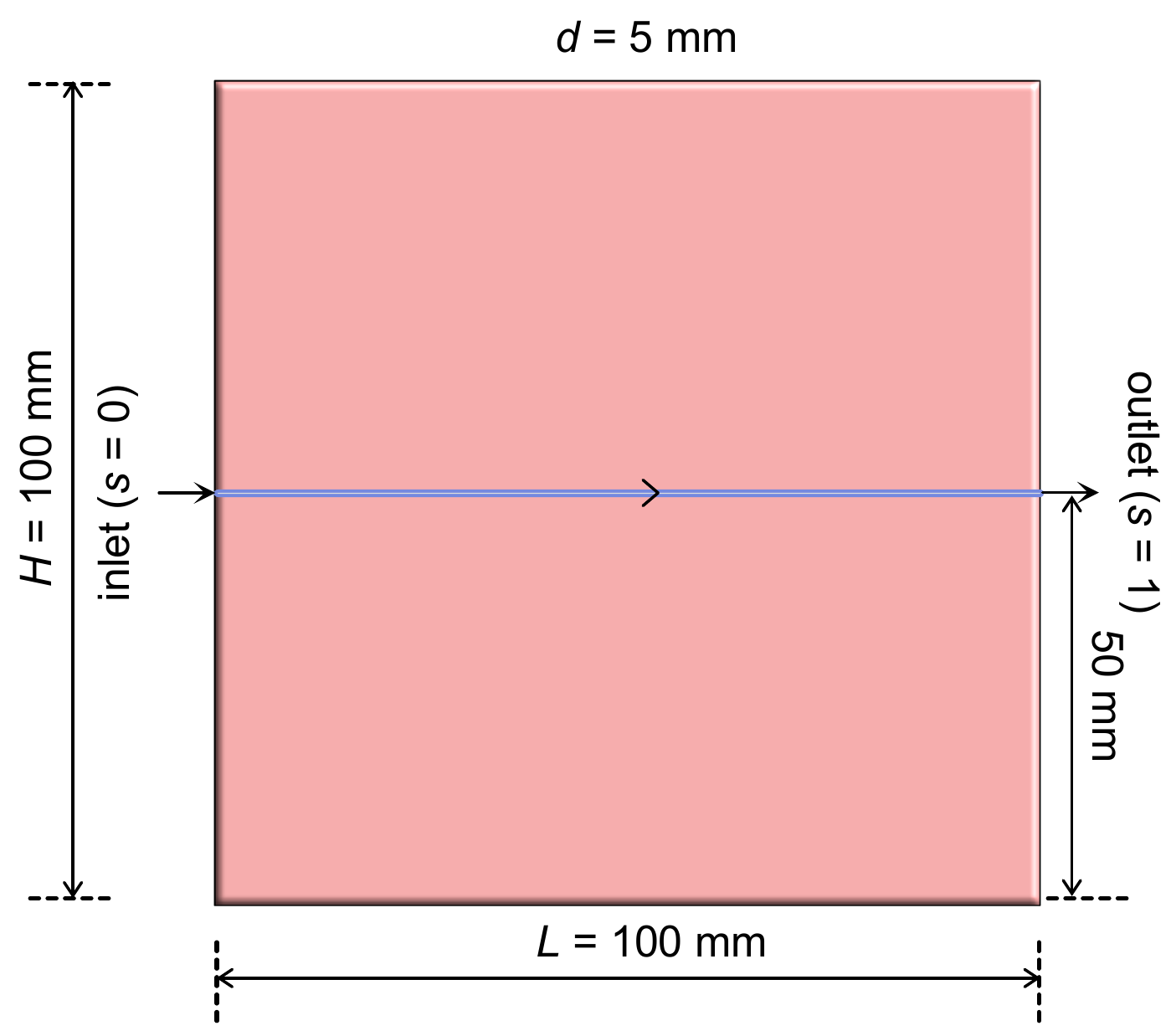}
    \caption{\textsf{Straight channel: Problem description.} The domain is a square $(100 \, \mathrm{mm} \times 100 \, \mathrm{mm})$ with thickness $d = 5 \, \mathrm{mm}$ and contains a straight channel. Fluid flows through the vasculature. The inlet and outlet locations are indicated. $\vartheta_{\mathrm{inlet}} = \vartheta_{\mathrm{amb}}$ is prescribed at the inlet. A heat source supplies a uniform power: $f(\mathbf{x}) = f_0$, and the entire boundary is adiabatic. Because of the absence of parallel segments, there will be no countercurrent heat exchange under this vascular layout. 
    \label{Fig:Sensitivity_Straight_vasculature}}
\end{figure}

\begin{figure}[h]
    \centering
    \includegraphics[scale=0.85]{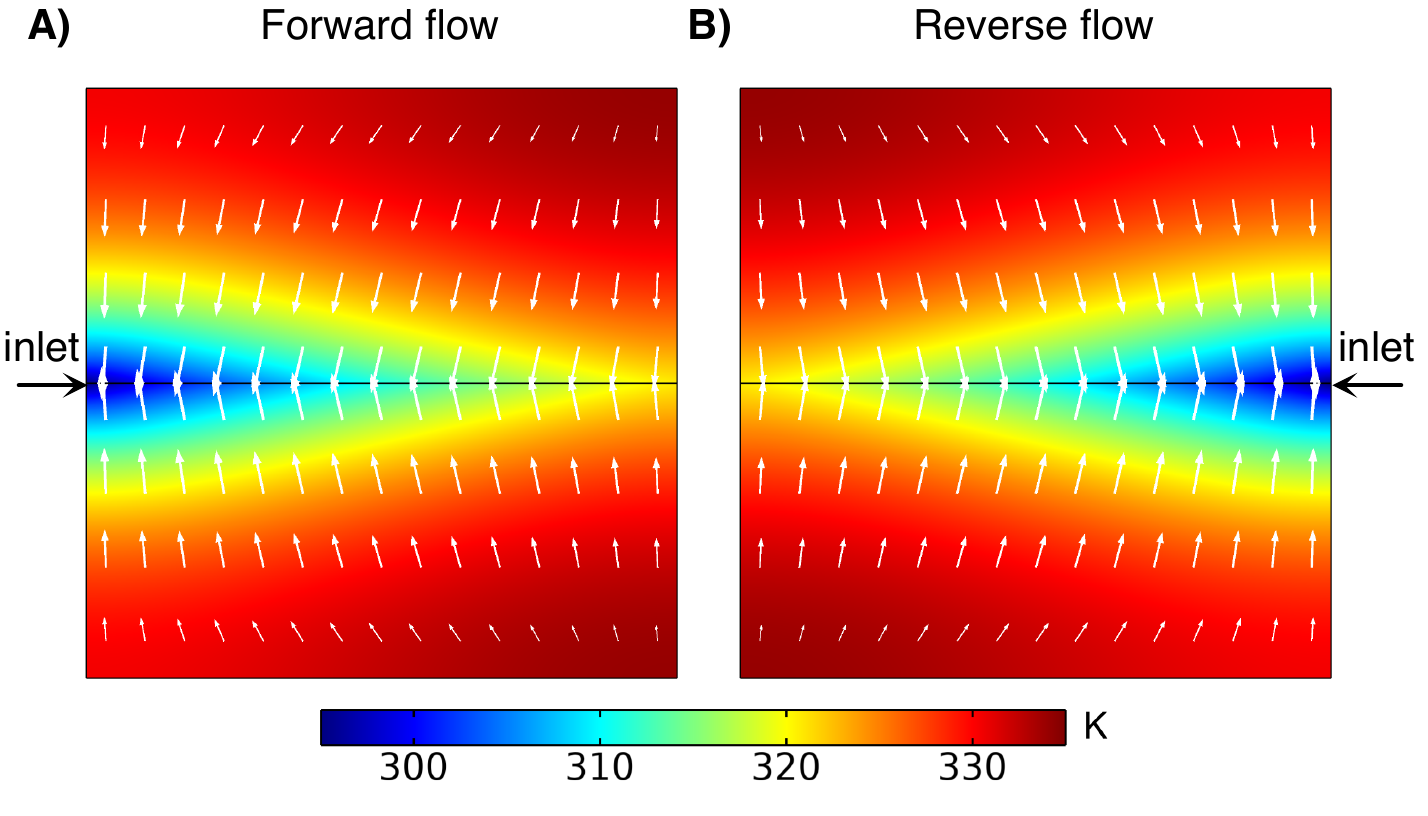}
    \caption{\textsf{Straight channel: Heat flux vector.} The temperature field superposed with heat flux vector (indicated by white arrows) under forward and reverse flow conditions are shown. These results are obtained for CFRP with $Q = 2 \, \mathrm{mL/min}$. \emph{Observation:} The heat flux vectors $\mathbf{q}^{(f)}(\mathbf{x})$ and $\mathbf{q}^{(r)}(\mathbf{x})$, more or less, align with each other. This alignment of heat flux vectors should render $D\Phi[\kappa(\mathbf{x})]$, given by Eq.~\eqref{Eqn:Sensitivity_wrt_conductivity_heat_flux_vector}, to be negative.}
    \label{Fig:Sensitivity_Straight_Heat_flux_vector}
\end{figure}

\begin{figure}
    \centering
    \includegraphics[scale=0.4]{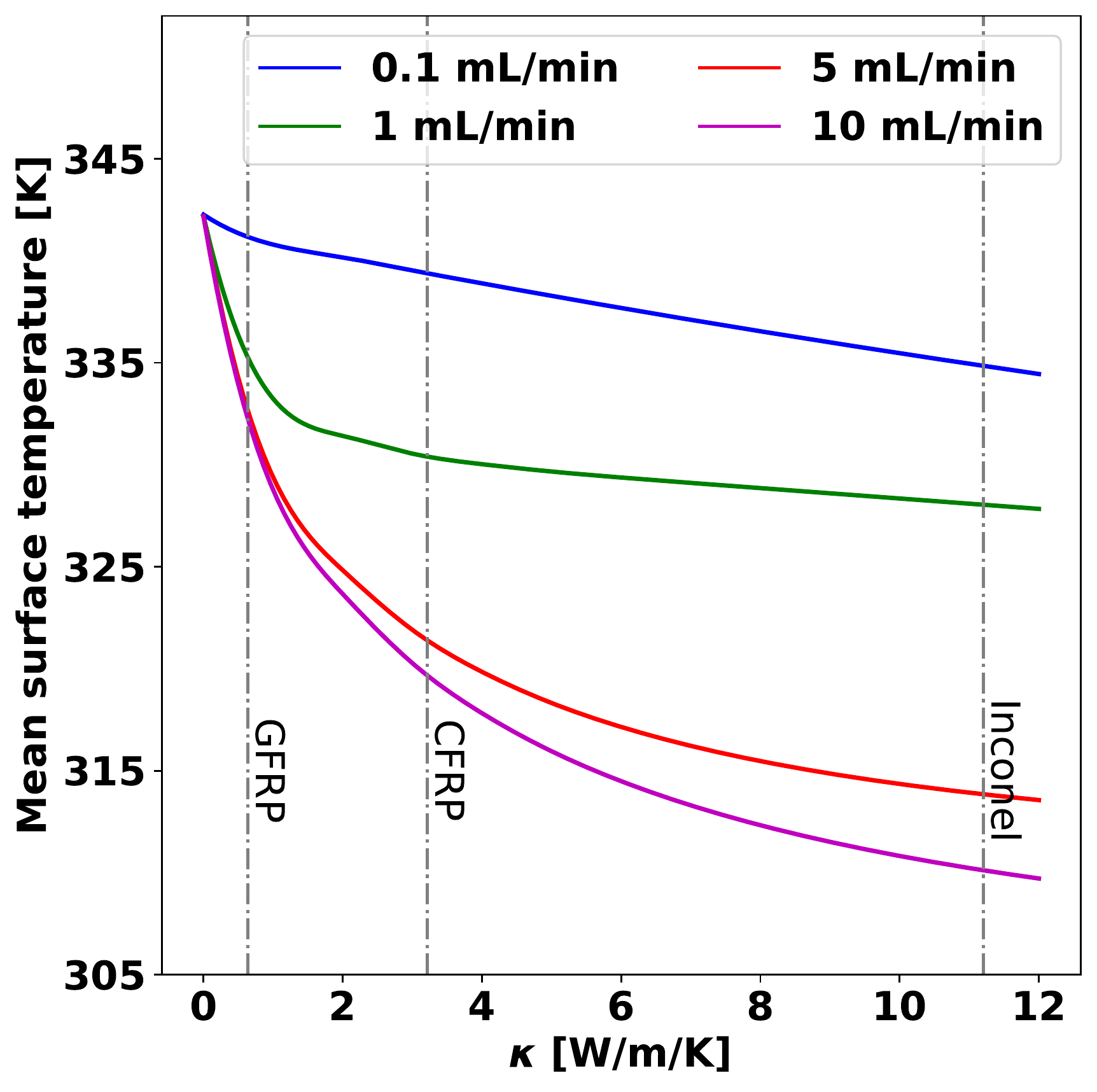}
    \caption{\textsf{Straight channel: Sensitivity to thermal conductivity.} The mean surface temperature decreases monotonically with an increase in conductivity, as the heat flux vectors under the forward and reverse flow conditions more or less align in most of the domain (see Fig.~\ref{Fig:Sensitivity_Straight_Heat_flux_vector}).}
    \label{Fig:Sensitivity_Straight_Effect_of_k}
\end{figure}

\begin{figure}
    \centering
    \includegraphics[scale=1]{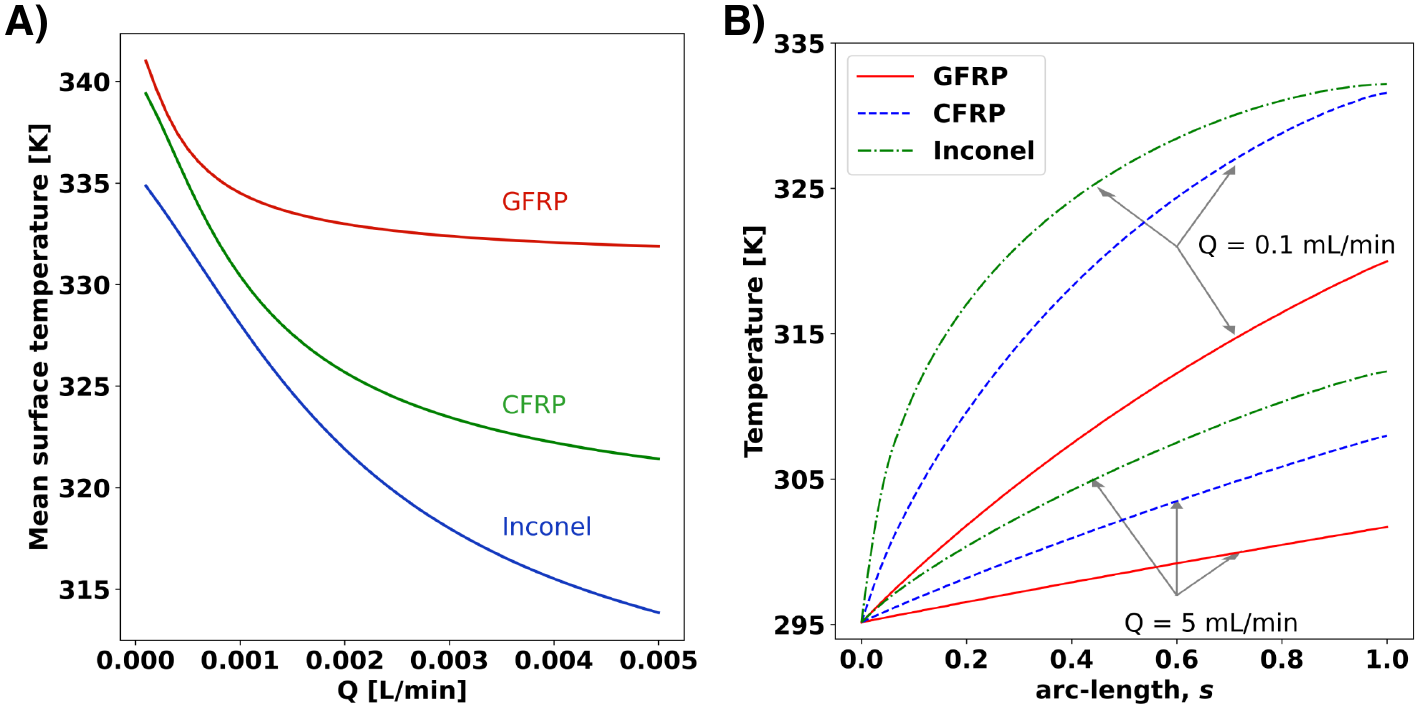}
    \caption{\textsf{Straight channel: Sensitivity to volumetric flow rate.} 
    A) The mean surface temperature decreases monotonically with increasing volumetric flow rate. B) The temperature increases monotonically along the vasculature, indicating that the heat transfers from the host material to the flowing fluid throughout the straight channel. \emph{Inference:} These results are consistent with Theorem \ref{Thm:Sensitivity_DPhiChi}.
    }
    \label{Fig:Sensitivity_Straight_Effect_of_Q}
\end{figure}

\subsection{Serpentine vasculature}
\textbf{Figure \ref{Fig:Sensitivity_Serpentine_vasculature}} depicts the serpentine vasculature layout. Several studies have used this layout (e.g., \citep{devi2021microvascular}) because of its spatial spread, enabling cooling over the entire domain. Clearly, heat transfer under this layout is intricate along the vasculature because of many nearby segments, as portrayed in \textbf{Fig.~\ref{Fig:Sensitivity_Serpentine_Heat_flux_vector}}. On many portions of the vasculature, heat transfers from the host material to flowing fluid on one side and the opposite on the juxtaposed side. Due to this prominent countercurrent heat exchanges, the effect of the thermal conductivity on the mean surface temperature can be multifaceted and depends on the flow rate, as shown in  \textbf{Fig.~\ref{Fig:Sensitivity_Serpentine_Effect_of_k}}. Despite the complex heat transfer map, \textbf{Fig.~\ref{Fig:Sensitivity_Serpentine_Effect_of_Q}} shows that the sensitivity of the mean surface temperature to the volumetric flow rate (for a fixed fluid) is always negative---in agreement with Theorem \ref{Thm:Sensitivity_DPhiChi}.

\begin{figure}[h]
    \centering
    \includegraphics[scale=0.6]{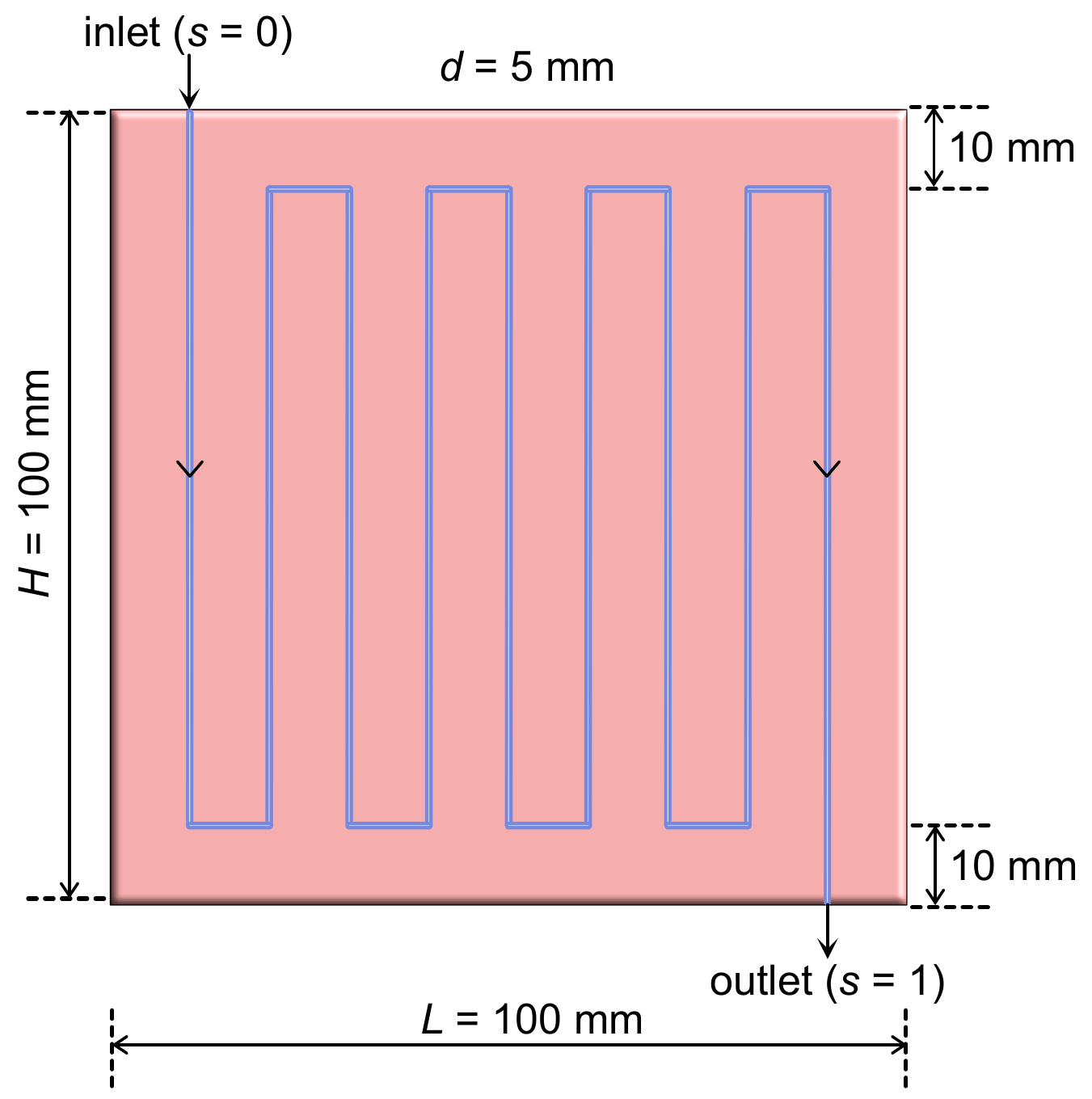}
    \caption{This figure shows a serpentine vasculature with a heat source supplying uniform power to the entire domain. Because of many close parallel segments, there could be notable countercurrent heat exchange across various sections of the vasculature amid active cooling. \label{Fig:Sensitivity_Serpentine_vasculature}}
\end{figure}

\begin{figure}[h]
    \centering
    \includegraphics[scale=0.9]{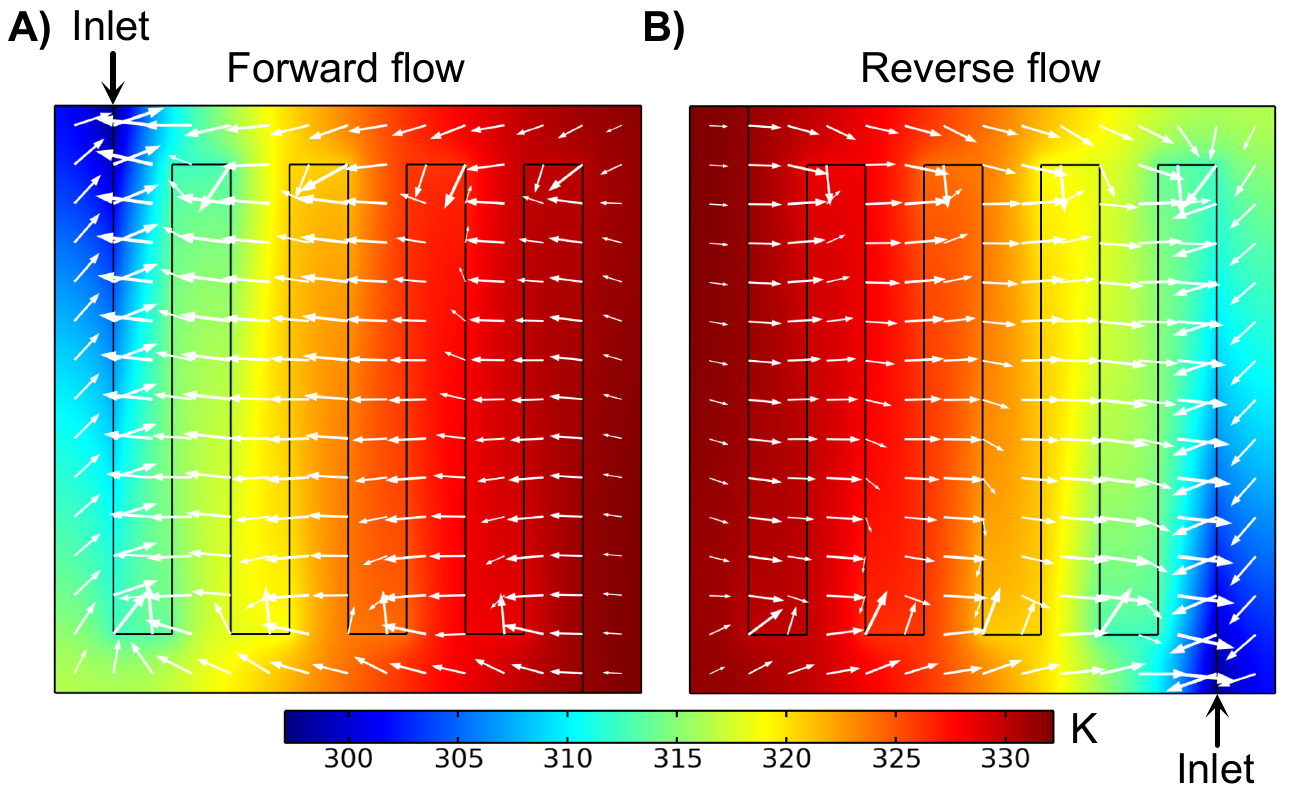}
    \caption{\textsf{Serpentine vasculature: Heat flux vector.} Countercurrent heat exchange is prevalent in the entire domain, as indicated by the (white) arrows denoting the heat flux vector field. The plots in this figure are for CFRP with $Q = 2 \, \mathrm{mL/min}$. Clearly, for this case, the arrows under the forward flow conditions oppose those under the reverse flow conditions, thereby making the sensitivity expression \eqref{Eqn:Sensitivity_wrt_conductivity_modified} positive.}
    \label{Fig:Sensitivity_Serpentine_Heat_flux_vector}
\end{figure}

\begin{figure}[h]
    \centering
    \includegraphics[scale=1.1]{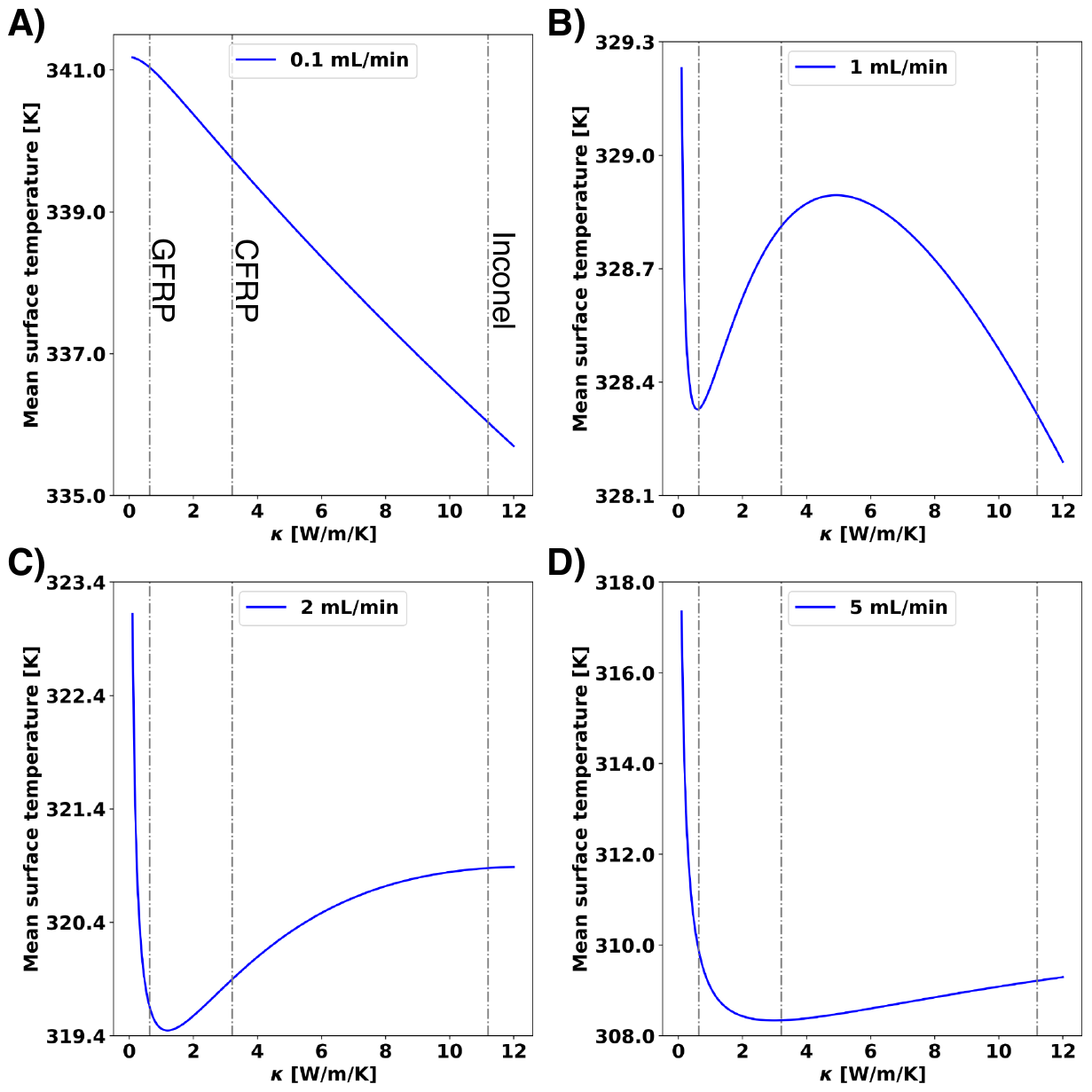}
    \caption{\textsf{Serpentine vasculature: Sensitivity to thermal conductivity}. The variations for various flow rates of the mean surface temperature to the changes in thermal conductivity are shown. Due to the countercurrent heat exchanges, the said variation is not always monotonic but is multifaceted within the range of conductivities often used in active-cooling studies.  \label{Fig:Sensitivity_Serpentine_Effect_of_k}}
\end{figure}

\begin{figure}[h]
    \centering
    \includegraphics[scale=1]{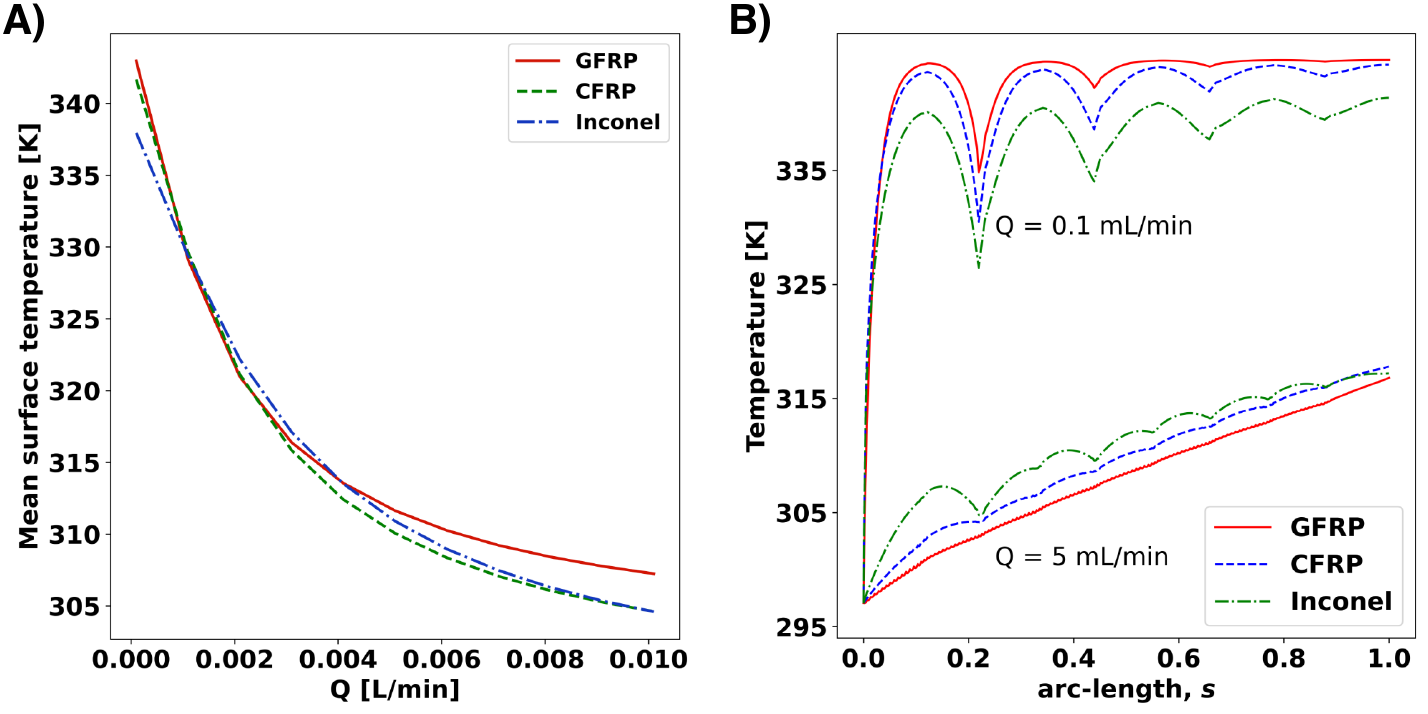}
    \caption{\textsf{Serpentine vasculature: Sensitivity to volumetric flow rate.} A) The variation of $\vartheta_{\mathrm{MST}}$ to the changes in the volumetric flow rate $(Q)$ is shown for various host materials. B) The temperature along the vasculature is plotted, and the variation is non-monotonic, meaning significant countercurrent heat exchanges occur along the vasculature's length. \emph{Inference:} Despite potent countercurrent heat exchange, the mean surface temperature decreases monotonically as $Q$ increases, equivalent to increasing the heat capacity rate for a fixed fluid. These results provide further verification of Theorem \ref{Thm:Sensitivity_DPhiChi}. \label{Fig:Sensitivity_Serpentine_Effect_of_Q}}
\end{figure}

\section{CLOSURE}
\label{Sec:S7_Sensitivity_Closure}

This paper addressed the individual effects of heat capacity rate (product of volumetric flow rate and heat capacity) and thermal conductivity 
on the mean surface temperature (MST) and thermal efficiency. The study avails a reduced-order model for thermal regulation and conducts mathematical analysis based on the adjoint state method (a popular sensitivity analysis approach) and representative numerical simulations. The principal finding on the \emph{sensitivity analysis front} is: the adjoint-state problem is the boundary value problem for the reverse flow conditions (i.e., swapping the inlet and outlet locations). The primary determinations on the \emph{physics front} are: 
\begin{enumerate}[(C1)]
\item Irrespective of the vasculature layout, an increase in the mass flow rate of the circulating fluid decreases MST---meaning that the thermal efficiency increases.
\item However, MST variation is non-monotonic with the host material’s thermal conductivity.
\item The sensitivity of MST to the conductivity is proportional to the weighted inner product of the heat flux vector fields under the forward and reverse flow conditions.
\item When countercurrent heat exchange dominates, these two heat flux vectors oppose each other, thereby making the sensitivity of MST to thermal conductivity positive. The trend can be the opposite if the countercurrent heat exchange is absent or insignificant.
\end{enumerate}

A direct significance of this work is it settles an unresolved fundamental question related to thermal regulation in thin vascular systems: how does the host material's thermal conductivity affect MST? Also, the reported analysis and results (a) enhance our fundamental understanding of vascular-based thermal regulation and (b) provide a clear-cut path to pose material design problems. 

As alluded to in \S\ref{Sec:S6_Sensitivity_NR}, a logical sequel to this study is to address this principal question: what factors (material, geometric and input parameters) promote or hinder countercurrent heat exchange in microvascular active-cooling systems? Further, we envision scientific explorations on two fronts:
\begin{enumerate}
\item An experimental program---validating the identified non-monotonic behavior of the sensitivity of thermal conductivity on the mean surface temperature---will benefit the field. Further, these experiments should realize \emph{countercurrent heat exchange} and confirm its role in the sign change of this sensitivity.
\item On the modeling front, a natural extension is to develop a material design framework and comprehend the resulting designs. Also, researchers should explore alternative objective functionals appropriate to thermal regulation (other than the mean surface temperature) and study the ramifications of such alternatives. The results from this paper provide the necessary impetus to undertake the remarked material design research. 
\end{enumerate}

\appendix

\section{Algebra and calculus of jumps}

\subsection{Algebra}
\label{Subsec:Sensitivity_App_Algebra}
Below we provide proof of identity \eqref{Eqn:Sensitivities_avg_jump_identities_a}. Proof of the other equivalence \eqref{Eqn:Sensitivities_avg_jump_identities_b} follows a similar procedure. 
\begin{proposition}
    \label{Prop:Sensitivity_jump_identity}
    Given a scalar field $\alpha(\mathbf{x})$ and a vector field $\mathbf{a}(\mathbf{x})$, the following identity holds: 
    \begin{align}
        \llbracket 
        \alpha(\mathbf{x}) \, 
        \mathbf{a}(\mathbf{x}) 
        \rrbracket 
        = \llbracket \alpha(\mathbf{x}) \rrbracket 
        \bullet 
        \llangle \mathbf{a}(\mathbf{x}) \rrangle 
        + \llangle \alpha(\mathbf{x}) 
        \rrangle \, \llbracket \mathbf{a}(\mathbf{x}) \rrbracket
    \end{align}
\end{proposition}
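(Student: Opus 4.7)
The proposed approach is a direct expansion: apply the definitions of the jump and average operators to both sides and show they agree term by term, exploiting the antisymmetry relation $\widehat{\mathbf{n}}^{+}(\mathbf{x}) + \widehat{\mathbf{n}}^{-}(\mathbf{x}) = \mathbf{0}$ to dispose of the cross terms.

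\textbf{Step 1 (unpack the left-hand side).} Since $\alpha(\mathbf{x}) \, \mathbf{a}(\mathbf{x})$ is a vector field, I would apply the vector jump definition~\eqref{Eqn:Sensitivity_jump_operator_vector} to obtain
\begin{align*}
    \llbracket \alpha \, \mathbf{a} \rrbracket
    = \alpha^{+} \, \mathbf{a}^{+} \bullet \widehat{\mathbf{n}}^{+}
    + \alpha^{-} \, \mathbf{a}^{-} \bullet \widehat{\mathbf{n}}^{-},
\end{align*}
which will serve as the target expression.

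\textbf{Step 2 (unpack the right-hand side).} Next, I would expand $\llbracket \alpha \rrbracket \bullet \llangle \mathbf{a} \rrangle$ using the scalar jump~\eqref{Eqn:Sensitivity_jump_operator_scalar} and vector average~\eqref{Eqn:Sensitivity_avg_operator_vector}, and similarly expand $\llangle \alpha \rrangle \, \llbracket \mathbf{a} \rrbracket$ using the scalar average~\eqref{Eqn:Sensitivity_avg_operator_scalar} and vector jump~\eqref{Eqn:Sensitivity_jump_operator_vector}. Each product yields four terms with a factor of $1/2$: the two ``diagonal'' terms of the form $\alpha^{\pm}\,\mathbf{a}^{\pm}\bullet\widehat{\mathbf{n}}^{\pm}$, and two ``cross'' terms of the form $\alpha^{\pm}\,\mathbf{a}^{\mp}\bullet\widehat{\mathbf{n}}^{\pm}$ (or with the roles swapped).

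\textbf{Step 3 (cancel the cross terms).} Summing the two contributions, the diagonal terms combine to give exactly $\alpha^{+}\,\mathbf{a}^{+}\bullet\widehat{\mathbf{n}}^{+} + \alpha^{-}\,\mathbf{a}^{-}\bullet\widehat{\mathbf{n}}^{-}$, matching Step~1. The cross terms pair up as
\begin{align*}
    \tfrac{1}{2}\,\alpha^{+}\!\left(\widehat{\mathbf{n}}^{+}\bullet\mathbf{a}^{-} + \mathbf{a}^{-}\bullet\widehat{\mathbf{n}}^{-}\right)
    + \tfrac{1}{2}\,\alpha^{-}\!\left(\widehat{\mathbf{n}}^{-}\bullet\mathbf{a}^{+} + \mathbf{a}^{+}\bullet\widehat{\mathbf{n}}^{+}\right),
\end{align*}
and each parenthesized sum vanishes by the symmetry of the Euclidean dot product together with $\widehat{\mathbf{n}}^{+} = -\widehat{\mathbf{n}}^{-}$. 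Comparing with Step~1 then concludes the proof.

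\textbf{Expected difficulty.} There is no real obstacle here; the identity is essentially algebraic, and the only subtlety is careful bookkeeping of which factor is a scalar and which is a vector (so that the dot products are taken between the appropriate pairs). The cancellation of cross terms via the opposing unit normals is the conceptual core, and it mirrors standard DG-type average/jump calculus.
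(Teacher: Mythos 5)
Your proposal is correct and follows essentially the same route as the paper's own proof: expand both right-hand terms via the definitions of the average and jump operators, combine the diagonal terms into $\alpha^{\pm}\,\mathbf{a}^{\pm}\bullet\widehat{\mathbf{n}}^{\pm}$, and annihilate the cross terms using $\widehat{\mathbf{n}}^{+}(\mathbf{x})+\widehat{\mathbf{n}}^{-}(\mathbf{x})=\mathbf{0}$ together with commutativity of the dot product. The only cosmetic difference is that you expand the left-hand side first and match against it, whereas the paper works from the right-hand side toward $\llbracket \alpha(\mathbf{x})\,\mathbf{a}(\mathbf{x}) \rrbracket$; the substance is identical.
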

\begin{proof}
We use the definitions of the average and jump operators (i.e., Eqs.~\eqref{Eqn:Sensitivity_average_operator} and \eqref{Eqn:Sensitivity_jump_operator}) to expand the first term on the right side of the identity: 
\begin{align}
    \llbracket \alpha(\mathbf{x}) 
    \rrbracket \bullet \llangle 
    \mathbf{a}(\mathbf{x}) \rrangle 
    &= \Big(\alpha^{+}(\mathbf{x}) \,  \widehat{\mathbf{n}}^{+}(\mathbf{x})
    + \alpha^{-}(\mathbf{x}) \,  \widehat{\mathbf{n}}^{-}(\mathbf{x})
    \Big) \bullet 
    \frac{1}{2}
    \Big(\mathbf{a}^{+}(\mathbf{x}) 
    + \mathbf{a}^{-}(\mathbf{x}) \Big) \notag \\
    &= \frac{1}{2} \alpha^{+}(\mathbf{x}) \,  \widehat{\mathbf{n}}^{+}(\mathbf{x})
    \bullet \mathbf{a}^{+}(\mathbf{x})
    + \frac{1}{2} \alpha^{+}(\mathbf{x}) \,  \widehat{\mathbf{n}}^{+}(\mathbf{x})
    \bullet \mathbf{a}^{-}(\mathbf{x}) \notag \\
    &\qquad +\frac{1}{2} \alpha^{-}(\mathbf{x}) \,  \widehat{\mathbf{n}}^{-}(\mathbf{x})
    \bullet \mathbf{a}^{+}(\mathbf{x})
    + \frac{1}{2} \alpha^{-}(\mathbf{x}) \,  \widehat{\mathbf{n}}^{-}(\mathbf{x})
    \bullet \mathbf{a}^{-}(\mathbf{x})
\end{align}
Likewise, expanding the second term on the right side of the identity, we get:
\begin{align}
    \llangle \alpha(\mathbf{x}) 
    \rrangle \, \llbracket
    \mathbf{a}(\mathbf{x}) \rrbracket 
    &= \frac{1}{2}
    \Big(\alpha^{+}(\mathbf{x}) 
    + \alpha^{-}(\mathbf{x}) \Big) \,  \Big(\mathbf{a}^{+}(\mathbf{x}) \bullet  \widehat{\mathbf{n}}^{+}(\mathbf{x})
    + \mathbf{a}^{-}(\mathbf{x}) \bullet   \widehat{\mathbf{n}}^{-}(\mathbf{x})
    \Big) \notag \\
    &= \frac{1}{2} \alpha^{+}(\mathbf{x}) \,  \mathbf{a}^{+}(\mathbf{x})
    \bullet \widehat{\mathbf{n}}^{+}(\mathbf{x})
    + \frac{1}{2} \alpha^{+}(\mathbf{x}) \,  \mathbf{a}^{-}(\mathbf{x})
    \bullet \widehat{\mathbf{n}}^{-}(\mathbf{x}) \notag \\
    &\qquad +\frac{1}{2} \alpha^{-}(\mathbf{x}) \,  \mathbf{a}^{+}(\mathbf{x})
    \bullet \widehat{\mathbf{n}}^{+}(\mathbf{x})
    + \frac{1}{2} \alpha^{-}(\mathbf{x}) \,  \mathbf{a}^{-}(\mathbf{x})
    \bullet \widehat{\mathbf{n}}^{-}(\mathbf{x})
\end{align}
By adding the above two equations and noting the commutative property of the dot product, we get 
\begin{align}
    \llbracket \alpha(\mathbf{x}) 
    \rrbracket \bullet \llangle 
    \mathbf{a}(\mathbf{x}) \rrangle
    + 
    \llangle \alpha(\mathbf{x}) 
    \rrangle \, \llbracket
    \mathbf{a}(\mathbf{x}) \rrbracket 
    &= \alpha^{+}(\mathbf{x}) \,  \mathbf{a}^{+}(\mathbf{x})
    \bullet \widehat{\mathbf{n}}^{+}(\mathbf{x})
    + \alpha^{-}(\mathbf{x}) \,  \mathbf{a}^{-}(\mathbf{x})
    \bullet \widehat{\mathbf{n}}^{-}(\mathbf{x}) \notag \\
    &\qquad + \frac{1}{2} 
    \Big(\alpha^{+}(\mathbf{x}) \,  \mathbf{a}^{-}(\mathbf{x}) 
    + \alpha^{-}(\mathbf{x}) \,  \mathbf{a}^{+}(\mathbf{x})
    \Big) 
    \bullet \Big(\widehat{\mathbf{n}}^{+}(\mathbf{x})  
    + \widehat{\mathbf{n}}^{-}(\mathbf{x}) 
    \Big) 
\end{align}
Noting that $\mathbf{n}^{+}(\mathbf{x}) + \mathbf{n}^{-}(\mathbf{x}) = \mathbf{0}$, we establish the desired result as follows: 
\begin{align}
    \llbracket \alpha(\mathbf{x}) 
    \rrbracket \bullet \llangle 
    \mathbf{a}(\mathbf{x}) \rrangle
    + 
    \llangle \alpha(\mathbf{x}) 
    \rrangle \, \llbracket
    \mathbf{a}(\mathbf{x}) \rrbracket 
    &= \alpha^{+}(\mathbf{x}) \,  \mathbf{a}^{+}(\mathbf{x})
    \bullet \widehat{\mathbf{n}}^{+}(\mathbf{x})
    + \alpha^{-}(\mathbf{x}) \,  \mathbf{a}^{-}(\mathbf{x})
    \bullet \widehat{\mathbf{n}}^{-}(\mathbf{x}) \notag \\
    &= 
    \Big(\alpha(\mathbf{x}) \,  \mathbf{a}(\mathbf{x})\Big)^{+}
    \bullet \widehat{\mathbf{n}}^{+}(\mathbf{x})
    + \Big(\alpha(\mathbf{x}) \,  \mathbf{a}(\mathbf{x})\Big)^{-}
    \bullet \widehat{\mathbf{n}}^{-}(\mathbf{x}) \notag \\
    &= \llbracket \alpha(\mathbf{x}) \, \mathbf{a}(\mathbf{x}) \rrbracket
\end{align}
\end{proof}

\subsection{Calculus}
\label{Subsec:Sensitivity_App_Calculus}
The divergence theorem over $\Omega \setminus \Sigma$ takes the following form: 
\begin{align}
    \label{Eqn:Sensitivity_Divergence_theorem}
    \int_{\Omega \setminus \Sigma} 
    \mathrm{div}[\mathbf{a}(\mathbf{x})] \, \mathrm{d} \Omega 
    = \int_{\partial \Omega} 
    \mathbf{a}(\mathbf{x}) \bullet \widehat{\mathbf{n}}(\mathbf{x}) 
    \, \mathrm{d} \Gamma
    + \int_{\Sigma} \llbracket 
    \mathbf{a}(\mathbf{x}) 
    \rrbracket \, \mathrm{d} \Gamma 
\end{align}
where $\mathbf{a}(\mathbf{x})$ is a vector field. The Green's theorem over $\Omega \setminus \Sigma$ can be written as follows: 
\begin{align}
    \label{Eqn:Sensitivity_Greens_theorem}
    \int_{\Omega \setminus \Sigma} 
    \alpha(\mathbf{x}) \,  \mathrm{div}[\mathbf{a}(\mathbf{x})] \, \mathrm{d} \Omega 
    = \int_{\partial \Omega} 
    \alpha(\mathbf{x}) \, \mathbf{a}(\mathbf{x}) 
    \bullet \widehat{\mathbf{n}}(\mathbf{x}) 
    \, \mathrm{d} \Gamma 
    + \int_{\Sigma} 
    \llbracket 
    \alpha(\mathbf{x}) \, \mathbf{a}(\mathbf{x}) 
    \rrbracket
    \, \mathrm{d} \Gamma \notag \\
    - \int_{\Omega \setminus \Sigma} 
    \mathrm{grad}[\alpha(\mathbf{x})] 
    \bullet \mathbf{a}(\mathbf{x})
    \, \mathrm{d} \Omega  
\end{align}
where $\alpha(\mathbf{x})$ is a scalar field. The above two expressions are valid even if the vasculature $\Sigma$ comprises branches. The following result, an application of Green's theorem \eqref{Eqn:Sensitivity_Greens_theorem}, will be useful in deriving the adjoint state problem.
\begin{proposition}
   \label{Prop:Sensitivity_Weighted_theorem}
   Let $\beta(\mathbf{x})$ and $\gamma(\mathbf{x})$ are two smooth scalar fields over $\Omega \setminus \Sigma$. These fields satisfy 
   {\small
   \begin{align}
        \int_{\Omega \setminus \Sigma} 
        \gamma(\mathbf{x}) \,  \mathrm{div}\big[\kappa(\mathbf{x}) \,  \mathrm{grad}[\beta(\mathbf{x})]\big] \, \mathrm{d} \Omega 
        &= \int_{\partial \Omega} 
        \gamma(\mathbf{x}) \, \kappa(\mathbf{x}) \, \mathrm{grad}[\beta(\mathbf{x})]
        \bullet \widehat{\mathbf{n}}(\mathbf{x}) 
        \, \mathrm{d} \Gamma 
        -\int_{\partial \Omega} 
        \beta(\mathbf{x}) \, \kappa(\mathbf{x}) \, \mathrm{grad}[\gamma(\mathbf{x})]
        \bullet \widehat{\mathbf{n}}(\mathbf{x}) 
        \, \mathrm{d} \Gamma 
        \notag \\
        &\qquad + \int_{\Sigma} 
        \big\llbracket 
        \gamma(\mathbf{x}) 
         \, 
        \kappa(\mathbf{x}) 
         \, \mathrm{grad}[\beta(\mathbf{x})]
        \big\rrbracket
        \, \mathrm{d} \Gamma 
        - \int_{\Sigma} 
        \big\llbracket 
        \beta(\mathbf{x}) 
         \, 
        \kappa(\mathbf{x}) 
         \, \mathrm{grad}[\gamma(\mathbf{x})]
        \big\rrbracket
        \, \mathrm{d} \Gamma
        \notag \\
        &\qquad + \int_{\Omega \setminus \Sigma} 
        \beta(\mathbf{x}) \, 
        \mathrm{div}\big[ \kappa(\mathbf{x}) \, \mathrm{grad}[\gamma(\mathbf{x})] \big]
        \, \mathrm{d} \Omega  
    \end{align}
    }
\end{proposition}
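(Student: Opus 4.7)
The plan is to recognize this statement as the classical Green's second identity (the symmetric version) adapted to a domain containing an internal discontinuity surface $\Sigma$, and to derive it by applying the already-stated Green's theorem \eqref{Eqn:Sensitivity_Greens_theorem} twice with the roles of $\beta$ and $\gamma$ swapped, then subtracting. The symmetry of the bilinear form $\mathrm{grad}[\gamma]\bullet\kappa\,\mathrm{grad}[\beta]$ (which relies on $\kappa$ being scalar-valued, or at least on pointwise symmetry of the conductivity tensor) is what causes the volume-gradient terms to cancel, leaving only boundary, jump, and divergence terms.

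Concretely, I would first invoke Green's theorem \eqref{Eqn:Sensitivity_Greens_theorem} with the scalar field $\alpha = \gamma(\mathbf{x})$ and the vector field $\mathbf{a} = \kappa(\mathbf{x})\,\mathrm{grad}[\beta(\mathbf{x})]$. This yields
\begin{align*}
\int_{\Omega\setminus\Sigma}\gamma\,\mathrm{div}\bigl[\kappa\,\mathrm{grad}[\beta]\bigr]\,\mathrm{d}\Omega
&= \int_{\partial\Omega}\gamma\,\kappa\,\mathrm{grad}[\beta]\bullet\widehat{\mathbf{n}}\,\mathrm{d}\Gamma
+ \int_{\Sigma}\bigl\llbracket\gamma\,\kappa\,\mathrm{grad}[\beta]\bigr\rrbracket\,\mathrm{d}\Gamma \\
&\qquad - \int_{\Omega\setminus\Sigma}\mathrm{grad}[\gamma]\bullet\kappa\,\mathrm{grad}[\beta]\,\mathrm{d}\Omega.
\end{align*}
Then I would apply the same Green's theorem a second time, now with $\alpha = \beta(\mathbf{x})$ and $\mathbf{a} = \kappa(\mathbf{x})\,\mathrm{grad}[\gamma(\mathbf{x})]$, to obtain the analogous identity with $\beta$ and $\gamma$ interchanged.

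The key step is to subtract the second identity from the first. Because $\kappa(\mathbf{x})$ is a scalar, the two volumetric gradient integrals satisfy
\begin{equation*}
\int_{\Omega\setminus\Sigma}\mathrm{grad}[\gamma]\bullet\kappa\,\mathrm{grad}[\beta]\,\mathrm{d}\Omega
= \int_{\Omega\setminus\Sigma}\mathrm{grad}[\beta]\bullet\kappa\,\mathrm{grad}[\gamma]\,\mathrm{d}\Omega,
\end{equation*}
so they cancel upon subtraction. Rearranging the surviving terms, transferring the $\beta\,\mathrm{div}[\kappa\,\mathrm{grad}[\gamma]]$ integral back to the right-hand side, yields exactly the claimed formula.

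I do not anticipate any serious obstacle. The only subtlety worth handling with care is the bookkeeping of the two jump integrals across $\Sigma$: Green's theorem produces $\llbracket \gamma\,\kappa\,\mathrm{grad}[\beta]\rrbracket$ and $\llbracket \beta\,\kappa\,\mathrm{grad}[\gamma]\rrbracket$ with the sign conventions adopted in \eqref{Eqn:Sensitivity_jump_operator_vector}, and one should verify that these are the surface terms appearing in the statement (with the correct signs after the subtraction). The smoothness of $\beta$ and $\gamma$ on each side of $\Sigma$ ensures that the one-sided limits defining the jumps exist, and the piecewise-smoothness of $\partial\Omega$ legitimizes the boundary integrals. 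If one wished to weaken smoothness, a density argument in $H^2$ on each component of $\Omega\setminus\Sigma$ would suffice, but that is not required by the stated hypothesis.
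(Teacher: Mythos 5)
Your proof is correct and follows essentially the same route as the paper: both apply Green's theorem \eqref{Eqn:Sensitivity_Greens_theorem} once with $\big(\alpha,\mathbf{a}\big)=\big(\gamma,\kappa\,\mathrm{grad}[\beta]\big)$ and once with $\big(\beta,\kappa\,\mathrm{grad}[\gamma]\big)$, use the symmetry $\mathrm{grad}[\gamma]\bullet\kappa\,\mathrm{grad}[\beta]=\mathrm{grad}[\beta]\bullet\kappa\,\mathrm{grad}[\gamma]$ to eliminate the volumetric gradient term, and collect the boundary and jump integrals---your subtraction of the two identities is just a reorganization of the paper's substitution of its second identity into its first. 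Your attention to the jump-operator sign conventions of \eqref{Eqn:Sensitivity_jump_operator_vector} is well placed, and incidentally your version avoids a typographical slip in the paper's intermediate step \eqref{Eqn:Sensitivity_App_Prop1_Step2}, whose last term should read $-\int_{\Omega\setminus\Sigma}\beta(\mathbf{x})\,\mathrm{div}\big[\kappa(\mathbf{x})\,\mathrm{grad}[\gamma(\mathbf{x})]\big]\,\mathrm{d}\Omega$ rather than the gradient--gradient integral printed there.
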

\begin{proof} 
    Taking $\alpha(\mathbf{x}) = \gamma(\mathbf{x})$ and $\mathbf{a}(\mathbf{x}) = \kappa(\mathbf{x}) \, \mathrm{grad}[\beta(\mathbf{x})]$ in  Green's theorem \eqref{Eqn:Sensitivity_Greens_theorem}, we write:
    \begin{align}
        \label{Eqn:Sensitivity_App_Prop1_Step1}
        \int_{\Omega \setminus \Sigma} 
        \gamma(\mathbf{x}) \,  \mathrm{div}\big[\kappa(\mathbf{x}) \,  \mathrm{grad}[\beta(\mathbf{x})]\big] \, \mathrm{d} \Omega 
        &= \int_{\partial \Omega} 
        \gamma(\mathbf{x}) \, \kappa(\mathbf{x}) \, \mathrm{grad}[\beta(\mathbf{x})]
        \bullet \widehat{\mathbf{n}}(\mathbf{x}) 
        \, \mathrm{d} \Gamma 
        \notag \\
        &\qquad + \int_{\Sigma} 
        \big\llbracket 
        \gamma(\mathbf{x}) \, \kappa(\mathbf{x}) \, \mathrm{grad}[\beta(\mathbf{x})]
        \big\rrbracket
        \, \mathrm{d} \Gamma
        \notag \\
        &\qquad - \int_{\Omega \setminus \Sigma} 
        \mathrm{grad}[\gamma(\mathbf{x})] 
        \bullet \kappa(\mathbf{x}) \, \mathrm{grad}[\beta(\mathbf{x})]
        \, \mathrm{d} \Omega 
    \end{align}
    Using the commutative property of the dot product and invoking Green's theorem (with $\alpha(\mathbf{x}) = \beta(\mathbf{x})$ and $\mathbf{a}(\mathbf{x}) = \kappa(\mathbf{x}) \, \mathrm{grad}[\gamma(\mathbf{x})]$), the last integral (defined over $\Omega \setminus \Sigma$) in the above equation is rewritten as follows: 
    \begin{align}
        \label{Eqn:Sensitivity_App_Prop1_Step2}
        \int_{\Omega \setminus \Sigma} 
        \mathrm{grad}[\gamma(\mathbf{x})] 
        \bullet \kappa(\mathbf{x}) \, \mathrm{grad}[\beta(\mathbf{x})]
        \, \mathrm{d} \Omega 
        &= \int_{\Omega \setminus \Sigma} 
        \mathrm{grad}[\beta(\mathbf{x})] 
        \bullet \kappa(\mathbf{x}) \, \mathrm{grad}[\gamma(\mathbf{x})]
        \, \mathrm{d} \Omega \notag \\ 
        &= \int_{\partial \Omega} 
        \beta(\mathbf{x}) \, \kappa(\mathbf{x}) \, \mathrm{grad}[\gamma(\mathbf{x})]
        \bullet \widehat{\mathbf{n}}(\mathbf{x}) 
        \, \mathrm{d} \Gamma 
        \notag \\
        &\qquad + \int_{\Sigma} 
        \big\llbracket 
        \beta(\mathbf{x}) \, \kappa(\mathbf{x}) \, \mathrm{grad}[\gamma(\mathbf{x})]
        \big\rrbracket
        \, \mathrm{d} \Gamma
        \notag \\
        &\qquad - \int_{\Omega \setminus \Sigma} 
        \mathrm{grad}[\beta(\mathbf{x})] 
        \bullet \kappa(\mathbf{x}) \, \mathrm{grad}[\gamma(\mathbf{x})]
        \, \mathrm{d} \Omega 
    \end{align}
    By subtracting Eq.~ \eqref{Eqn:Sensitivity_App_Prop1_Step2} from Eq.~\eqref{Eqn:Sensitivity_App_Prop1_Step1}, we get the desired result:
    \begin{align}
        \label{Eqn:Sensitivity_App_Prop1_Step3}
        \int_{\Omega \setminus \Sigma} 
        \gamma(\mathbf{x}) \,  \mathrm{div}\big[\kappa(\mathbf{x}) \,  \mathrm{grad}[\beta(\mathbf{x})]\big] \, \mathrm{d} \Omega 
        &= \int_{\partial \Omega} 
        \gamma(\mathbf{x}) \, \kappa(\mathbf{x}) \, \mathrm{grad}[\beta(\mathbf{x})]
        \bullet \widehat{\mathbf{n}}(\mathbf{x}) 
        \, \mathrm{d} \Gamma 
        \notag \\
        &\qquad -
        \int_{\partial \Omega} 
        \beta(\mathbf{x}) \, \kappa(\mathbf{x}) \, \mathrm{grad}[\gamma(\mathbf{x})]
        \bullet \widehat{\mathbf{n}}(\mathbf{x}) 
        \, \mathrm{d} \Gamma 
        \notag \\
        &\qquad + \int_{\Sigma} 
        \big\llbracket 
        \gamma(\mathbf{x}) 
         \, 
        \kappa(\mathbf{x}) 
         \, \mathrm{grad}[\beta(\mathbf{x})]
        \big\rrbracket
        \, \mathrm{d} \Gamma \notag \\
        &\qquad - \int_{\Sigma} 
        \big\llbracket 
        \beta(\mathbf{x}) 
         \, 
        \kappa(\mathbf{x}) 
         \, \mathrm{grad}[\gamma(\mathbf{x})]
        \big\rrbracket
        \, \mathrm{d} \Gamma
        \notag \\
        &\qquad + \int_{\Omega \setminus \Sigma} 
        \beta(\mathbf{x}) \, 
        \mathrm{div}\big[ \kappa(\mathbf{x}) \, \mathrm{grad}[\gamma(\mathbf{x})] \big]
        \, \mathrm{d} \Omega  
    \end{align}
\end{proof}

\section{Derivation of the adjoint state problem}
\label{App:Sensitivity_Derivation_adjoint_problem}

To make the presentation concise in the main text, several intermediate steps were skipped in arriving at Eq.~\eqref{Eqn:Sensitivity_Expression_chi_collected_terms} from Eq.~\eqref{Eqn:Sensitivity_Expression_chi_original_terms}. Below we provide the missing details. 
We start with  Eq.~\eqref{Eqn:Sensitivity_Expression_chi_original_terms} and record that a superscript $\#$ is the Fr\'echet derivative with respect to $\chi$. Also, we note that the spatial derivatives (i.e., divergence and gradient operators) commute with the Fr\'echet derivative with respect to $\chi$. Since $\vartheta_{\mathrm{amb}}$, $f_0$, $d$, $h_T$, and $\kappa(\mathbf{x})$ do not depend on $\chi$, we rewrite Eq.~\eqref{Eqn:Sensitivity_Expression_chi_original_terms} as follows: 
\begin{align}
    \label{Eqn:Sensitivity_App_chi_Step1}
    D \Phi[\chi] 
    &= \int_{\Omega} \vartheta^{\#}(\mathbf{x};\chi) \, \mathrm{d} \Omega \notag \\
    & \quad \quad \quad +\frac{1}{f_0} \int_{\Omega \setminus \Sigma}
    \Big(\mu(\mathbf{x}) - \vartheta_{\mathrm{amb}}\Big) \,  \Big(d \, \mathrm{div}\big[\kappa(\mathbf{x}) \, \mathrm{grad}\big[\vartheta^{\#}(\mathbf{x};\chi)\big]\big] - h_{T} \, \vartheta^{\#}(\mathbf{x};\chi)\Big) 
    \, \mathrm{d} \Omega \notag \\
    &\quad \quad \quad -\frac{1}{f_0} \int_{\Sigma}  \Big\llangle\mu(\mathbf{x}) -  \vartheta_{\mathrm{amb}}\Big\rrangle \, \Big(\big\llbracket d \, 
    \kappa(\mathbf{x}) \,  \mathrm{grad}[\vartheta^{\#}(\mathbf{x};\chi)]\big\rrbracket + \chi \, \big\llangle  \mathrm{grad}[\vartheta^{\#}(\mathbf{x};\chi)]\bullet \widehat{\mathbf{t}}(\mathbf{x}) \big\rrangle 
    \notag\\
    &\hspace{3in}  + \big\llangle \mathrm{grad}[\vartheta(\mathbf{x};\chi)]\bullet \widehat{\mathbf{t}}(\mathbf{x}) \big\rrangle
    \Big) \, \mathrm{d} \Gamma \notag \\
    &\quad \quad \quad +\frac{1}{f_0} \int_{\Sigma} \Big\llangle d \, \kappa(\mathbf{x}) \mathrm{grad}[\mu(\mathbf{x})]\Big\rrangle
    \bullet 
    \big\llbracket\vartheta^{\#}(\mathbf{x};\chi)\big\rrbracket \, \mathrm{d} \Gamma \notag \\
    &\quad \quad \quad +\frac{1}{f_0} \int_{\partial \Omega} \Big(\mu(\mathbf{x}) -  \vartheta_{\mathrm{amb}}\Big) \, \Big(-d \, \kappa(\mathbf{x}) \,  \mathrm{grad}\big[\vartheta^{\#}(\mathbf{x};\chi)\big] \bullet \widehat{\mathbf{n}}(\mathbf{x})\Big) 
    \, \mathrm{d} \Gamma \notag \\
    &\quad \quad \quad -\frac{\chi}{f_0}
    \Big(\big\llangle\mu(\mathbf{x}) \big\rrangle - \vartheta_{\mathrm{amb}}\Big)
    \,  
    \big\llangle \vartheta^{\#}(\mathbf{x};\chi) 
    \big\rrangle \, \Big|_{s = 0 \;  (\mathrm{inlet})}
\end{align}
The central aim for the rest of the derivation is to isolate $\vartheta^{\#}(\mathbf{x};\kappa(\mathbf{x}))$ in each term of Eq.~\eqref{Eqn:Sensitivity_App_chi_Step1}.

For convenience, we denote the second term by $\mathcal{I}_2$: 
\begin{align}
    \label{Eqn:Sensitivity_App_chi_Step2}
    \mathcal{I}_{2}
    &= \frac{1}{f_0} \int_{\Omega \setminus \Sigma}
    \Big(\mu(\mathbf{x}) - \vartheta_{\mathrm{amb}}\Big) \,  \Big(d \, \mathrm{div}\big[\kappa(\mathbf{x})\mathrm{grad}[\vartheta^{\#}(\mathbf{x};\chi)]\big] 
    - h_{T} \, \vartheta^{\#}(\mathbf{x};\chi)\Big) \, \mathrm{d} \Omega \notag \\
    &= - \frac{1}{f_0} \int_{\Omega \setminus \Sigma}
    \vartheta^{\#}(\mathbf{x};\chi)
    \, h_T \, 
    \Big(\mu(\mathbf{x}) - \vartheta_{\mathrm{amb}}\Big) 
    \, \mathrm{d} \Omega 
    \notag \\
    &\qquad \qquad + \frac{1}{f_0} \int_{\Omega \setminus \Sigma}
    \Big(\mu(\mathbf{x}) - \vartheta_{\mathrm{amb}}\Big) \, 
    d \, \mathrm{div}\big[\kappa(\mathbf{x})\mathrm{grad}[\vartheta^{\#}(\mathbf{x};\chi)]\big] \, \mathrm{d} \Omega 
\end{align}
We now rewrite the integral $\mathcal{I}_2$ by moving the spatial derivatives on $\vartheta(\mathbf{x};\chi)$ to $\mu(\mathbf{x})$. By invoking Proposition \ref{Prop:Sensitivity_Weighted_theorem} on the second integral in Eq.~\eqref{Eqn:Sensitivity_App_chi_Step2}, we get the following: 
\begin{align}
    \label{Eqn:Sensitivity_App_Step3}
    \mathcal{I}_2 
    &= - \frac{1}{f_0} \int_{\Omega \setminus \Sigma}
    \vartheta^{\#}(\mathbf{x};\chi)
    \, h_T \, 
    \Big(\mu(\mathbf{x}) - \vartheta_{\mathrm{amb}}\Big) 
    \, \mathrm{d} \Omega \notag \\
    &\qquad +
    \frac{1}{f_0} \int_{\Omega \setminus \Sigma}
    \vartheta^{\#}(\mathbf{x};\chi) \, 
    \Big(
    d \, \mathrm{div}\big[\kappa(\mathbf{x}) \, 
    \mathrm{grad}[\mu(\mathbf{x})] \big] \Big) \, \mathrm{d} \Omega 
    \notag \\
    &\qquad + 
    \frac{1}{f_0} \int_{\partial \Omega}
    \Big(\mu(\mathbf{x}) - \vartheta_{\mathrm{amb}}\Big) \, d \, \kappa(\mathbf{x})\mathrm{grad}[\vartheta^{\#}(\mathbf{x};\chi)] \bullet \widehat{\mathbf{n}}(\mathbf{x}) \, \mathrm{d} \Gamma
    \notag \\
    &\qquad +
    \frac{1}{f_0} \int_{\Sigma}
    \Big\llbracket
    \left(\mu(\mathbf{x}) - \vartheta_{\mathrm{amb}} \right) 
    \bullet 
    d \, \kappa(\mathbf{x})\mathrm{grad}[\vartheta^{\#}(\mathbf{x};\chi)] 
    \Big\rrbracket \, \mathrm{d} \Gamma 
    \notag \\
    &\qquad - 
    \frac{1}{f_0} \int_{\partial \Omega}
    \vartheta^{\#}(\mathbf{x};\chi) \, 
    \Big(
    d \, \kappa(\mathbf{x}) \, 
    \mathrm{grad}[\mu(\mathbf{x})] \bullet
    \widehat{\mathbf{n}}(\mathbf{x}) \Big) \, \mathrm{d} \Gamma 
    \notag \\
    &\qquad 
    - \frac{1}{f_0} \int_{\Sigma}
    \Big\llbracket 
    \vartheta^{\#}(\mathbf{x};\chi) 
    \bullet 
    d \, \kappa(\mathbf{x}) \, 
    \mathrm{grad}[\mu(\mathbf{x})] 
    \Big \rrbracket \, \mathrm{d} \Gamma 
\end{align}

We now substitute the above expression into Eq.~\eqref{Eqn:Sensitivity_App_chi_Step1} and group the resulting terms into three categories for further simplification. We thus write:
\begin{align}
    \label{Eqn:Sensitivity_App_DPhi}
    D\Phi[\chi] 
    = \mathcal{J}_{1}
    + \mathcal{J}_{2}
    + \mathcal{J}_{3}
\end{align}
where $\mathcal{J}_1$ contains all the terms comprising integrals over $\Omega\setminus\Sigma$, 
$\mathcal{J}_{2}$ over $\partial \Omega$, and $\mathcal{J}_3$ consists of all the terms pertaining to the vasculature (i.e., terms containing integrals over $\Sigma$, and terms defined at the inlet or outlet of $\Sigma$).

The expression for $\mathcal{J}_{1}$ reads: 
\begin{align}
    \label{Eqn:Sensitivity_App_collecting_J1}
    \mathcal{J}_{1} 
    &= \int_{\Omega \setminus \Sigma} 
    \vartheta^{\#}(\mathbf{x};\chi) \, 
    \mathrm{d} \Omega 
    - \frac{1}{f_0} \int_{\Omega \setminus \Sigma}
    \vartheta^{\#}(\mathbf{x};\chi)
    \, h_T \, 
    \Big(\mu(\mathbf{x}) - \vartheta_{\mathrm{amb}}\Big) 
    \, \mathrm{d} \Omega \notag \\
    &\qquad \qquad \qquad +
    \frac{1}{f_0} \int_{\Omega \setminus \Sigma}
    \vartheta^{\#}(\mathbf{x};\chi) \, 
    \Big(
    d \, \mathrm{div}\big[\kappa(\mathbf{x}) \, 
    \mathrm{grad}[\mu(\mathbf{x})] \big] \Big) \, \mathrm{d} \Omega 
    \notag \\
    &= \frac{1}{f_0} \int_{\Omega \setminus \Sigma}
    \vartheta^{\#}(\mathbf{x};\chi) \, 
    \Big(
    d \, \mathrm{div}\big[\kappa(\mathbf{x}) \, 
    \mathrm{grad}[\mu(\mathbf{x})] \big] + f_0 - h_T \, 
    \big(\mu(\mathbf{x}) - \vartheta_{\mathrm{amb}}\big) \Big) \, \mathrm{d} \Omega 
\end{align}
The expression for $\mathcal{J}_{2}$ reads: 
\begin{align}
    \label{Eqn:Sensitivity_App_collecting_J2}
    \mathcal{J}_{2} 
    &= \frac{1}{f_0} \int_{\partial \Omega}
    \Big(\mu(\mathbf{x}) - \vartheta_{\mathrm{amb}}\Big) \, d \, \kappa(\mathbf{x})\mathrm{grad}[\vartheta^{\#}(\mathbf{x};\chi)] \bullet \widehat{\mathbf{n}}(\mathbf{x}) \, \mathrm{d} \Gamma
    \notag \\
    &\qquad \qquad \qquad -\frac{1}{f_0} \int_{\partial \Omega}
    \vartheta^{\#}(\mathbf{x};\chi) \, 
    \Big(
    d \, \kappa(\mathbf{x}) \, 
    \mathrm{grad}[\mu(\mathbf{x})] \bullet
    \widehat{\mathbf{n}}(\mathbf{x}) \Big) \, \mathrm{d} \Gamma 
    \notag \\ 
    &\qquad \qquad \qquad +
    \frac{1}{f_0} \int_{\partial \Omega} \Big(\mu(\mathbf{x}) -  \vartheta_{\mathrm{amb}}\Big) \, \Big(-d \, \kappa(\mathbf{x}) \mathrm{grad}\big[\vartheta^{\#}(\mathbf{x};\chi)\big] \bullet \widehat{\mathbf{n}}(\mathbf{x})\Big) \, \mathrm{d} \Gamma
    \notag \\
    &=\frac{1}{f_0} \int_{\partial \Omega}
    \vartheta^{\#}(\mathbf{x};\chi) \, 
    \Big(-d \, \kappa(\mathbf{x}) \, 
    \mathrm{grad}[\mu(\mathbf{x})] \bullet
    \widehat{\mathbf{n}}(\mathbf{x}) \Big) \, \mathrm{d} \Gamma 
\end{align}
The expression for $\mathcal{J}_{3}$ reads: 
\begin{align}
    \mathcal{J}_3 &=
    -\frac{1}{f_0} \int_{\Sigma}  \big\llangle\mu(\mathbf{x}) -  \vartheta_{\mathrm{amb}}\big\rrangle \, \big\llangle\mathrm{grad}[\vartheta(\mathbf{x};\chi)] 
    \bullet \widehat{\mathbf{t}}(\mathbf{x})
    \big\rrangle \, \mathrm{d} \Gamma 
    \notag\\
    &\qquad +\frac{1}{f_0} \int_{\Sigma}
    \left( 
    \Big\llbracket
    \left(\mu(\mathbf{x}) - \vartheta_{\mathrm{amb}} \right) 
    \bullet 
    d \, \kappa(\mathbf{x})\mathrm{grad}[\vartheta^{\#}(\mathbf{x};\chi)] 
    \Big\rrbracket
    -\Big\llangle\mu(\mathbf{x}) -  \vartheta_{\mathrm{amb}}\Big\rrangle \, 
    \Big\llbracket d \, \kappa(\mathbf{x}) \, \mathrm{grad}\big[\vartheta^{\#}(\mathbf{x};\chi)\big]\Big\rrbracket \right) \mathrm{d} \Gamma 
    \notag \\
     &\qquad 
    - \frac{1}{f_0} \int_{\Sigma}
    \left(\Big\llbracket 
    \vartheta^{\#}(\mathbf{x};\chi) 
    \bullet 
    d \, \kappa(\mathbf{x}) \, 
    \mathrm{grad}[\mu(\mathbf{x})] 
    \Big \rrbracket 
    - \Big\llangle d \, \kappa(\mathbf{x}) \mathrm{grad}[\mu(\mathbf{x})]\Big\rrangle
    \bullet 
    \big\llbracket\vartheta^{\#}(\mathbf{x};\chi)\big\rrbracket 
    \right) \, \mathrm{d} \Gamma \notag \\
    &\qquad -\frac{1}{f_0} \int_{\Sigma}  \big\llangle\mu(\mathbf{x}) -  \vartheta_{\mathrm{amb}}\big\rrangle \, 
    \big\llangle \chi \,   \mathrm{grad}\big[\vartheta^{\#}(\mathbf{x};\chi)\big]\bullet \widehat{\mathbf{t}}(\mathbf{x}) 
    \big\rrangle 
    \, \mathrm{d} \Gamma 
    \notag \\
    &\qquad -\frac{\chi}{f_0} \Big(\big\llangle\mu(\mathbf{x}) 
    \big \rrangle - \vartheta_{\mathrm{amb}}\Big) \,  \big\llangle \vartheta^{\#}(\mathbf{x};\chi) 
    \big \rrangle 
    \, \Big|_{s = 0 \;  (\mathrm{inlet})}
\end{align}
We simplify further by invoking the jump identities \eqref{Eqn:Sensitivities_avg_jump_identities} the second and third terms of the above equation: 
\begin{align}
    \mathcal{J}_3 &=
    -\frac{1}{f_0} \int_{\Sigma}  \big\llangle\mu(\mathbf{x}) -  \vartheta_{\mathrm{amb}}\big\rrangle \, 
    \big\llangle 
    \mathrm{grad}[\vartheta(\mathbf{x};\chi)] 
    \bullet \widehat{\mathbf{t}}(\mathbf{x})
    \big \rrangle \, \mathrm{d} \Gamma 
    \notag\\
    &\qquad +\frac{1}{f_0} \int_{\Sigma}
    \Big\llangle d \, \kappa(\mathbf{x}) \, \mathrm{grad}\big[\vartheta^{\#}(\mathbf{x};\chi)\big]
    \Big\rrangle
    \bullet 
    \Big\llbracket\mu(\mathbf{x}) -  \vartheta_{\mathrm{amb}}
    \Big\rrbracket 
    \, \mathrm{d} \Gamma 
    \notag \\
    &\qquad 
    - \frac{1}{f_0} \int_{\Sigma}
    \big\llangle\vartheta^{\#}(\mathbf{x};\chi)\big\rrangle
    \Big\llbracket d \, \kappa(\mathbf{x}) \mathrm{grad}[\mu(\mathbf{x})]\Big\rrbracket
    \, \mathrm{d} \Gamma \notag \\
    &\qquad -\frac{1}{f_0} \int_{\Sigma}  \big\llangle\mu(\mathbf{x}) -  \vartheta_{\mathrm{amb}}\big\rrangle \, 
    \big\llangle \chi \,  \mathrm{grad}\big[\vartheta^{\#}(\mathbf{x};\chi)\big]\bullet \widehat{\mathbf{t}}(\mathbf{x}) 
    \big\rrangle 
    \, \mathrm{d} \Gamma 
    \notag \\
    &\qquad -\frac{\chi}{f_0} \Big(\big\llangle\mu(\mathbf{x})\big\rrangle - \vartheta_{\mathrm{amb}}\Big) \, \big\llangle\vartheta^{\#}(\mathbf{x};\chi)\big\rrangle  
    \, \Big|_{s = 0 \;  (\mathrm{inlet})}
\end{align}

Noting that $\chi$ is independent of $\mathbf{x}$ and invoking Green's identity on the integral (in fact, it will be integration by parts, as the integral is in one spatial variable, $s$), the penultimate term can be rewritten as follows: 
\begin{align}
    &\frac{1}{f_0} \int_{\Sigma}  \big\llangle\mu(\mathbf{x}) -  \vartheta_{\mathrm{amb}}\big\rrangle \, 
    \big\llangle \chi \,  \mathrm{grad}\big[\vartheta^{\#}(\mathbf{x};\chi)\big]\bullet \widehat{\mathbf{t}}(\mathbf{x}) 
    \big\rrangle 
    \, \mathrm{d} \Gamma 
    \notag \\ 
    & \qquad \qquad = \frac{1}{f_0} \int_{0}^{1}  \big\llangle\mu(\mathbf{x}) -  \vartheta_{\mathrm{amb}}\big\rrangle \, 
    \big\llangle \chi \,  \frac{d}{ds}\vartheta^{\#}(\mathbf{x};\chi)
    \big\rrangle 
    \, \mathrm{d}s
    \notag \\ 
    & \qquad \qquad = \frac{\chi}{f_0} 
    \Big( \big\llangle\mu(\mathbf{x}) -  \vartheta_{\mathrm{amb}}\big\rrangle \, 
    \big\llangle  \vartheta^{\#}(\mathbf{x};\chi)
    \big\rrangle 
    \Big) \, \Big\vert_{s=0}^{s=1}
    - \frac{1}{f_0} \int_{0}^{1}  \chi \frac{d}{ds} \big\llangle \mu(\mathbf{x}) -  \vartheta_{\mathrm{amb}}\big\rrangle \, 
    \big\llangle \vartheta^{\#}(\mathbf{x};\chi)
    \big\rrangle 
    \, \mathrm{d}s
    \notag \\ 
    & \qquad \qquad = \frac{\chi}{f_0} 
    \Big( \big\llangle\mu(\mathbf{x}) \big \rrangle -  \vartheta_{\mathrm{amb}}\Big) \, 
    \big\llangle  \vartheta^{\#}(\mathbf{x};\chi)
    \big\rrangle 
    \, \Big\vert_{s=1 \; \mathrm{(outlet)}}
    - \frac{\chi}{f_0} 
    \Big( \big\llangle\mu(\mathbf{x}) 
    \big \rrangle - 
    \vartheta_{\mathrm{amb}} \, 
    \Big) 
    \big\llangle \vartheta^{\#}(\mathbf{x};\chi)
    \big\rrangle 
    \, \Big\vert_{s=0 \; \mathrm{(inlet)}}
    \notag \\
    &\qquad \qquad \qquad - \frac{1}{f_0} \int_{\Sigma}  
    \big\llangle \vartheta^{\#}(\mathbf{x};\chi)
    \big\rrangle 
    \big \llangle 
    \chi \, \mathrm{grad}[\mu(\mathbf{x})] 
    \bullet \widehat{\mathbf{t}}(\mathbf{x}) \big\rrangle 
    \, \mathrm{d}\Gamma
\end{align}
Using the above two equations, $\mathcal{J}_3$ will be written as:
\begin{align}
    \label{Eqn:Sensitivity_App_J3_terms_final}
    \mathcal{J}_3 &=
    -\frac{1}{f_0} \int_{\Sigma}  \big\llangle\mu(\mathbf{x}) -  \vartheta_{\mathrm{amb}}\big\rrangle \, 
    \big\llangle 
    \mathrm{grad}[\vartheta(\mathbf{x};\chi)] 
    \bullet \widehat{\mathbf{t}}(\mathbf{x})
    \big \rrangle \, \mathrm{d} \Gamma 
    \notag\\
    &\qquad +\frac{1}{f_0} \int_{\Sigma}
    \Big\llangle d \, \kappa(\mathbf{x}) \, \mathrm{grad}\big[\vartheta^{\#}(\mathbf{x};\chi)\big]
    \Big\rrangle
    \bullet 
    \Big\llbracket\mu(\mathbf{x}) -  \vartheta_{\mathrm{amb}}
    \Big\rrbracket 
    \, \mathrm{d} \Gamma 
    \notag \\
    &\qquad 
    - \frac{1}{f_0} \int_{\Sigma}
    \big\llangle\vartheta^{\#}(\mathbf{x};\chi)\big\rrangle
    \Big(\big\llbracket d \, \kappa(\mathbf{x}) \mathrm{grad}[\mu(\mathbf{x})]\big\rrbracket
    - \big\llangle \chi \,  \mathrm{grad}\big[\vartheta^{\#}(\mathbf{x};\chi)\big]\bullet \widehat{\mathbf{t}}(\mathbf{x}) 
    \big\rrangle 
    \Big) \, \mathrm{d} \Gamma 
    \notag \\
    &\qquad -\frac{\chi}{f_0}
    \big\llangle\vartheta^{\#}(\mathbf{x};\chi) \big\rrangle \, 
    \Big(\big\llangle\mu(\mathbf{x})\big\rrangle - \vartheta_{\mathrm{amb}}\Big) 
    \, \Big|_{s = 1 \;  (\mathrm{outlet})}
\end{align}

Finally, by substituting the terms for $\mathcal{J}_{1}$, $\mathcal{J}_{2}$ and $\mathcal{J}_{3}$---given by Eqs.~\eqref{Eqn:Sensitivity_App_collecting_J1}, \eqref{Eqn:Sensitivity_App_collecting_J2} and \eqref{Eqn:Sensitivity_App_J3_terms_final}---into Eq.~\eqref{Eqn:Sensitivity_App_DPhi}, we get
\begin{align}
    D\Phi[\chi] 
    &= -\frac{1}{f_0} \int_{\Sigma}  \big\llangle\mu(\mathbf{x}) -  \vartheta_{\mathrm{amb}}\big\rrangle \, 
    \big\llangle 
    \mathrm{grad}[\vartheta(\mathbf{x};\chi)] 
    \bullet \widehat{\mathbf{t}}(\mathbf{x})
    \big \rrangle \, \mathrm{d} \Gamma 
    \notag\\
    &\qquad + \frac{1}{f_0} \int_{\Omega \setminus \Sigma}
    \vartheta^{\#}(\mathbf{x};\chi) \, 
    \Big\{
    d \, \mathrm{div}\big[\kappa(\mathbf{x}) \, 
    \mathrm{grad}[\mu(\mathbf{x})] \big] + f_0 - h_T \, 
    \big(\mu(\mathbf{x}) - \vartheta_{\mathrm{amb}}\big) \Big\} \, \mathrm{d} \Omega 
    \notag \\
    &\qquad - 
    \frac{1}{f_0} \int_{\Sigma}
    \big\llangle\vartheta^{\#}(\mathbf{x};\chi)\big\rrangle
    \Big(\big\llbracket d \, \kappa(\mathbf{x}) \mathrm{grad}[\mu(\mathbf{x})]\big\rrbracket
    - \chi \, \big\llangle  \mathrm{grad}\big[\vartheta^{\#}(\mathbf{x};\chi)\big]\bullet \widehat{\mathbf{t}}(\mathbf{x}) 
    \big\rrangle 
    \Big) \, \mathrm{d} \Gamma 
    \notag \\
    &\qquad +\frac{1}{f_0} \int_{\Sigma}
    \Big\llangle d \, \kappa(\mathbf{x}) \, \mathrm{grad}\big[\vartheta^{\#}(\mathbf{x};\chi)\big]
    \Big\rrangle
    \bullet 
    \Big\{
    \Big\llbracket\mu(\mathbf{x}) \Big\rrbracket -  \vartheta_{\mathrm{amb}}
    \Big\} 
    \, \mathrm{d} \Gamma 
    \notag \\
    &\qquad +\frac{1}{f_0} \int_{\partial \Omega}
    \vartheta^{\#}(\mathbf{x};\chi) \, 
    \Big\{-d \, \kappa(\mathbf{x}) \, 
    \mathrm{grad}[\mu(\mathbf{x})] \bullet
    \widehat{\mathbf{n}}(\mathbf{x}) \Big\} \, \mathrm{d} \Gamma 
    \notag \\ 
    &\qquad -\frac{\chi}{f_0}
    \big\llangle\vartheta^{\#}(\mathbf{x};\chi) \big\rrangle \, 
    \Big\{\big\llangle\mu(\mathbf{x})\big\rrangle - \vartheta_{\mathrm{amb}}\Big\} 
    \, \Big|_{s = 1 \;  (\mathrm{outlet})}
\end{align}
which is same as Eq.~\eqref{Eqn:Sensitivity_Expression_chi_collected_terms}.

\section*{DATA AVAILABILITY}
The data that support the findings of this study are available from the corresponding author upon request.

\bibliographystyle{plainnat}
\bibliography{Master_References}
\end{document}